\documentclass[12pt]{amsart}
\usepackage[utf8]{inputenc}
\usepackage[margin=2.5cm]{geometry}
\usepackage{tikz-cd} 
\tikzset{
    labl/.style={anchor=north, rotate=90, inner sep=.5mm}
}
\usepackage{cancel}
\usepackage[english]{babel}
\usepackage{stmaryrd}
\usepackage{latexsym,amsmath,amsfonts,amscd,amssymb}
\usepackage{fancyhdr}
\usepackage{hyperref}
\usepackage{xypic}

\usepackage{array}   
\usepackage[mathscr]{eucal} 
\usepackage{mathrsfs}
\usepackage{graphicx}
\usepackage{calligra}
\usepackage[T1]{fontenc}
\setlength{\parskip}{0.3\baselineskip}
\setlength{\extrarowheight}{5pt}
\setlength{\oddsidemargin}{5pt}
\setlength{\evensidemargin}{5pt}
\setlength{\textwidth}{460pt}
\setlength{\textheight}{650pt}
\setlength{\topmargin}{-20pt}

\DeclareFontFamily{OT1}{pzc}{}
\DeclareFontShape{OT1}{pzc}{m}{it}{<-> s * [1.10] pzcmi7t}{}
\DeclareMathAlphabet{\mathpzc}{OT1}{pzc}{m}{it}

\DeclareFontFamily{OT1}{rsfs}{}
 \DeclareFontShape{OT1}{rsfs}{n}{it}{<->rsfs10}{}
 \DeclareMathAlphabet{\curly}{OT1}{rsfs}{n}{it}

\theoremstyle{plain}  
\newtheorem{theorem}{Theorem}[section]

\newtheorem*{theorem*}{Theorem}
\newtheorem{corollary}[theorem]{Corollary}
\newtheorem{lemma}[theorem]{Lemma}
\newtheorem{proposition}[theorem]{Proposition}

\theoremstyle{definition}
\newtheorem{definition}[theorem]{Definition}

\theoremstyle{remark}

\newtheorem{remark}[theorem]{Remark}
\newtheorem*{remark*}{Remark}

\newtheorem*{claim*}{Claim}

\newcommand{\forevery}{\;\text{for}\;\text{every}\;}
\newcommand{\andd}{\quad\text{and}\quad}

\newcommand{\suhthat}{\mid}

\renewcommand{\le}{\leqslant}

\renewcommand{\ge}{\geqslant}

\newcommand{\R}{\mathbb{R}}

\newcommand{\Z}{\mathbb{Z}}
\newcommand{\C}{\mathbb{C}}

\newcommand{\PPP}{\curly{P}}

\newcommand{\cC}{\mathcal{C}}

\newcommand{\cM}{\mathcal{M}}
\newcommand{\wcM}{\widetilde{\mathcal{M}}}
\newcommand{\mdl}{\mathcal{M}}
\newcommand{\isoc}{H^1}

\newcommand{\calR}{\mathcal{R}}
\newcommand{\wcalR}{\widetilde{\mathcal{R}}}

\newcommand{\lie}{\mathfrak}

\newcommand{\PGL}{\mathrm{PGL}}
\newcommand{\slm}{\SL(n,\C)^{\theta}}
\newcommand{\nslm}{\SL(n,\C)_{\theta}}

\newcommand{\U}{\mathrm{U}}

\newcommand{\GL}{\mathrm{GL}}
\newcommand{\SL}{\mathrm{SL}}

\newcommand{\SO}{\mathrm{SO}}
\newcommand{\NSO}{\mathrm{NSO}}

\newcommand{\Sp}{\mathrm{Sp}}
\newcommand{\NSp}{\mathrm{NSp}}
\newcommand{\Spin}{\mathrm{Spin}}

\DeclareMathOperator{\ad}{ad}
\DeclareMathOperator{\Ad}{Ad}

\DeclareMathOperator{\Hom}{Hom}
\DeclareMathOperator{\End}{End}

\DeclareMathOperator{\id}{Id}
\DeclareMathOperator{\Id}{Id}

\DeclareMathOperator{\Aut}{Aut}
\DeclareMathOperator{\Int}{Int}
\DeclareMathOperator{\Out}{Out}

\DeclareMathOperator{\Nmd}{Nm_{\Delta}}
\DeclareMathOperator{\gal}{Gal}

\hyphenation{Higgs}


\renewcommand{\phi}{\varphi}


\newcommand{\gl}{\mathfrak{gl}}
\newcommand{\sll}{\mathfrak{sl}}
\newcommand{\lieg}{\mathfrak{g}}

\renewcommand{\phi}{\varphi}



\newcommand{\alg}{\alpha_{\gamma}}

\newcommand{\llambda}{\Gamma}
\newcommand{\ambda}{\gamma}

\newcommand{\oo}{\mathcal{O}}


\newcommand{\autg}{\Aut(G)}
\newcommand{\homm}[2]{\Hom\left(#1,#2\right)}
\newcommand{\outg}{\Out(G)}
\newcommand{\homgh}{\homm{\Gamma}{\autg}}

\newcommand{\intt}[1]{\Int_{#1}}
\newcommand{\gt}{G^{\theta}}
\newcommand{\gtt}{G^{\theta'}}
\newcommand{\gs}{G_{\theta}}
\newcommand{\pt}{p_{\theta}}

\newcommand{\tg}{\theta_{\gamma}}
\newcommand{\etag}{\eta_{\gamma}}
\newcommand{\ot}{\overline{\theta}}

\newcommand{\zg}{z_{\gamma}}

\newcommand{\cct}{c_{\theta}}
\newcommand{\ct}{c^{\theta}}
\newcommand{\qt}{q_{\theta}}
\newcommand{\qqt}{\qt}
\newcommand{\hg}{h_{\gamma}}
\newcommand{\oh}{\overline{h}}

\newcommand{\ohg}{\overline{h}_{\gamma}}

\newcommand{\fg}{f_{\gamma}}

\newcommand{\taug}{\tau_{\gamma}}
\newcommand{\mug}{\mu_{\gamma}}

\newcommand{\ag}{a_{\gamma}}
\newcommand{\sg}{s_{\gamma}}
\newcommand{\taut}{\tau^{\theta}}

\newcommand{\ctt}{\tilde c_{\theta}}
\newcommand{\gamt}{\Gamma_{\theta}}
\newcommand{\gamtt}{\widehat{\Gamma}_{\theta}}
\newcommand{\gamtz}{\widehat{\Gamma}^{\theta}}
\newcommand{\liegm}{\mathfrak{g}^{\theta}_{\mu}}
\newcommand{\liegt}{\mathfrak{g}_{\theta}}
\newcommand{\zk}{\lie z_{\lie k}}

\newcommand{\zt}{Z^1_{\theta}(\Gamma,Z)}

\DeclareMathOperator{\Fun}{Fun}
\newcommand{\fun}[2]{\Fun(#1,#2)}

\newcommand{\fung}{\fun{\Gamma}{G/Z}}

\newcommand{\ga}{G_{Y}}

\newcommand{\cent}{Z_{\Gamma}(\gam)}


\newcommand{\wt}{\widetilde{\theta}}

\newcommand{\wf}{\widetilde{f}}




\newcommand{\pa}{p_{Y}}
\newcommand{\xa}{Y}

\newcommand{\gam}{\Gamma_{Y}}

\newcommand{\xd}{X_{\Delta}}
\newcommand{\pg}{p_{\Gamma}}
\newcommand{\pd}{p_{\Delta}}



\newcommand{\rhog}{\rho_{\Gamma}}
\newcommand{\rhomu}{\rho_{\mu}}
\newcommand{\rhotm}{\rho_{\tilde\tau,\mu}}
\newcommand{\cdotv}[1]{\bullet \rhog(#1)}

\DeclareMathOperator{\irr}{irr}


\newcommand{\homtc}{\Hom_{\theta,\tau,c}(\wgame,\Aut(G))}
\newcommand{\outc}{\Hom_{\theta,\tau,c}(\wgame,\Out(G))}
\newcommand{\etc}{e_{\tau,c}}

\newcommand{\wgam}{\widehat{\Gamma}_{Y}}
\newcommand{\wgamm}{\widehat{\Gamma}}
\newcommand{\wga}{\widehat{\gamma}}
\newcommand{\wgame}{\wgamm_{\eta}}

\newcommand{\otau}{\overline{\tau}}
\newcommand{\zzgtw}{Z^2_{\tau}(\wgam,Z(\gt_0))}
\newcommand{\zzgt}{Z^2_{\tau}(\gamt,Z(\gt_0))}
\newcommand{\omu}{\overline{\mu}}

\newcommand{\wtg}{\widetilde{\theta}_{\gamma}}
\newcommand{\wtb}{\widetilde{\theta}_{\beta}^{-1}}

\newcommand{\wtaug}{\tilde{\tau}_{\gamma}}

\newcommand{\fb}{\eta_{\beta}}
\newcommand{\hb}{h_{\beta}}
\newcommand{\mub}{\mu_{\beta}}
\newcommand{\ttg}{\theta'_{\gamma}}

\newcommand{\halpha}{\langle\alpha\rangle}

\newcommand{\olambda}{\overline{\Lambda}}
\newcommand{\gtl}{\gt_{\Lambda}}
\newcommand{\gtll}{\gt_{\olambda}}


\newcommand{\lambdaa}{*(\gamma\lambda\gamma^{-1})}
\newcommand{\lambdaaa}{*\lambda\langle\alg,\lambda\rangle^{-1}}


\title[Finite group actions on Higgs bundle moduli spaces]{Finite group actions on Higgs bundle moduli spaces}
\author{Guillermo Barajas, Suratno Basu and Oscar García-Prada}
\date{\today}

\subjclass[2010]{Primary 14H60; Secondary 53C07, 58D19}

\keywords{Higgs bundle, moduli, finite group action, twisted
  equivariant bundle, covering}


\begin{document}

\begin{abstract}
Let $\cM(X,G)$ be the moduli space of $G$-Higgs bundles over a compact 
Riemann  surface $X$, where $G$ is a semisimple complex Lie group with centre $Z$. We describe the fixed points of  
the action of a finite group $\Gamma$ on $\cM(X,G)$, induced by  
holomorphic actions of $\Gamma$ on $X$ and $G$, a character of $\Gamma$ and a homomorphism from $\Gamma$ to the group of $Z$-bundles over $X$.
Two important ingredients in this study are provided by the theory of twisted $\Gamma$-equivariant bundles developed by Barajas--García-Prada--Gothen--Mundet i Riera, and the Prym--Narasimhan--Ramanan construction given by Barajas--García-Prada.
Via the non-abelian Hodge correspondence, our results provide a description of the fixed-point subvarieties of certain finite group actions on the $G$-character variety of the fundamental group of $X$.
\end{abstract}

\maketitle

\section{Introduction}

\subsection{Background and statement of the problem}\label{section-background}
\noindent The aim of this paper is to study fixed points of finite group actions on moduli spaces of $G$-Higgs bundles over a compact Riemann surface $X$. Here $G$ is a connected semisimple complex Lie group, but most of our results are valid for $G$ reductive. A $G$-Higgs bundle is a pair $(E,\phi)$ consisting of a principal $G$-bundle $E$ over $X$ and a global section $\phi$ --- the Higgs field --- of $E(\mathfrak{g})\otimes K_X$, where $E(\mathfrak{g})$ is the bundle associated to $E$
via the adjoint representation of $G$ in $\lieg$ and $K_X$ is the canonical bundle of $X$. There is a moduli space $\mdl(X,G)$ classifying isomorphism classes of polystable $G$-Higgs bundles --- see Section \ref{section-higgs-bundles}. When $G$ is a classical group, $G$-Higgs bundles correspond to vector bundles equipped with some extra structure and a compatible endomorphism twisted by $K_X$.


We consider several natural group actions on $\mdl(X,G)$. First, the group $\C^*$ acts on $\mdl(X,G)$ by rescaling the Higgs field. 
In this paper we are only concerned with actions of finite subgroups of $\C^*$. The simplest example, already considered by Hitchin \cite{hitchin1987} for $G=\SL(2,\C)$, and in \cite{PR} for general $G$, is the involution $\iota$ of $\mdl(X,G)$ sending $(E,\phi)$ to $(E,-\phi)$. The fixed points of $\iota$ correspond to Higgs pairs with structure group $\gt$, where $\theta$ runs over the inner involutions of $G$. Here $\gt$ is the subgroup of fixed points of $\theta$ and the Higgs field takes values in the $-1$-eigenspace of the automorphism of $\lie g$ induced by $\theta$. Via the non-abelian Hodge correspondence, these fixed points are in bijection with representations of the fundamental group of $X$ in the real form $G^{\sigma}$, where $\sigma=\theta\tau$ for some compact conjugation $\tau$ of $G$ that commutes with $\theta$ --- note that $\sigma$ is of Hodge type, since $\theta$ is inner. 

Secondly, the group of automorphisms $\Aut(G)$ of $G$ acts on $\mdl(X,G)$ by extension of structure group. More precisely, given a holomorphic automorphism $\theta$ of $G$ and a $G$-Higgs bundle $(E,\phi)$ over $X$, we may twist the $G$-action on the total space of $E$ by $\theta^{-1}$ to obtain a new $G$-bundle, which we denote by $\theta(E)$, and the isomorphism of vector bundles $E(\lie g)\cong \theta(E)(\lie g)$ induced by $\theta$ produces a Higgs field $\theta(\phi)$ for $\theta(E)$. The action of the subgroup of inner automorphisms $\Int(G)\le\Aut(G)$ respects isomorphism classes: for each $g\in G$, the map $E\to E$ given by multiplication by $g$ induces an isomorphism from $(E,\phi)$ to $(\Int_g(E),\Int_g(\phi))$. We thus get a left action of $\Out(G):=\Aut(G)/\Int(G)$. 

For example, if $a$ is a non-trivial element of $\Out(G)$ such that $a^2=1$, the fixed points correspond to $G^{\theta}$-Higgs bundles, where $\theta$ is an outer involution of $G$ lifting $a$. Since the actions of $\Out(G)$ and $\C^*$ commute, we may also consider the involution combining both $a$ and $-1$ sending $(E,\phi)$ to $(\theta(E),-\theta(\phi))$, in which case fixed points correspond to representations of $\pi_1(X)$ in certain real forms of $G$ which are no longer of Hodge type. In fact, varying $a$, all the possible real forms of $G$ appear --- see \cite{PR}. 

Another important group acting on the moduli space of $G$-Higgs bundles is $H^1(X,Z)$, the group of isomorphism classes of $Z$-bundles, where $Z$ is the centre of $G$. Given a $G$-Higgs bundle $(E,\phi)$ and a $Z$-bundle $L$, we may define the tensorization $E\otimes L$ to be the $G$-bundle obtained from the $G\times Z$-bundle $E\times L$ via extension of structure group by the multiplication homomorphism. Since the adjoint action of $Z$ on $\lie g$ is trivial, the Higgs field $\phi$ may also be regarded as a Higgs field of $E\otimes L$. 

It can be seen that the actions of $\Out(G)$ and $H^1(X,Z)$ do not commute, but rather the former twists the latter by extension of structure group. We thus get a combined right action of $H^1(X,Z)\rtimes\Out(G)\times\C^*$ on $\mdl(X,G)$, where the semidirect product is defined using the aforementioned action of $\Out(G)$ on $H^1(X,Z)$
.
A description of the fixed points of the action of a general finite cyclic subgroup of $H^1(X,Z)\rtimes\Out(G)\times\C^*$ on $\mdl(X,G)$ is given in \cite{PR}.

An important construction related to the action of $H^1(X,Z)$ in the context of vector bundles, which we generalize to Higgs bundles, is the well-known Narasimhan--Ramanan description of fixed points of the action of a finite cyclic subgroup $\langle L\rangle$ of the Jacobian, where $L$ is a line bundle of order $r$, on the moduli space of vector bundles of rank $n$ and degree $d$, where $r$ divides $n$. Narasimhan and Ramanan \cite{narasimhan-ramanan} find that the intersection of the fixed point locus with the stable locus is equal to the pushforward of the moduli space of stable vector bundles over $X_L$ of rank $n/r$ and degree $d$ satisfying certain additional conditions, where $X_L$ is the étale cover of $X$ determined by $L$. Nasser \cite{nasser} improves this result by showing that the whole polystable fixed point locus is equal to the pushforward of the whole moduli space of polystable vector bundles with the same invariants. 

The last action on $\mdl(X,G)$ that we consider is that of the group of holomorphic automorphisms $\Aut(X)$ of $X$ via pullback. This twists the action of $H^1(X,Z)$ via pullback, hence ultimately we get a right action of the group
 \begin{equation*}
     H^1(X,Z)\rtimes(\Aut(X)\times \Out(G))\times \mathbb{C}^*
     \tag{$\star$}
 \end{equation*}
on $\mdl(X,G)$, where the semidirect product is defined by the actions of $\Aut(X)$ and $\Out(G)$ on $H^1(X,Z)$ given by pullback and extension of structure group, respectively. This is given explicitly by
$$
  (E,\phi)\cdot (\alpha,\eta,a,\mu):=(\eta^*\theta^{-1}(E\otimes\alpha), \mu\eta^*\theta^{-1}(\phi)),
$$
for any element $(\alpha,\eta,a,\mu)$ in ($\star$) and any automorphism  $\theta$ of $G$ lifting $a$.  

Fixed points for the action of a finite subgroup of $\Aut(X)$ have been studied by several authors, including Andersen--Grove \cite{andersen-fixed-points}, Andersen \cite{andersen}, Heller--Schaposnik \cite{heller-schaposnik}, and in \cite{ow}. Fixed points of general finite subgroups of $\Aut(X)\times \Out(G)\times \C^*$ are described in the 2020 version of this paper by the second and third authors
\cite{oscar-suratno} in terms of twisted equivariant Higgs bundles.
In Section \ref{section-twisted-equivariant-higgs-pairs} of the current paper we merge those results with new results based on the 2023 PhD thesis of the 
first author \cite{barajas-thesis}.

This paper undertakes the task of describing the fixed points of the action of an arbitrary finite subgroup $\Gamma$ of ($\star$) on $\mdl(X,G)$. In particular, we generalize \cite{PR} to any finite subgroup. Although the action of $H^1(X,Z)$ is already considered in \cite{PR}, the fixed point description there is given in terms of reductions of the structure group $G$ to a possibly non-connected subgroup. In this paper we go one step further, showing that these reductions correspond to Higgs pairs over certain \'etale covers of $X$ whose structure group is the connected component of the aforementioned subgroup. This extends to $G$-Higgs bundles the Prym--Narasimhan--Ramanan construction given in \cite{prym-narasimhan-ramanan} for principal $G$-bundles, which in turn generalizes results of Narasimhan--Ramanan \cite{narasimhan-ramanan} for vector bundles.

Our answer to the general problem is given by Theorem \ref{th-prym-narasimhan-ramanan-general}. In order to give the reader a good grasp of it, we have isolated its main ingredients into few special cases where their role can be more clearly understood. The particular case of finite group actions of $Z$-bundles on the moduli space of holomorphic $G$-bundles is treated in \cite{prym-narasimhan-ramanan}. The formalism of twisted equivariant principal bundles, which plays a central role in this study, is developed in \cite{BGGM}. 

\subsection{Action of a finite subgroup \texorpdfstring{$\Gamma$}{Gamma} of \texorpdfstring{$\Aut(X)\times\Out(G)\times\C^*$}{without involving tensorization}.}\label{section-intro-no-tensorization}

\noindent We first describe the fixed points of a finite subgroup $\Gamma\le \Aut(X)\times\Out(G)\times\C^*$, that is, an arbitrary finite group of ($\star$) not involving tensorization by elements of $H^1(X,Z)$. In this situation, the fixed point variety $\mdl(X,G)^{\Gamma}$ is given in terms of twisted equivariant Higgs bundles over $X$, which are introduced in Section \ref{section-twisted-equivariant-higgs-pairs}. The corresponding results are explained in Section \ref{section-alpha-trivial}. Let $\eta$, $a$ and $\mu$ be the natural projection homomorphisms from $\Gamma$ to $\Aut(X)$, $\Out(G)$ and $\C^*$, and take a homomorphism $\theta:\Gamma\to\Aut(G)$ lifting $a$. 

Take a 2-cocycle $c\in Z^2(\Gamma,Z)$ --- a map $c:\Gamma\times\Gamma\to Z$ satisfying certain conditions --- see (\ref{eq-2-cocycle}). A (right) $(\theta,c)$-twisted $\Gamma$-equivariant action on a $G$-bundle $E$ over $X$, which we usually denote by $\bullet$, is a lift of the right action of $\Gamma$ on $X$ (determined by $\eta$) to a map from $\Gamma$ to the group of holomorphic automorphisms of the total space of $E$. This action must further twist the bundle $G$-action via $\theta$, and the composition of operating by $\gamma_1$ and $\gamma_2\in\Gamma$ must be equal to operating by $\gamma_1\gamma_2$ multiplied by $c(\gamma_1,\gamma_2)$. The pair $(E,\bullet)$ is called a $(\theta,c)$-twisted $\Gamma$-equivariant $G$-bundle. More generally, we may define (left or right) $(\theta,c)$-twisted $\Gamma$-actions on a set $M$ equipped with a group $G$-action. We review the necessary results on the theory of twisted equivariant actions in Section \ref{section-twisted-equivariant} --- for more details, see \cite{BGGM}.

Given a $(\theta,c)$-twisted $\Gamma$-equivariant $G$-bundle $(E,\bullet)$ and a character $\mu:\Gamma\to\C^*$, the adjoint bundle $E(\lie g) $ inherits an honest group action of $\Gamma$ making it a $\Gamma$-equivariant vector bundle. More precisely, each $\gamma\in\Gamma$ sends $(e,v)\in E(\lie g)$ to $(e\bullet\gamma,\mug\tg^{-1}(v))$. 
Hence there is a notion of $(\theta,c,\omu\theta)$-twisted $\Gamma$-equivariant $G$-Higgs bundle, namely a triple $(E,\bullet,\phi)$ such that $(E,\bullet)$ is a $(\theta,c)$-twisted $\Gamma$-equivariant $G$-bundle and the Higgs field $\phi$ is $\Gamma$-invariant. There are notions of (poly, semi)stability for twisted equivariant Higgs bundles and a moduli space $\mdl(X,G,\Gamma,\theta,c,\omu\theta)$ classifying isomorphism classes of polystable objects, see Section \ref{section-twisted-equivariant-higgs-pairs}. There is also a natural forgetful morphism $\mdl(X,G,\Gamma,\theta,c,\omu\theta)\to\mdl(X,G)$ omitting the twisted $\Gamma$-action, whose image we denote by $\widetilde{\mdl}(X,G,\Gamma,\theta,c,\omu\theta)$.

\vspace{5 pt}
{\bf Theorem A} (Theorem \ref{main}). {\it  Let ${\cM}_*(X,G)\subset \cM(X,G)$ be the open subvariety of $\cM(X,G)$ 
consisting of those $G$-Higgs bundles
 which are stable and simple. Then 
$$ \bigcup_{[c]\in H^2_a(\Gamma,Z)} 
\widetilde{\cM}(X,G,\Gamma,\theta,c,\omu\theta)
\subset
\cM(X,G)^{\Gamma}
$$
and 
 $$
{\cM}_*(X,G)^{\Gamma}\subset \bigcup_{[c]\in H^2_a(\Gamma,Z)} 
\widetilde{\cM}(X,G,\Gamma,\theta,c,\omu\theta).
$$
Here $\theta:\Gamma\to \Aut(G)$ is 
any  lift of $a:\Gamma\to \Out(G)$, and $H^2_a(\Gamma,Z)$ is the second cohomology group of $\Gamma$ with values in $Z$ with $\Gamma$-action induced by $a$.  Moreover, the subvarieties
\begin{equation*}
    {\cM}_*(X,G)\cap\widetilde{\cM}(X,G,\Gamma,\theta,c,\omu\theta)={\cM}_*(X,G)^{\Gamma}\cap\widetilde{\cM}(X,G,\Gamma,\theta,c,\omu\theta)
\end{equation*}
are all disjoint for different cohomology classes $[c]\in H^2_a(\Gamma,Z)$.}
\vspace{5 pt}

\subsection{Fixed points for trivial action on \texorpdfstring{$X$}{X}}\label{section-intro-no-X}
\noindent In Section \ref{section-prym-narasimhan-ramanan} we consider the action of an arbitrary finite subgroup $\Gamma$ of $H^1(X,Z)\rtimes\Out(G)\times \mathbb{C}^*$. Projections on $\Out(G)$ and $\C^*$ provide homomorphisms $a$ and $\mu$, and projection on $H^1(X,Z)$ yields a map $\alpha:\Gamma\to H^1(X,Z)$ which satisfies
$
\alpha_{\gamma\gamma'}=\alpha_\gamma \alpha^{\gamma}_{\gamma'}
$
for each $\gamma$ and $\gamma'$ in $\Gamma$,
where the superscript denotes the left action of $\Gamma$ on $H^1(X,Z)$ induced by $a$. Maps $\alpha$ satisfying this equation are called 1-cocycles, and we write $Z^1_a(\Gamma,H^1(X,Z))$ for the group of 1-cocycles. In general, given a left action $b:\Gamma\to\Aut(A)$ on a group $A$, there is a notion of 1-cocycle and we denote the set of 1-cocycles by $Z^1_{b}(\Gamma,A)$.

It is well known --- see e.g. \cite{BGGM} --- that there exists a homomomorphism $\theta:\Gamma\to\Aut(G)$ lifting $a$. Define the subgroups
$$\gt:=\{g\in G\suhthat\tg(g)=g\forevery\gamma\in\Gamma\}\le G$$
and
$$\gs:=\{g\in G\suhthat\tg(g)=z(\gamma,g)g\,\forevery\gamma\in\Gamma\;\text{and some}\;z(\gamma,g)\in Z\}\le G.$$
These fit into a short exact sequence of groups
\begin{equation*}
    1\to \gt\to\gs\xrightarrow{\cct} \zt.
\end{equation*}
Therefore, there is an induced map from the set of isomorphism classes of $\gs$-bundles $H^1(X,\underline{\gs})$ to $H^1(X,Z^1_a(\Gamma,Z))$. Since $G$ is semisimple, $Z$ is finite and so $X$ and $\Gamma$ may be exchanged, thus providing a map $\ctt:H^1(X,\underline{\gs})\to Z^1_a(\Gamma,H^1(X,Z))$ which may be thought of as a ``characteristic class'' map which measures the obstruction of each $\gs$-bundle to having a reduction of structure group to $\gt$. We may also introduce the $\mu$-weight space $\liegm$ of the automorphism $\theta$ of $\liegm$. The restriction of the adjoint action to $\gs$ preserves $\liegm$ and so there is a notion of $(\gs,\liegm)$-Higgs pair, namely a pair $(F,\psi)$ consisting of a $\gs$-bundle $F$ and a Higgs field $\psi\in H^0(X,F(\liegm)\times K_X)$. There is a moduli space $\mdl_{\alpha}(X,\gs,\liegm)$ of $(\gs,\liegm)$-Higgs pairs $(F,\psi)$ such that $\ctt(F)\cong\alpha$. 

There is also an extension of structure group morphism $\cM_{\alpha}(X,\gs,\liegm)\to\mdl(X,G)$, whose image we denote by $\widetilde{\cM}(X,\gs,\liegm)$. This image is independent of the class $[\theta]$ of $\theta$ in $\Hom(\Gamma,\Aut(G))/\sim$, where $\sim$ is the equivalence relation such that $\theta\sim \theta'$ if and only there exists $g$ in $G$ satisfying $\theta\sim \Int_g\theta'\Int_g^{-1}$. There is a bijection between lifts $\beta\theta$ of $a$ and 1-cocycles $\beta\in Z^1_{\theta}(\Gamma,\Int(G))$ in the sense of group cohomology \cite{serre-galois}, inducing a bijection between the set of classes of lifts in $\Hom(\Gamma,\Aut(G))/\sim$ and the first group cohomology set $H^1_{\theta}(\Gamma,\Int(G))$. Here $\theta$ is any fixed lift. 

\vspace{5 pt}
{\bf Theorem B} (Theorem \ref{th-fixed-points-oscar-ramanan-higgs}). {\it Fix $\theta\in\homgh$ lifting $a$. The following relations between moduli spaces hold:
\begin{enumerate}
    \item $$\bigcup_{[\beta]\in H^1_{\theta}(\Gamma,\Int(G))}\widetilde{\cM}_{\alpha}(X,G_{\beta\theta},\lieg^{\beta\theta}_{\mu})\subset\cM(X,G)^{\Gamma}. $$
    
    \item $$\cM_*(X,G)^{\Gamma}\subset\bigcup_{[\beta]\in H^1_{\theta}(\Gamma,\Int(G))}\widetilde{\cM}_{\alpha}(X,G_{\beta\theta},\lieg^{\beta\theta}_{\mu}).$$
    
\end{enumerate}

Moreover, the intersections 
$$\cM_*(X,G)\cap\widetilde{\cM}_{\alpha}(X,G_{\beta\theta},\lieg^{\beta\theta}_{\mu})=\cM_*(X,G)^{\Gamma}\cap\widetilde{\cM}_{\alpha}(X,G_{\beta\theta},\lieg^{\beta\theta}_{\mu})$$
are disjoint for different $[\beta]\in H^1_{\theta}(\Gamma,\Int(G))$.}
    
\vspace{5 pt}

\subsection{The Prym--Narasimhan--Ramanan construction}\label{section-intro-prym-narasimhan-ramanan}

\noindent In Theorem \ref{th-prym-narasimhan-ramanan-oscar-ramanan-higgs} we take the results of Theorem B one step further, applying the Prym--Narasimhan--Ramanan construction given in \cite{prym-narasimhan-ramanan}.

Let $\gt_0<\gs$ be the connected component of the identity and $\gamtt:=\gs/\gt_0$ the group of connected components. Let $\tau:\gamtt\to\Aut(\gt_0)$ be a lift of the characteristic homomorphism of the extension $\gs$. Then there is a 2-cocycle $c\in Z^2_{\tau}(\Gamma,Z(\gt_0))$, where $Z(\gt_0)$ is the centre of $\gt_0$, and an isomorphism $\gs\cong \gt_0\times_{(\tau,c)}\gamtt$ --- see Proposition \ref{prop-extensions-isomorphic-twisted-group}. Here $\gt_0\times_{(\tau,c)}\gamtt$ is the set $\gt_0\times\gamtt$ equipped with a group multiplication involving $\tau$ and $c$ --- e.g., if $c=1$ then this is a semidirect product, see (\ref{eq-def-twisted-product}).

Now, given a $\gs$-bundle $p_E:E\to X$, the bundle projection morphism $p_E$ can be factored as $E\to E/\gt_0\to X$, where $E/\gt_0\to X$ is a $\gamtt$-bundle over $X$. Let $Y$ be a connected component of $E/\gt_0$ with structure group $\gam\le\gamtt$. The reductiveness of $\gs$ implies that $\gamtt$ is finite, hence $Y$ may be regarded as an étale cover of $X$ with Galois group $\Gamma_Y$. Using the description of $\gs$ as a twisted product we may further equip $E$ with a $(\tau,c)$-twisted $\gamtt$-equivariant action --- see Section \ref{section-intro-no-tensorization}. This provides a bijection between the set of isomorphism classes of $\gs$-bundles over $X$ and a finite group-quotient of the set of isomorphism classes of $(\tau,c)$-twisted $\Gamma_Y$-equivariant $\gt_0$-bundles over $Y$ --- Proposition \ref{prop-prym-narasimhan-ramanan}. The finite group is equal to the centralizer of $\Gamma_Y$ in $\gamtt$, which coincides with the centre of $\gamtt$ if  $E/\gt_0$ is connected. All this theory was first developed in \cite{BGGM}. 

An analogous result for Higgs pairs is given by Proposition \ref{prop-twisted-equivariant-bundles-one-to-one}. This involves the notion of a $(\tau,c,\rho_Y)$-twisted $\Gamma_Y$-equivariant $(\gt_0,\liegm)$-Higgs pair over $Y$, where $\rho_{Y}:\Gamma_Y\to \GL(\liegm)$ is the $(\tau,c)$-twisted representation given by the composition of the map $\Gamma_Y\to\gs$ and the adjoint representation of $G$ on $\lie g$. This is a triple $(F,\bullet,\psi)$ consisting of a $(\tau,c)$-twisted $\Gamma_Y$-equivariant $\gt_0$-bundle $(F,\bullet)$ and a Higgs field $\psi\in H^0(Y,F(\liegm)\otimes K_Y)$ invariant with respect to the $\Gamma_Y$-action on $F(\liegm)$ induced by $\bullet$ and $\rho_Y$. There is a moduli space of $(\tau,c,\rho_Y)$-twisted $\Gamma_Y$-equivariant $(\gt_0,\liegm)$-Higgs pair over $Y$, which we denote $\mdl(Y,\gt_0,\Gamma_Y,\tau,c, \lieg^{\theta}_{\mu},\rho_Y)$.

\vspace{5 pt}
{\bf Theorem C} (Theorem \ref{th-prym-narasimhan-ramanan-oscar-ramanan-higgs}). {\it Write $\taut$ and $c^{\theta}$ instead of $\tau$ and $c$ to emphasize their dependence on $\theta$.  Then, for each homomorphism $\theta:\Gamma\to\Aut(G)$ lifting $a$ there is an isomorphism
\begin{equation}
    \bigsqcup_{\qqt(Y)\cong \alpha}\mdl(Y,\gt_0,\Gamma_Y,\tau^{\theta},c^{\theta}, \lieg^{\theta}_{\mu},\rho_Y)/Z_{\gamtt}(\Gamma_Y)\cong \mdl_{\alpha}(X,\gs,\liegm),
\end{equation}
where $Z_{\gamtt}(\Gamma_Y)$ is the centralizer of $\Gamma_Y$ in $\gamtt$, which acts on $\mdl(Y,\gt_0,\Gamma_Y,\tau^{\theta},c^{\theta}, \lieg^{\theta}_{\mu},\rho_Y)$ by extension of structure group --- see Proposition \ref{prop-action-centralizer}.}
\vspace{5 pt}

The combination of Theorems B and C is what we call Prym--Narasimhan--Ramanan construction, which we first introduced in \cite{prym-narasimhan-ramanan}. This may be regarded as a correspondence between fixed points for the $\Gamma$-action on $\cM(X,G)$, and fixed points in $\bigsqcup_{\qqt(Y)=\llambda}{\mdl}(Y,\gt_0, \liegm)$ for the action of each Galois group $\gam$ on ${\mdl}(Y,\gt_0, \liegm)$, such that $\gamma\in\gam$ sends a $(\gt_0, \liegm)$-Higgs pair $(F,\psi)$ over $Y$ to $\gamma^{*-1}\taug(F,\psi)$.

In Section \ref{section-example-generalize-narasimhan} we show how Theorems B and C generalize \cite{narasimhan-ramanan} to $\GL(n,\C)$-Higgs bundles. Let $G=\GL(n,\C)$ and $\Gamma\le J(X)$ be generated by a line bundle $L$ of finite order $r$. In this situation the only class $[\theta]\in\Hom(\Gamma,\Int(\GL(n,\C)))/\sim$ in the decomposition of the fixed point locus of Theorem B is represented by $\theta(L)=\Int_D$, where $D$ is the diagonal matrix whose eigenvalues are all the $r$-th roots of unity, each having multiplicity $m:=n/r$. In particular, $r$ must divide $n$. In this setting $\GL(n,\C)^{\theta}\cong\GL(m,\C)^{\times r}$ and $\GL(n,\C)_{\theta}\cong\GL(n,\C)^{\theta}\rtimes_{\tau}(\Z/r\Z)$, where the action $\tau$ of $\Z/r\Z$ on $\GL(m,\C)^{\times r}$ permutes the different copies of $\GL(m,\C)$. 

Let $p_L:X_L\to X$ be the étale cover determined by $L$, which has Galois group $\Z/r\Z$. Theorem C implies that $\mdl_*(X,\GL(n,\C))^{L}$ is isomorphic to an open subvariety in the moduli space $\mdl(X_L,\GL(m,\C)^{\times r},\Z/r\Z,\tau,1)/(\Z/r\Z)$. Let us translate this into the language of vector bundles with no consideration of the Higgs field: let $E$ be a $(\tau,1)$-twisted $\Z/r\Z$-equivariant $\GL(m,\C)^{\times r}$-bundle over $X_L$. The associated vector bundle is a direct sum $E_1\oplus\dots\oplus E_r$, where $E_i$ is a vector bundle of rank $m$. There is an induced $\Z/r\Z$-equivariant action which permutes the summands, hence $E_i\cong\zeta^{*i}E_1$, where $\zeta$ is a generator of $\gal(X_L/X)\cong \Z/r\Z$. The quotient of $E$ by this action is the pushforward $p_{L*}E_1$, which recovers the result of Narasimhan and Ramanan. 

\subsection{Fixed points for general \texorpdfstring{$\Gamma$}{Gamma}}

\noindent The fixed point locus $\mdl(X,G)^{\Gamma}$ for a general finite subgroup $\Gamma$ of ($\star$) is studied in Section \ref{section-general}. The study of the fixed points in this general setting is naturally much more involved than the particular cases mentioned above. However, we sketch the main ingredients --- see Section \ref{section-general} for more details. Projections on $\Aut(X)$, $\Out(G)$ and $\C^*$ provide homomorphisms $\eta:\Gamma\to\Aut(X)$, $a:\Gamma\to\Out(G)$ and $\mu:\Gamma\to\C^*$, whereas projection on $H^1(X,Z)$ yields a 1-cocycle $\alpha\in Z^1_{a,\eta}(\Gamma,H^1(X,Z))$, where $\Gamma$ acts on $H^1(X,Z)$ by sending $L$ to $\eta^{*-1}a(L)$. Our strategy is to push Theorem A to account for the potential non-injectivity of $\eta$, so that we first perform a reduction of structure group applying Theorem B to the action of $\ker\eta$ and then we get a twisted equivariant action of
the finite group of automorphisms of an étale cover of $X$ lifting $\eta(\Gamma)$. The main result is Theorem \ref{th-prym-narasimhan-ramanan-general}, where the fixed point subvariety inside of the smooth locus of $\mdl(X,G)$ is described as a union of components as in Theorems A and B. The fixed point components are parametrized by four parameters $[\beta],Y,[\tau^{\beta\theta}],[c^{\beta\theta}]$ and $[\tilde\tau]$, described as follows.
\begin{itemize}
    \item Given a fixed homomorphism $\theta:\ker \eta\to\Aut(G)$ lifting the restriction of $a$ to the kernel of $\eta$, the class $[\beta]$ is an element of $H^1_{\theta}(\ker\eta,\Int(G))$, generalizing the classes $[\beta]$ appearing in Theorem B.
    \item $Y$ runs over certain étale covers of $X$, determined by a condition which depends on $\alpha$ and $\beta$. These generalize the \'etale covers $Y$ appearing in Theorem $C$.
    \item Let $\wgam$ be the group consisting of automorphisms of $Y$ descending to automorphisms of $X$ in $\eta(\Gamma)$. Then $[\tau^{\beta\theta}]\in \Hom(\wgam,\Out(G^{\beta\theta}_0))$ and $[c^{\beta\theta}]\in H^2_{\tau^{\beta\theta}}(\wgam,Z(G^{\beta\theta}_0))$ are classes subject to a condition depending on $\beta$. These generalize the classes of the elements $\taut$ and $c^{\theta}$ appearing in Theorem C, after replacing $\theta$ with $\beta\theta$.
    \item $[\tilde\tau]$ runs over certain set of homomorphisms depending on the previous parameters. This parameter only appears in this general formulation. Together with $\mu$, it determines a $(\tau,c)$-twisted $(\gt_0,\wgam)$-action on $\liegm$ which we call $\rhotm$.
\end{itemize}

\vspace{5 pt}

{\bf Theorem D} (Theorem \ref{th-prym-narasimhan-ramanan-general}).The following relations between moduli spaces hold:
\begin{enumerate}
    \item $$\bigcup_{[\beta],Y,[\tau^{\beta\theta}],[c^{\beta\theta}],[\tilde\tau]}\wcM(Y_{\alpha},G^{\beta\theta}_0,\wgam,\tau^{\beta\theta},c^{\beta\theta},\tilde\tau,\lie g^{\beta\theta}_{\mu},\rhotm)
    \subset\cM(X,G)^{\Gamma}. $$
    
    \item $$\cM_*(X,G)^{\Gamma}\subset
    \bigcup_{[\beta],Y,[\tau^{\beta\theta}],[c^{\beta\theta}],[\tilde\tau]}\wcM(Y_{\alpha},G^{\beta\theta}_0,\wgam,\tau^{\beta\theta},c^{\beta\theta},\tilde\tau,\lie g^{\beta\theta}_{\mu},\rhotm).$$
    
\end{enumerate}

\subsection{Fixed points in character varieties}

\noindent The non-abelian Hodge correspondence provides a homeomorphism between $\mdl(X,G)$ and the character variety $\calR(X,G)$ parametrizing $G$-conjugacy classes of reductive representations $\pi_1(X)\to G$. The theory has an analogue for twisted equivariant Higgs bundles which we explain next. Fix an action of a finite group $\Gamma$ on $X$, a homomorphism $\theta:\Gamma\to\Aut(G)$ and a 2-cocycle $c\in Z^2_{\theta}(\Gamma,Z)$. Let $\calR(X,G,\Gamma,\theta,c)$ be the moduli space of $G$-conjugacy classes of reductive representations of the equivariant fundamental group $\pi_1(X,\Gamma,x)$ of $X$ in $G\times_{(\theta,c)}\Gamma$. Then $\calR(X,G,\Gamma,\theta,c)$ is homeomorphic to $\mdl(X,G,\Gamma,\theta,c)$. We may thus translate our results to give a description of the fixed point loci of certain finite group actions on character varieties. 

More precisely, an element $(\alpha,\eta,a)$ in $H^1(X,Z)\rtimes(\Aut(X)\times\Out(G))$ sends a representation $\rho:\pi_1(X,x)\to G$ to $\theta^{-1}\circ(\eta_*(\rho\otimes\alpha))$. Here $\theta\in\Aut(G)$ is a lift of $a$, $\eta_*$ is induced by the homomomorphism $\eta_*:\pi_1(X,x)\to\pi_1(X,\eta(x))$ and we are calling $\alpha$ to its holonomy representation $\pi_1(X,x)\to Z$ by abuse of notation. 
In Theorem \ref{th-prym-narasimhan-ramanan-character-general} we state that the variety of irreducible and simple representations $\calR_{ss}(X,G)^{\Gamma}$ which are fixed by $\Gamma$ is contained in a union of images of moduli spaces $\calR(Y,G^{\theta}_0,\wgam,\tau,c)$, and this union is contained in $\calR(X,G)^{\Gamma}$. Particular cases of this theorem can be found in Sections \ref{section-prym-narasimhan-ramanan-character-varieties} and \ref{section-character-variety-alpha-trivial}. 


\subsection{Outline of the paper}

\noindent In Section \ref{section-big-higgs-bundles} we introduce the moduli spaces of Higgs bundles, the non-abelian Hodge correspondence and the group action of ($\star$) on $\mdl(X,G)$. In Section \ref{section-lie-theory} we develop some technical group theoretic notions which are necessary throughout the rest of the paper. In Section \ref{section-twisted-equivariant} we define twisted equivariant structures on Higgs pairs and their relation to Higgs pairs with non-connected structure group. Section \ref{section-alpha-trivial} is devoted to the proof of Theorem A and its application to some examples. In Section \ref{section-trivial-eta} we prove Theorems B and C, and apply them to some examples. Finally, in Section \ref{section-general} we treat the general case, first assuming that $\alpha$ is trivial and then proving Theorem D, which is the general result.

As mentioned above, this paper merges results of the 2020 version \cite{oscar-suratno} by the second and third authors with results based on the the 2023 PhD thesis of the first author \cite{barajas-thesis}. 

\subsection{Further developments}

\noindent An important  motivation for this paper is  
the  identification of  hyperk\"ahler and Lagrangian subvarieties of $\cM(X,G)$, which are the support of branes in the context of mirror 
symmetry and Langlands duality as introduced by Kapustin and Witten \cite{kapustin-witten}. 
For example, if the projection of a finite subgroup $\Gamma$ of ($\star$) on $\C^*$ is trivial, the smooth fixed point locus is hyperkähler and so it is the potential support of BBB-branes. However, if $\Gamma$ has order 2 and the corresponding projection on $\C^*$ is non-trivial --- i.e., the generator multiplies the Higgs field by $-1$ ---, the smooth fixed point locus is the potential support of BAA-branes. We expect the Prym--Narasimhan--Ramanan construction to provide examples of fully equipped branes: this has been achieved by Franco--Gothen--Oliveira--Peón-Nieto \cite{franco-branes} in the case of the action of a finite cyclic subgroup of the Jacobian.


There are no instances of supports for branes of type ABA and AAB in this paper. Some of these arise from antiholomorphic involutions of $G$ and $X$, which we plan to study in the future. A study of real $G$-Higgs bundles, which corresponds to $\Gamma=\Z/2\Z$ acting antiholomorphically both on $X$ and $G$, is carried on in \cite{biswas-García-prada,bhp,biswas-calvo-García-Prada}. Other references are Baraglia--Schaposnik \cite{baraglia-schaposnik}, \cite{biswas-García-prada} and \cite{bhp,biswas-García-Prada-hurtubise2,biswas-García-Prada-hurtubise3}.

We expect that our fixed point description may be applied to the study of cohomological invariants associated to automorphisms of moduli spaces of Higgs bundles, via localization. An example of this is \cite{narasimhan-ramanan}, where the Atiyah--Singer fixed point theorem is used to calculate topological invariants of moduli spaces of vector bundles. Very recently, Andersen--Mistegård \cite{hitchin-equivariant-index} have studied the Hitchin equivariant index associated to the action of a cyclic automorphism $f\in\Aut(X)$ on the cohomology of the determinant line bundle over the moduli space of Higgs bundles of rank $2$ and degree 1. This is a topological invariant of the mapping torus of $f$. The analysis of the fixed points of $f$ in the moduli space is crucial in this work. A more explicit calculation of the invariant will appear in future joint work of the first author, joint with Andersen and Mistegård \cite{andersen-barajas}.

Whenever $\mdl(X,G)$ may be realized as a Hitchin fibration, we expect our fixed point descriptions to be 
useful for the study of the Hitchin fibres of different fixed point subvarieties. Some references in this direction are Heller--Schaposnik \cite{heller-schaposnik}, Schaffhauser \cite{schaffhauser} and Schaposnik \cite{schaposnik-thesis}.

We also plan to extend our fixed point description to parabolic Higgs bundles in future work. These correspond, via non-abelian Hodge theory, to representations of the fundamental group of punctured surfaces with fixed conjugacy classes around the punctures \cite{simpson-noncompact,oscar-biquard-mundet}. The parabolic setup is geometrically richer and perhaps more natural from the physics point of view of mirror symmetry \cite{gukov-witten,kapustin-witten}.

Finally, our description of fixed points in moduli spaces of Higgs pairs may be applied to the study of fixed point subvarieties for Higgs bundles associated to real forms, which may lead to results on the topology of higher Teichmüller spaces \cite{cayley}.

\noindent{\bf Acknowledgements}. We wish to thank Peter Gothen and 
S. Ramanan for very useful discussions.

\section{Higgs bundles}\label{section-big-higgs-bundles}

Throughout this section $G$ is a connected semisimple Lie group with centre $Z$ and $X$ is a compact Riemann surface with canonical bundle $K_X$.

\subsection{Moduli space of \texorpdfstring{$G$}{G}-Higgs bundles}\label{section-higgs-bundles}

A $G$-Higgs bundle over $X$ is a pair $(E,\phi)$ where $E$ is a holomorphic principal $G$-bundle $E$ and $\phi$ is a holomorphic section of $E(\mathfrak{g})\otimes K_X$, 
where $E(\mathfrak{g}):=E\times_{G} \mathfrak{g}$ is the associated bundle corresponding to the adjoint action of $G$ on the Lie 
algebra $\mathfrak{g}$. We sometimes denote the adjoint bundle
$E(\mathfrak{g})$ by $\ad(E)$.

Two $G$-Higgs bundles $(E,\phi)$ and $(E',\phi')$ are isomorphic if there is an isomorphism
$f:E\to E'$ such that the induced isomorphism $\ad(f)\otimes \id_{{K_X}}:E(\mathfrak{g})\otimes K_X\to E'(\mathfrak{g})\otimes K_X$
sends $\phi$ to $\phi'$.

Given a $G$-Higgs bundle $(E,\phi)$, each element of the centre $Z\subset G$ determines an automorphism of $(E,\phi)$ via the $G$-action. The $G$-Higgs bundle $(E,\phi)$ is said to be {\bf simple} if $\Aut(E,\phi)= Z$ where $\Aut(E,\phi)$
is the group of Higgs bundle automorphisms of $(E,\phi)$.

Let us recall the definitions of stability, semistability and polystability for $G$-Higgs bundles over $X$. Following \cite{PBI}, fix a maximal compact subgroup $K$ of $G$ with Lie algebra $\lie k$. Choose a $G$-invariant non-degenerate symmetric bilinear form $\langle\cdot,\cdot\rangle$ on $\lie g$. Every element $s\in i\lie k$ determines a parabolic subgroup $P_s$ of $G$, namely
\begin{equation}\label{eq-def-Ps}
    P_s:=\{g\in G\suhthat e^{ts}ge^{-ts}\,\text{remains bounded as}\;t\to\infty\}.
\end{equation}
If $L_s$ is its Levi subgroup then $K_s:=K\cap L_s$ is a maximal compact subgroup of $L_s$ and its inclusion in $P_s$ is a homotopy equivalence. Now let $E$ be a $G$-bundle with a holomorphic reduction $\sigma\in H^0(X,E(G/P_s))$, where $E(G/P_s)$ is the $G/P_s$-bundle associated to $E$ via the $G$-action given by multiplication on the left. We denote by $E_{\sigma}$ the corresponding $P_s$-bundle $\sigma^*(E\to E(G/P_s))$. Then there is a smooth reduction $\sigma'\in\Omega^0(X,E_{\sigma}/K_s)$, and we may equip the corresponding $K_s$-bundle with a connection $A$ with curvature $F_A$. We define
\begin{equation}\label{eq-def-deg}
    \deg E(\sigma,s):=\frac{i}{2\pi}\int_X\chi_s(F_A),
\end{equation}
where $\chi_s$ is the image of $s$ under the isomorphism $\lie g\cong\lie g^*$ induced by the non-degenerate pairing.

\begin{definition}\label{def-stability-higgs-bundle}
A $G$-Higgs bundle $(E,\phi)$ over $X$ is:
\begin{itemize}
    \item \textbf{Semistable} if $\deg E(\sigma,s)\ge 0$ for any $s\in i\lie k$ and any reduction of structure group $\sigma\in H^0(X,E(G/P_s))$ such that $\phi\in H^0(E_{\sigma}(\lie p_s)\otimes K_X)$, where $\lie p_s$ is the Lie algebra of $P_s$.
    \item \textbf{Stable} if $\deg E(\sigma,s)> 0$ for any $s\in i\lie k$ and any reduction of structure group $\sigma\in H^0(X,E(G/P_s))$ such that $\phi\in H^0(E_{\sigma}(\lie p_s)\otimes K_X)$.
    \item \textbf{Polystable} if it is semistable and, if $\deg E(\sigma,s)=0$ for some $s\in i\lie k$ and a reduction $\sigma\in H^0(X,E(G/P_s))$ such that $\phi\in H^0(E_{\sigma}(\lie p_s)\otimes K_X)$, there is a further holomorphic reduction of structure group $\sigma'\in H^0(X,E_{\sigma}(P_s/L_s))$ with $\phi\in H^0(E_{\sigma'}(\lie l_s)\otimes K_X)$, where $\lie l_s$ is the Lie algebra of $L_s$ and $E_{\sigma'}$ is the reduction of $E_{\sigma}$ to $L_s$.
\end{itemize}
\end{definition}


Let  $\cM(X,G)$  be the  {moduli space of isomorphism classes of polystable $G$-Higgs bundles}.
By Schmitt's general Geometric Invariant Theory construction 
\cite{schmitt:2008}, the space
$\cM(X,G)$ has the structure of a quasi-projective variety. Its smooth locus is the open subvariety of stable and simple points $\cM_*(X,G)$. For related  constructions
see \cite{nitsure,simpson:1994,simpson:1995}.
If we fix  the topological class $c$  of $E$ we can consider  
$\cM_c(X,G)\subset \cM(X,G)$, the  moduli space of $G$-Higgs bundles
with fixed topological class $c$. Since $G$ is connected, the topological 
class is given by an element  $c\in \pi_1(G)$.
In this situation it is well-known 
that $\cM_c(X,G)$ is non-empty 
and connected \cite{donagi-pantev,li}.  A Morse-theoretic  proof is given
in \cite{García-Prada-oliveira}, where the connectedness 
and non-emptiness  of  $\cM_c(X,G)$ 
is also proved when $G$ is a non-connected complex reductive Lie group.

\subsection{Higgs bundles and Hitchin equations}\label{section-higgs-bundles-and-hitchin-equations}

As above, let $G$ be a connected semisimple complex Lie group and let $K\subset G$ be a maximal compact subgroup
of $G$. Let $(E,\phi)$ be a $G$-Higgs bundle on $X$. Let $h$ be a reduction of structure group of $E$ from $G$
 to $K$, and $F_h$ be the curvature of  the {Chern--Singer connection} --- the unique connection on $E$ compatible with $h$ and the holomorphic structure of $E$.
 Let $\sigma_h:\Omega^{1,0}(X,E(\lieg))\to \Omega^{0,1}(X,E(\lieg))$
be the $\C$-antilinear map defined by the reduction $h$ and the conjugation 
between $(1,0)$ and $(0,1)$-forms on $X$.
Consider {Hitchin's equation} 
\begin{equation}\label{hitchin-equation}
F_h-[\phi,\sigma_h(\phi)]=0.
\end{equation}
By the works of Hitchin \cite{hitchin1987} and Simpson \cite{simpson}, the following holds. 
\begin{theorem}\label{EH1}
Let $(E,\phi)$ be a $G$-Higgs bundle on $X$.
Then $(E,\phi)$ is polystable if and only if the $G$-bundle $E$ 
admits a reduction of structure group $h$ to $K$ satisfying (\ref{hitchin-equation}). 
\end{theorem}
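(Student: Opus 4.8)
The plan is to prove the two implications separately, following the standard Hitchin--Kobayashi strategy for principal bundles carrying a section (as developed for Higgs bundles in \cite{hitchin1987,simpson,PBI}). The implication ``solution of (\ref{hitchin-equation}) $\Rightarrow$ polystable'' is the elementary, Chern--Weil direction; the existence of a solution from polystability is the analytic heart of the matter and rests on a moment-map reformulation together with an a priori estimate governed by the stability inequalities of Definition \ref{stability}.

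For the first direction, let $h$ solve (\ref{hitchin-equation}), and let $P\subset G$ be a parabolic, $\chi$ a non-trivial antidominant character of $P$, and $\sigma$ a reduction to $P$ with $\phi\in H^0(X,E_\sigma(\liep)\otimes K_X)$. I would compute $\deg(E)(\sigma,\chi)=\deg(E_\sigma\times_\chi\C^*)$ by Chern--Weil, expressing it as the integral of the $h$-curvature paired with the grading element determined by $\chi$, and then substitute $F_h=[\phi,\tau_h(\phi)]$ using (\ref{hitchin-equation}). Integration by parts yields an identity of the shape
\[
\deg(E)(\sigma,\chi)=\norm{A_\sigma}_{L^2}^2+\norm{\phi_-}_{L^2}^2 \ge 0,
\]
where $A_\sigma$ measures the failure of $\sigma$ to be holomorphic with respect to $h$ and $\phi_-$ is the component of $\phi$ transverse to the Levi subalgebra $\liel$ (both terms manifestly nonnegative because $\phi$ takes values in $\liep$ and $\chi$ is antidominant). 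This gives semistability. If $\chi$ is strictly antidominant and $\deg(E)(\sigma,\chi)=0$, then both $L^2$-norms vanish, forcing $\sigma$ to reduce holomorphically to $L$ and $\phi$ to lie in $\liel$, which produces the further reduction $\sigma_L$ required for polystability.

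For the converse I would realize (\ref{hitchin-equation}) as the vanishing of a moment map. Fix a $C^\infty$ reduction of $E$ to a maximal compact $K\subset G$, giving a smooth $K$-bundle $E_K$; the space of pairs $(\dbar_E,\phi)$ carries a \kahler{} structure on which the unitary gauge group $\mathcal G$ acts with moment map $(\dbar_E,\phi)\mapsto F_h-[\phi,\tau_h(\phi)]$, while the orbit of the complexified gauge group $\mathcal G^{\C}$ through $(\dbar_E,\phi)$ sweeps out the isomorphism class of the $G$-Higgs bundle. Solving (\ref{hitchin-equation}) inside the isomorphism class is therefore the same as finding a zero of the moment map in this $\mathcal G^{\C}$-orbit, equivalently a metric $h$ in $\mathcal G^{\C}/\mathcal G$. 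Existence is obtained by minimizing the associated Donaldson-type functional over $\mathcal G^{\C}/\mathcal G$: the functional is convex along the geodesics $h_t=h_0\,e^{ts}$, and under stability it is proper, so a minimizing sequence can be shown to converge by elliptic regularity and Uhlenbeck-type weak compactness to a smooth zero of the moment map; in the polystable (non-stable) case one applies this on each stable factor of the canonical reduction and reassembles. An alternative route fixes a faithful representation $G\into\SL(V)$, applies Simpson's existence theorem to the associated vector Higgs bundle $(E(V),\phi)$, and descends the harmonic metric to a $K$-reduction.

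The hard part will be the a priori estimate that prevents the minimizing sequence from degenerating. One must bound the metrics $h$ along the minimization in $C^0$; were this bound to fail, a limiting argument would extract a nonzero $\phi$-invariant weakly holomorphic reduction of $E$ to a parabolic that violates the degree inequality, contradicting polystability. Making this precise is exactly the Uhlenbeck--Yau argument, in the form adapted by Simpson to Higgs fields and transcribed to the principal-bundle setting, and it is the step where polystability is used decisively: the destabilizing reduction produced from an unbounded sequence is incompatible with Definition \ref{stability}. In the representation-theoretic route this obstacle is absorbed into Simpson's theorem, and the residual difficulty is the descent step, where the automorphisms coming from the $G$-structure force the (essentially unique) harmonic metric on $E(V)$ to reduce to $K\subset G$.
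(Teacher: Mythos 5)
The paper offers no proof of Theorem \ref{EH1} at all --- it is quoted as a known result with the citation to \cite{hitchin1987,simpson,PBI} --- and your outline is an accurate summary of precisely the strategy of those references: the Chern--Weil computation for the ``solution $\Rightarrow$ polystable'' direction (with the equality case forcing the Levi reduction), and the moment-map/Donaldson-functional minimization with the Uhlenbeck--Yau-type a priori estimate, or equivalently Simpson's theorem applied through a faithful representation followed by descent to a $K$-reduction, for the converse. So your proposal matches the approach the paper relies on, with the understanding that it is a roadmap of the cited analysis rather than a self-contained treatment of the properness estimate.
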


\subsection{Higgs bundles and representations of the fundamental group}
\label{higgs-reps}

By a {\bf representation} we mean a 
 homomorphism $\rho:\pi_{1}(X)\to G$. The set $\Hom(\pi_{1}(X),G)$ of all such homomorphisms is an analytic variety \cite{goldman}. The group $G$ acts on $\Hom(\pi_{1}(X),G)$ via
$ g\cdot \rho(\gamma):=g\rho(\gamma)g^{-1}$. for $g\in G$, $\rho\in \Hom(\pi_{1}(X),G)$ and $\gamma\in \pi_{1}(X)$.
 A representation  $\rho\in \Hom(\pi_{1}(X),G)$ is called
 {\bf reductive} if its
 composition with the adjoint representation of $G$ in the
 Lie algebra $\mathfrak{g}$  decomposes as a direct sum of irreducible 
representations. If we restrict the action to the subspace $\Hom^{+}(\pi_{1}(X),G)$ consisting of reductive representations, the orbit space 
 is Hausdorff.
 The {moduli space of reductive representations} is defined to be the orbit space
 \[\mathcal{R}(X,G):=\Hom^{+}(\pi_{1}(X),G)/G.\] This orbit space coincides with the GIT quotient
 \[\mathcal{R}(X,G):=\Hom(\pi_{1}(X),G)\sslash G,\]
 hence it has a structure of an affine algebraic variety \cite{richardson}. 
Let $\rho:\pi_{1}(X)\to G$ be a representation.
 Let $Z_{{G}}(\rho)\le G$ be the centralizer of $\rho(\pi_{1}(X))$. We say that $\rho$ is {\bf irreducible} if and only if
 it is reductive and $Z_{{G}}(\rho)=Z$ where $Z$ is the centre of $G$.
 
 Theorem \ref{EH1} states that the polystability of $(E,\phi)$
 is equivalent to a reduction $h$ satisfying Hitchin's equation. Assuming that (\ref{hitchin-equation}) indeed holds, a simple computation shows that, if $\nabla_h$ is the Chern--Singer connection of $h$,
$D=\nabla_h+\phi-\sigma_h(\phi)$
is a flat connection on the  $G$-bundle 
$E$ whose holonomy defines a reductive representation of $\pi_{1}(X)$ in  $G$. By a theorem of Donaldson 
\cite{donaldson} and Corlette \cite{corlette}, it  follows that all reductive representations $\rho:\pi_{1}(X)\to G$ arise in this way. 
More concretely, the
following holds.
 
 \begin{theorem}[Non-abelian Hodge correspondence]\label{rep1}
  Let $G$ be a complex semisimple Lie group. The moduli space $\cM(X,G)$ of polystable 
$G$-Higgs bundles and the moduli space $\mathcal{R}(X,G)$
  of reductive representations of $\pi_{1}(X)$ in $G$ are homeomorphic. Under 
this homeomorphism, the set of irreducible representations in  $\mathcal{R}(X,G)$
is in one-to-one correspondence with the set of stable and simple $G$-Higgs bundles.
 \end{theorem}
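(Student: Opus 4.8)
The plan is to construct two mutually inverse continuous maps between $\cM(G)$ and $\calR(G)$, one built from the Hitchin--Kobayashi correspondence of Theorem \ref{EH1} and the other from the harmonic metric theorem of Corlette and Donaldson, and then to match the distinguished loci on each side by comparing automorphism groups with centralizers.

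First I would define the map $\cM(G)\to\calR(G)$. Given a polystable $G$-Higgs bundle $(E,\phi)$, Theorem \ref{EH1} provides a reduction $h$ of structure group to a maximal compact $K\subset G$ solving the Hitchin equation $F_h-[\phi,\tau_h(\phi)]=0$. Setting $D=\nabla_h+\phi-\tau_h(\phi)$, where $\nabla_h$ is the Chern connection, one checks by type decomposition that the curvature $F_D$ vanishes: on the Riemann surface $X$ there are no nonzero forms of type $(2,0)$ or $(0,2)$, the $(1,1)$-component of $F_D$ is exactly the left-hand side of the Hitchin equation, and the remaining identities follow from $\dbar_E\phi=0$. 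Hence $D$ is a flat $G$-connection, and its holonomy defines a representation $\rho\colon\pi_1(X)\to G$, well-defined up to conjugation once a base point and an identification of the fibre with $G$ are chosen. I would then verify that $\rho$ is reductive, the point being that polystability of $(E,\phi)$ forces $\Ad\circ\rho$ to be semisimple, i.e.\ $\rho$ to be reductive; uniqueness of the harmonic metric up to the residual gauge group shows $\rho$ is independent of choices, so the assignment descends to a well-defined map on isomorphism classes.

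Next I would construct the inverse $\calR(G)\to\cM(G)$. A reductive representation $\rho$ yields a flat principal $G$-bundle $E_\rho$ with flat connection $D$. By the theorem of Corlette \cite{Corlette} and Donaldson \cite{donaldson}, reductivity of $\rho$ is equivalent to the existence of a harmonic reduction $h$ to $K$, i.e.\ a $\rho$-equivariant harmonic map $\widetilde{X}\to G/K$. Using $h$ I would split $D=\nabla_h+\Psi$ into its $h$-unitary part $\nabla_h$ and its $h$-self-adjoint part $\Psi$, and then take the $(1,0)$-component $\phi$ of $\Psi$ together with the holomorphic structure $\dbar_E$ determined by the $(0,1)$-part of $\nabla_h$. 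Harmonicity of $h$ is precisely the statement that $\dbar_E\phi=0$ and that $h$ solves the Hitchin equation for $(E,\phi)$, so by Theorem \ref{EH1} the resulting $G$-Higgs bundle is polystable. This produces a map inverse to the previous one, since both constructions are governed by the same pair $(\nabla_h,\phi)$ attached to the harmonic metric.

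The remaining work is to check that these assignments are mutually inverse and define a homeomorphism, and to identify the distinguished loci. Mutual inversion is formal once one observes that in both directions the data is the solution $(h,\nabla_h,\phi,D)$ of the Hitchin equation, unique up to $K$-gauge transformations; the two moduli spaces are then two quotient descriptions of the same solution space. I expect the main obstacle to be continuity in both directions: one must show that the harmonic metric, and hence the extracted Higgs field or holonomy, varies continuously with the point of the moduli space, which rests on the uniqueness of the harmonic metric together with an estimate and compactness argument controlling it under deformation of the underlying object; this upgrades the continuous bijection to a homeomorphism. Finally, for the last assertion I would compare stabilizers: under the correspondence the automorphism group $\Aut(E,\phi)$ of a polystable $(E,\phi)$ coincides with the centralizer $Z_{G}(\rho)$ of the image of the associated representation. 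Thus $(E,\phi)$ is stable and simple, meaning $\Aut(E,\phi)=Z$, if and only if $Z_{G}(\rho)=Z$, which is exactly the condition that $\rho$ be irreducible.
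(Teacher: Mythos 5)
Your proposal is correct and follows essentially the same route as the paper, which does not give a detailed proof but recalls exactly this construction (harmonic metric from Theorem \ref{EH1}, flat connection $D=\nabla_h+\phi-\tau_h(\phi)$, and the converse via Corlette--Donaldson) before citing \cite{hitchin1987,simpson,donaldson,Corlette,PBI} for the full statement. Your identification of the stable-and-simple locus with the irreducible locus via $\Aut(E,\phi)=Z_G(\rho)$ is also the standard argument underlying the second assertion.
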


\subsection{Group actions on the moduli space}\label{section-action}

Let $(E,\phi)$ be a $G$-Higgs bundle. An automorphism $\theta$ of $G$ provides another $G$-Higgs 
bundle $(\theta(E),\theta(\phi))$, as follows: The bundle $\theta(E)$ is the holomorphic principal $G$-bundle with total space $E$ and $G$-action 
$$E\times G\to E;\,(e,g)\mapsto e\theta^{-1}(g),$$
where we have written the action of $G$ on $E$ by adjoining elements of $G$ on the right. Alternatively, this is just the $G$-bundle obtained from $E$ by the extension of structure group induced by $\theta$. If $\phi$ is locally equal to $(e,v)\otimes k$ for some $v\in \lieg$ and local sections $e$ and $k$ of $E$ and $K_X$ respectively, $\theta(\phi)$ is locally equal to $(e,\theta(v))\otimes k$. This is well defined, since 
\begin{align*}
(e g,\theta(\Ad_{g^{-1}}v))&=(e \theta^{-1}(\theta(g)),\Ad_{\theta(g^{-1})}\theta(v))
\\&=(e\cdot\theta(g),\Ad_{\theta(g^{-1})}\theta(v))
\\&=(e,\Ad_{\theta(g)}\Ad_{\theta(g^{-1})}\theta(v))
\\&=(e,\theta(v)),
\end{align*}
where the presence or absence of the dot denotes the $G$-action on $\theta(E)$ or $E$, respectively. 
Note that this defines a left action of the group $\Aut(G)$ of holomorphic automorphisms of $G$ on the set of $G$-Higgs bundles. We sometimes write $\theta(E,\phi)$ instead of $(\theta(E),\theta(\phi))$. 

Let $\Int(G)$ be the group of inner automorphisms of $G$. There is a surjection
$$\Int:G\to\Int(G);\,g\mapsto\Int_g,$$
whose kernel is the centre $Z$ of $G$, and hence $\Int(G)$ is isomorphic to the adjoint group $G/Z$. The group $\Int(G)$ is a normal subgroup of $\Aut(G)$, hence we can define the quotient $\Out(G):=\Aut(G)/\Int(G)$. The natural left action of $\Aut(G)$ induces a right action of $\outg$ on the set of isomorphism classes of $G$-Higgs bundles as follows: given a class $a\in\outg=\Aut(G)/\Int(G)$, we may choose a representative $\theta$ in $\Aut(G)$ and consider the action on the set of $G$-Higgs bundles sending $(E,\phi)$ to $\theta^{-1}(E,\phi)$. The isomorphism class of $\theta^{-1}(E,\phi)$ is independent of the choice of $\theta$, since multiplication by $g$ induces an isomorphism $(E,\phi)\xrightarrow{\sim}\Int_g(E,\phi)$ for every $g\in G$. 


There is also an action of the group of $Z$-bundles over $X$ on the set of isomorphism classes of $G$-Higgs bundles. Recall that this group is isomorphic to the first cohomology group $H^1(X,Z)$, so we will use this notation throughout the paper. In order to define the action, let $\alpha\in H^1(X,Z)$ and a $G$-Higgs bundle $(E,\phi)$. Construct the fibre product $E\times_X\alpha$ with respect to $X$, which fits in the pullback diagramme
\[\begin{tikzcd}
E\times_X\alpha\arrow{r}\arrow{d} &
E\arrow{d}\\
\alpha\arrow{r} & X
\end{tikzcd}.
\]
This is a $G\times Z$-bundle over $X$. Let $E\otimes\alpha$ be the extension of structure group of $E\times_X\alpha$ by the multiplication homomorphism $G\times Z\to G$.
Since the adjoint action of $Z$ on $\lie g$ is trivial, $\ad(E\otimes\alpha)=\ad(E)$, so that $\phi$ may also be regarded as a section of $\ad(E\otimes\alpha)\otimes K_X$.

Finally, there is an action of the group of holomorphic automorphisms $\Aut(X)$ of $X$ given by pullback, and an action of $\C^*$ given by multiplying the Higgs field. The first one is a right action and the second one is right and left, since $\C^*$ is abelian.

The above four actions fit together into a right action of the group $H^1(X,Z)\rtimes(\outg\times\Aut(X))\times\C^*$ on the set of isomorphism classes of $G$-Higgs bundles on $X$, where the left action of $\outg\times\Aut(X)$ on $H^1(X,Z)$ which defines the semidirect product is given by 
\begin{equation*}
    (\outg\times\Aut(X))\times H^1(X,Z)\to H^1(X,Z);\,((a,\eta),\alpha)\mapsto \eta^{*-1}a(\alpha).
\end{equation*}
Note that $a(\alpha)$ is well defined because $\Int(G)$ acts trivially on $Z$.
Explicitly, 
\begin{equation}\label{eq-action}
    (E,\phi)\cdot(\alpha,a,\eta,\mu):=(\eta^*\theta^{-1}(E\otimes\alpha),\mu \eta^*\theta^{-1}\phi),
\end{equation}
where $\theta\in\Aut(G)$ is any element in the coset $a\in\Out(G)$.
Indeed, given elements $\alpha$ and $\alpha'$ in $H^1(X,Z)$, $a$ and $a'$ in $\Out(G)$, $\eta$ and $\eta'\in\Aut(X)$ and $\mu$ and $\mu'$ in $\C^*$, together with a $G$-Higgs bundle $(E,\phi)$ over $X$,
\begin{align*}
    (E\cdot(\alpha,a,\eta,\mu))\cdot(\alpha',a',\eta',\mu')&= (\eta'^*\theta'^{-1}(\eta^*\theta^{-1}(E\otimes\alpha)\otimes\alpha'),\mu'\mu \eta'^*\theta'^{-1}(\eta^*\theta^{-1}(\phi)))\\&=
((\eta\eta')^*(\theta\theta')^{-1}(E\otimes\alpha\otimes \eta^{*-1}a(\alpha')),\mu'\mu(\theta\theta')^{-1}(\phi))\\&=
E\cdot(\alpha\eta^{*-1}a(\alpha'),aa',\eta\eta',\mu\mu'),
\end{align*}
where $\theta$ and $\theta'$ are automorphisms in the cosets $a$ and $a'$, respectively.
Since this action preserves (poly)stability and simplicity, it induces an action on $\cM(X,G)$ which preserves the locus of simple and stable points $\cM_*(X,G)$.

\begin{remark}\label{remark-induced-isomorphisms}
It is important to notice that, given an isomorphism of $G$-Higgs bundles
$$f:(E,\phi)\xrightarrow{\sim} (F,\psi)$$
and an element $(\alpha,\theta,\eta,\mu)\in H^1(X,Z)\rtimes(\Aut(G)\times\Aut(X))\times\C^*$, we
may define an induced isomorphism
$$\eta^*f\otimes \id:\eta^*\theta^{-1}(E\otimes\alpha,\mu\phi)\to \eta^*\theta^{-1}(F\otimes\alpha,\mu\psi)$$
via the natural biholomorphisms between $E$ and $\theta(E)$, and $F$ and $\theta(F)$.
\end{remark}

\section{Background on group theory}\label{section-lie-theory}

In order to state our fixed point description we first need to introduce some group theoretic notions.

\subsection{Lifts and non-abelian cohomology}\label{section-lifts-and-non-abelian-cohomology}

Let $G$ be a complex Lie group with centre $Z$. We follow \cite{PR} to describe the different equivalence classes of  lifts to 
$\Aut(G)$ of a homomorphism 
$a:\Gamma\to \Out(G)$, in terms of {\bf group cohomology}. 

Let $\Gamma$ be a group and  $B$ 
another group acted on by  $\Gamma$ via a homomorphism
$\theta:\Gamma\to \Aut(B)$, that is, 
every  $\gamma\in \Gamma$ defines an automorphism of $B$ that
we denote by $\theta_\gamma$.
A $1$-{\bf cocycle} 
of $\Gamma$ with values in $B$ 
is a map $\beta:\Gamma\to B$ such that
\begin{equation}\label{cocycle}
\beta_1=1\andd \beta_{\gamma\gamma'}=\beta_\gamma\theta_\gamma(\beta_{\gamma'})\;\; \mbox{for every}\;\;
\gamma,\gamma'\in \Gamma,
\end{equation}
where $\beta_{\gamma}$ denotes the image of $\gamma$.
The set of 1-cocycles is denoted $Z^1_\theta(\Gamma,B)$.
Two cocycles $\beta$ and $\beta'\in Z^1_\theta(\Gamma,B)$ are  said to be  {\bf cohomologous}
if there exists $b\in B$ such that
\begin{equation}\label{eq-cohomologous}
\beta'_\gamma=b^{-1}\beta_\gamma \theta_\gamma(b).
\end{equation}
 This is an equivalence relation in $Z^1_\theta(\Gamma,B)$ and the quotient is 
denoted by $H^1_\theta(\Gamma,B)$. This is the {\bf first cohomology set} of
$\Gamma$ with values in $B$. 

Coming back to our problem, let $S_a$ be the set of homomorphisms  $\theta:\Gamma\to \Aut(G)$ lifting $a:\Gamma\to \Out(G)$, i.e. fitting in the commutative diagramme
\begin{equation}\label{eq-lift-a}
    \begin{tikzcd}
\Aut(G)\arrow[r]  & \Out(G)\\
  & \Gamma\arrow[lu,dotted,"\theta"]\arrow[u,"a"]
\end{tikzcd}.
\end{equation}
It is well known that, if $G$ is a connected compact real Lie group, $S_a$ is non-empty --- see, for example, \cite{BGGM}. This is also true when $G$ is a connected reductive complex Lie group, since then it is a complexification of a compact Lie group.

If we fix one element $\theta\in S_a$ then every other lift $\theta'$ is equal to $\beta\theta$ for some map $\beta:\Gamma\to\Int(G)$. For every $\gamma$ and $\gamma'\in\Gamma$ one has
\begin{equation*}
    \beta_{\gamma\gamma'}\theta_{\gamma\gamma'}=
    \theta'_{\gamma\gamma'}=
    \theta'_{\gamma}\theta'_{\gamma'}=
    \beta_{\gamma}\tg\beta_{\gamma'}\theta_{\gamma'}=
    \beta_{\gamma}\tg(\beta_{\gamma'})\theta_{\gamma\gamma'},
\end{equation*}
where by abuse of notation we regard $\theta$ as an automorphism of $\Int(G)$. Comparing the first and last terms we conclude that $\beta\in Z^1_{\theta}(\Gamma,\Int(G))$. Conversely, such a 1-cocycle provides a lift. Summing up, the following holds. 

\begin{lemma}\label{lemma-lifts-vs-non-abelian-cohomology}
    Given a lift $\theta$ of $a$, there is an $\Int(G)$-equivariant bijection
\begin{equation}\label{eq-bijection-Sa-cocycles}
    \{\text{Lifts of $a$}\}\leftrightarrow Z^1_{\theta}(\Gamma,\Int(G));\,\beta\theta\mapsto\beta,
\end{equation}
where the action on the left hand side is given by conjugation and the action on the right hand side is given by (\ref{eq-cohomologous}).
In particular, it induces a bijection
\begin{equation}\label{eq-bijection-Sa-cohomology-pairs}
    \{\text{Lifts of $a$}\}/\Int(G)\leftrightarrow H^1_{\theta}(\Gamma,\Int(G)).
\end{equation}

\end{lemma}

\subsection{Automorphisms of a semisimple complex Lie group}\label{section-Gtheta}
Let $G$ be a connected semisimple complex Lie group with (finite) centre $Z$. Take a finite group $\Gamma$ and a homomorphism
$$\theta:\Gamma\to\autg;\,\gamma\mapsto\tg.$$
Note that the corresponding $\Gamma$-action on $G$ preserves $Z$.

We define
$$\gt:=\{g\in G\suhthat\tg(g)=g\forevery\gamma\in\Gamma\}\le G$$
and
$$\gs:=\{g\in G\suhthat\tg(g)=z(\gamma,g)g\,\forevery\gamma\in\Gamma\;\text{and some}\;z(\gamma,g)\in Z\}\le G.$$
The group $\Gamma$ clearly acts (trivially) on $\gt$ and, since it acts on $Z\le\gs$, it also acts on $\gs$. There is an exact sequence of groups
\begin{equation}\label{eq-exact-seq-groups}
    1\to \gt\to\gs\xrightarrow{\cct} \zt,
\end{equation}
where the last homomorphism sends $g\in\gs$ to the map
$$\Gamma\to Z;\,\gamma\mapsto g^{-1}\tg(g)=\tg(g)g^{-1}=z(\gamma,g)\in Z.$$
The set $\zt$ of 1-cocycles is defined in Section \ref{section-lifts-and-non-abelian-cohomology}, and in this context it has a group structure induced by the group structure on $Z$.
In particular, $\gt$ is a normal subgroup of $\gs$.
To see why $\cct$ is well defined note that, if $\gamma$ and $\gamma'$ are elements of $\Gamma$, $g\in \gs$ and $\cct(g,\gamma)\in Z$ denotes the image of $g$ evaluated at $\gamma$, then
\begin{align*}
    \cct(g,\gamma\gamma')g=\theta_{\gamma\gamma'}(g)=\tg\left(\theta_{\gamma'}(g)\right)=
\tg(\cct(g,\gamma')g)=\tg(\cct(g,\gamma'))\tg(g)&=\tg(\cct(g,\gamma'))\cct(g,\gamma)g\\&=\cct(g,\gamma)\tg(\cct(g,\gamma'))g.
\end{align*}
The exactness of (\ref{eq-exact-seq-groups}) implies that $\cct$ factors through the quotient 
$$\gamt:=\gs/\gt$$
via an injective homomorphism
$\gamt\hookrightarrow\zt.$
In particular, the finiteness of $\zt$ --- which follows from both $\Gamma$ and $Z$ being finite --- implies that $\gs$ is a finite extension of $\gt$, hence the reductiveness of $\gt$ \cite[Section 3, Proposition 3.6]{onishchik3} is inherited by $\gs$.

On the other hand, if $\fun{A}{B}$ denotes the set of maps from a set $A$ to a set $B$, there is a natural group isomorphism
\begin{equation}\label{eq-iso-cohomology}
    H^1(X,\fun{\Gamma}{Z})\xrightarrow{\sim}\fun{\Gamma}{H^1(X,Z)},
\end{equation}
where the group structures on both sides are induced by that of $Z$.
To define this isomorphism we use the isomorphism 
\begin{equation}\label{eq-iso-rep-fundamental-cohomology}
    H^1(X,A)\xrightarrow{\sim}\homm{\pi_1(X)}{A}
\end{equation}
that exists for any finite abelian group $A$, and let (\ref{eq-iso-cohomology}) be the map induced by the natural isomorphism
$$\homm{\pi_1(X)}{\fun{\Gamma}{Z}}\xrightarrow{\sim}\fun{\Gamma}{\homm{\pi_1(X)}{Z}}.$$
One can see that (\ref{eq-iso-cohomology}) restricts to an isomorphism 
\begin{equation}\label{eq-cohomology-Z-iso-Z-cohomology}
H^1(X,\zt)\xrightarrow{\sim} Z^1_{\theta}(\Gamma,H^1(X,Z)).    
\end{equation}

Recall that, given a complex Lie group $H$, the set of equivalence classes of \v{C}ech 1-cocycles on $X$ with values on the sheaf $\underline{H}$ of holomorphic functions to $H$ is in natural bijection with the set of isomorphism classes of holomorphic $H$-bundles. We denote it by $H^1(X,\underline{H})$. It has a distinguished element --- representing the trivial bundle ---, which gives $H^1(X,\underline{H})$ the structure of a pointed set. Using this notation, the homomorphism $\cct$ induces a morphism of pointed sets
$$H^1(X,\underline{\gs})\to H^1(X,\zt)$$
by extension of structure group.
Composing with (\ref{eq-cohomology-Z-iso-Z-cohomology}), we obtain a morphism of pointed sets
$$\ctt:H^1(X,\underline{\gs})\to Z^1_{\theta}(\Gamma,H^1(X,Z)).$$
Since $\ctt$ factors through $\gamt$, there is a commutative triangle
\begin{equation*}
    \begin{tikzcd}
    H^1(X,\underline{\gs})\arrow[r,"\ctt"]\arrow[d]&Z^1_{\theta}(\Gamma,H^1(X,Z))\\
    H^1(X,\gamt)\arrow[ur].
\end{tikzcd}
\end{equation*}
Moreover, the diagonal map is injective, since it is a composition of an isomorphism and the homomorphism induced by the embedding $\gamt\hookrightarrow Z^1_{\theta}(\Gamma,Z)$. This induced homomorphism is also injective because of (\ref{eq-iso-rep-fundamental-cohomology}), which we can apply because $Z^1_{\theta}(\Gamma,H^1(X,Z))$ is abelian.

The group $\gt$ is connected when $G$ is simply connected and the image of the homomorphism $\theta$ is cyclic \cite[Section 8]{steinberg-endomorphisms-of-linear-algebraic-groups}. However, it is not connected in general even when $G$ is simply connected. For this reason, we need to "refine" (\ref{eq-exact-seq-groups}) using the connected component $\gt_0$ of $\gt$. Due to the fact that $\gs$ is a finite extension of $\gt$ by (\ref{eq-exact-seq-groups}), $\gt_0$ is also the connected component of $\gs$. There is thus an extension
\begin{equation}\label{eq-extension-connected-component}
    1\to\gt_0\to\gs\xrightarrow{\pt}\gamtt\to1,
\end{equation}
where $\gamtt:=\gs/\gt_0$ is a finite group because $\gs$ is reductive. Of course there is a natural surjective homomorphism $\gamtt\to\gamt:=\gs/\gt,$
which induces a morphism $H^1(X,\gamtt)\to H^1(X,\gamt)$. We denote by $\qt$ the composition

\begin{equation}\label{eq-def-qt}
\begin{tikzcd}[ar symbol/.style = {draw=none,"#1" description}]
    H^1(X,\gamtt)\arrow[rrr,bend right=15,"\qt", swap]\arrow[r] & H^1(X,\gamt)\ar[r,hook] & H^1(X,\zt)\arrow[r] & Z^1_{\theta}(\Gamma,H^1(X,Z)).
\end{tikzcd}
\end{equation}

\subsection{Finite extensions of a semisimple complex Lie group}\label{section-extension}

Let $G$ be a connected reductive complex Lie group with (maybe infinite) centre $Z$, and let $\Gamma$ be a finite group. Because of the appearance of non-connected reductive structure groups in the study of fixed points, we need a description of the finite extensions of $\Gamma$ by $G$ --- see \cite{BGGM}. These are reductive complex Lie groups $\hat G$ fitting in a short exact sequence
\begin{equation}\label{eq-extensions}
    1\to G\to \hat G\to\Gamma\to 1.
\end{equation}
There is an equivalence relation on the set of extensions of $\Gamma$ by $G$ which relates two extensions given by groups $\hat G$ and $\hat G'$ if and only if there is an isomorphism $\hat G\xrightarrow{\sim} \hat G'$ fitting in a commutative diagramme
$$
\begin{tikzcd}[ar symbol/.style = {draw=none,"#1" description,sloped},
  isomorphic/.style = {ar symbol={\cong}},
  equals/.style = {ar symbol={=}},
  ]
1\arrow[r]  & G \arrow[r]\ar[equal]{d} & \hat G \arrow[r]\ar[d,"\sim" labl] & \Gamma \arrow[r]\ar[equal]{d} & 1\\
1\arrow[r]  & G \arrow[r] & \hat G' \arrow[r] & \Gamma \arrow[r] & 1
\end{tikzcd}.
$$
To each equivalence class of extensions of $\Gamma$ by $G$ we may associate a characteristic homomorphism 
$$a:\Gamma\to\Out(G);\,\gamma\mapsto \ag.$$
Fix such a homomorphism $a$. We want to parametrize the equivalence classes of extensions with characteristic homomorphism $a.$

Since the action of $\Int(G)$ on $Z$ is trivial, $a$ induces a homomorphism $a:\Gamma\to\Aut(Z).$
In this context we define a \textbf{2-cocycle} of $\Gamma$ with values in $Z$ as a map $c:\Gamma\times\Gamma\to Z$
satisfying
\begin{equation}\label{eq-2-cocycle}
    c(1,\gamma)=c(\gamma,1)=1\andd\ag(c(\gamma',\gamma''))c(\gamma,\gamma'\gamma'')=c(\gamma\gamma',\gamma'')c(\gamma,\gamma')
\end{equation}
for every $\gamma$, $\gamma'$ and $\gamma''$ in $\Gamma$. There is an equivalence relation on the set $Z^2_{a}(\Gamma,Z)$ of 2-cocycles, where two 2-cocycles $c$ and $c'$ are related if and only if there is a map
$$\beta:\Gamma\to Z$$
such that
$$c'(\gamma,\gamma')=c(\gamma,\gamma')\ag(\beta(\gamma'))\beta(\gamma)\beta(\gamma\gamma')^{-1}.$$
The set of equivalence classes $H^2_{a}(\Gamma,Z)$ is the \textbf{second cohomology set} of $\Gamma$ with values in $Z$.
If $\theta:\Gamma\to\Aut(G)$ is a homomorphism lifting $a$, we sometimes write $Z^2_{\theta}(\Gamma,Z)$ and $H^2_{\theta}(\Gamma,Z)$ instead of $Z^2_{a}(\Gamma,Z)$ and $H^2_{a}(\Gamma,Z)$, respectively.

\begin{definition}\label{def-twisted-product}
Given a 2-cocycle $c\in Z^2_{a}(\Gamma,Z)$ and a lift $\theta$ of $a$, we define the \textbf{$(\theta,c)$-twisted product of $G$ by $\Gamma$}, written $G\times_{(\theta,c)}\Gamma$, to be the group that is equal to $G\times\Gamma$ as a set and has multiplication
\begin{equation}\label{eq-def-twisted-product}
    (g,\gamma)(g',\gamma')=(g\tg(g')c(\gamma\gamma'),\gamma\gamma')
\end{equation}
for every $g$ and $g'$ in $G$ and every $\gamma$ and $\gamma'$ in $\Gamma$.
\end{definition}

\begin{remark}
     The equivalence class of the extension
$$    1\to G\to G\times_{(\theta,c)}\Gamma\to\Gamma\to 1$$
only depends on $a$ and the class of $c$ in $H^2_{a}(\Gamma,Z)$. More precisely, given a map $s:\Gamma\to G$ such that $\Int_s\in Z^1_{\theta}(\Gamma,\Int(G))$, there exists a 2-cocycle $c_s\in Z^2_a(\Gamma,Z)$ such that $G\times_{(\theta,c)}\Gamma\cong G\times_{(\Int_s\theta,c_sc)}\Gamma$ as extensions --- recall that $\Int_s\theta$ is a homomorphism by Lemma \ref{lemma-lifts-vs-non-abelian-cohomology}. The 2-cocycle $c_s$ is given by
\begin{equation}\label{eq-def-cs}
    c_s(\gamma,\gamma')=s(\gamma)\tg(s(\gamma'))s(\gamma\gamma')^{-1}.
\end{equation}
See \cite[Section 2.4]{BGGM} for more details.
\end{remark}

The main statement in this section is the following. 

\begin{proposition}\label{prop-extensions-isomorphic-twisted-group}
For every extension $\hat G$ of $G$ by $\Gamma$ with characteristic morphism $a$ and every lift $\theta$ of $a$, there exists a 2-cocycle $c\in Z^2_{a}(\Gamma,Z)$ such that the extensions of $G$ given by $\hat G$ and $G\times_{(\theta,c)}\Gamma$ are equivalent. We may further choose the lift $\theta$ so that the image of $c$ is in any given $\Gamma$-invariant maximal compact subgroup of $Z$.
\end{proposition}
\begin{proof}
Take a section $t:\Gamma\to \hat G$ of (\ref{eq-extensions}) whose composition with the natural homomorphism $\hat G\to\Int(\hat G)$ restricts to $\theta$. Every $g\in \hat G$ can be uniquely written in the form $g_0t_{\gamma}$, where $\gamma\in\Gamma$ is the connected component where $g$ lies and $g_0\in G$. This determines a map $\hat G\to G\times \Gamma$. We can then show that this map determines an isomorphism of extensions $\hat G\cong G\times_{(\theta,c)}\Gamma$ for some map $c:\Gamma\times\Gamma\to Z$. Using associativity of the group multiplication of $\hat G$ it can be seen that $c\in Z^2_{a}(\Gamma,Z)$.

The second assertion follows by choosing a maximal compact subgroup $K$ of $\hat G$ and a lift $\theta:\hat G\to \Aut(G)$ preserving its connected component, together with a section $t$ whose image lies in $K$.
\end{proof}

\subsection{Restrictions of the adjoint representation}\label{section-adjoint-representation} 
Let $G$ be a semisimple complex Lie group and $\lieg$ be its Lie algebra. Take a homomorphism $\theta:\Gamma\to\Aut(G)$, where $\Gamma$ is any finite group. We consider the restriction of the adjoint representation
$$G\to\GL(\lieg)$$
to the subgroups $\gt$ and $\gs$, defined in Section \ref{section-Gtheta}. Given a character
$$\mu:\Gamma\to \C^*;\,\gamma\mapsto \mu_{\gamma},$$
we may consider the $\mu$-weight subspace of $\lie g$, given by
$$\liegm:=\{v\in\lieg\suhthat \tg(v)=\mu_{\gamma} v\forevery\gamma\in\Gamma\}.$$
One can see that it is preserved by the adjoint action of $\gs$: for every $g\in\gs$, $\gamma\in\Gamma$ and $v\in\lieg^{\theta}_{\mu}$, one has
$$\tg\Ad_g(v)=\Ad_{\tg(g)}(\tg(v))=\Ad_{\cct(g,\gamma) g}(\mug v)=\mug\Ad_{g}(v),$$
where $\cct(g,\gamma)$ is some element of $Z$.
Abusing notation, we denote by $\Ad:\gs\to\GL(\liegm)$ the restriction of the adjoint representation.

The following lemma is a crucial ingredient to prove Proposition \ref{prop-polystability-extension-structure-group}.

\begin{lemma}\label{lemma-compact-involution}
Given a homomorphism $\theta:\Gamma\to\Aut(G)$, there exists a compact conjugation $\sigma$ of $G$ preserving each connected component of $\gs$ such that $\gs^{\sigma}$ is a maximal compact subgroup of $\gs$ and, for every character $\mu:\Gamma\to\C^*$,
$\sigma(\lieg^{\theta}_{\mu})=\lieg^{\theta}_{\omu}.$ 
\end{lemma}
\begin{proof}
Let $\liegt:=\sum_{\mu\in \Hom(\Gamma,\C^*)}\liegm$. This is a subalgebra of $\lieg$, since $[\liegm,\lieg^{\theta}_{\mu'}]=\lieg^{\theta}_{\mu\mu'}$ for every pair of homomorphisms $\mu$ and $\mu'\in\Hom(\Gamma,\C^*)$. In fact $\liegt$ is the subalgebra $\lieg^{C}$ of fixed points of $\lieg$ under the action of the commutator $C:=[\theta(\Gamma),\theta(\Gamma)]=\langle\{\theta_{\gamma\gamma'\gamma^{-1}\gamma'^{-1}}\}_{\gamma,\gamma'\in\Gamma}\rangle$. Indeed, note that, for each triple $\gamma,\gamma'$ and $\gamma''\in\Gamma$ and an element $v\in\lieg^C$, one has
$$\theta_{\gamma\gamma'\gamma^{-1}\gamma'^{-1}}\theta_{\gamma''}(v)=\theta_{\gamma''}\theta_{\gamma\gamma'\gamma^{-1}\gamma'^{-1}}(v)=\theta_{\gamma''}(v),$$
so that $\theta(\Gamma)$ acts on $\lieg^C$. The elements of the group of automorphisms $\theta(\Gamma)\vert_{\lie g^C}< \Aut(\lie g^C)$ can be simultaneously diagonalizable, since they commute and have finite order --- in particular, they are semisimple ---, thus giving a decomposition as in the definition of $\liegt$. This shows that $\lieg^C\subseteq\liegt$, and the reverse inclusion is clear. 

By \cite[Proposition 3.6, Chapter 3]{onishchik3}, $\lie g^C$ is reductive. The restriction $\theta(\Gamma)\vert_{\lie g^C}$ of $\theta(\Gamma)$ to $\lie g^C$ is a finite abelian group, therefore it has a decomposition $\theta(\Gamma)\vert_{\lie g^C}=\langle\gamma_1\rangle\times\dots\times\langle\gamma_l\rangle$, where each $\gamma_i$ is an element of $\theta(\Gamma)\vert_{\lie g^C}$ and $\langle\gamma_i\rangle$ is the cyclic subgroup generated by it. Let $\Gamma_i:=\langle\gamma_1\rangle\times\dots\times\langle\gamma_i\rangle$, for each $1\le i\le l$ --- in particular, $\lie g^C_l=\liegt$. Again by \cite[Proposition 3.6, chapter 3]{onishchik3}, all the subalgebras $\lie g^C_i:=\lie g^C\cap\lie g^{\Gamma_i}\subset\lie g^C$ are reductive. Set $\lie g^C_0:=\lie g^C$. Note that $\Ad(\gs)$ commutes with $\theta(\Gamma)$, since
\begin{equation*}
    \tg\Ad_g=\Ad_{\tg(g)}\tg=\Ad_{\cct(g,\gamma)g}\tg=\Ad_{g}\tg
\end{equation*}
for every $\gamma\in\Gamma$ and $g\in\gs$ --- recall that $\cct(g,\gamma):=\tg(g)g^{-1}\in Z$. In particular, $\Ad(\gs)$ preserves $\lie g^C_i$ for each $0\le i\le l$.

Choose a Cartan subalgebra $\lie t^{\theta}\subset \lieg^{\theta}$. It is well known that there exists a lift $\tau:\gamtt\to\Aut(\gt_0)$ of the characteristic homomorphism of \ref{eq-extension-connected-component} which preserves $\lie t^{\theta}$. By Proposition \ref{prop-extensions-isomorphic-twisted-group}, there exists a $2$-cocycle $c\in Z^2_{\tau}(\gamtt,Z(\gt_0))$ such that $\gs\cong\gt_0\times_{(\tau,c)}\gamtt$ as extensions. Moreover, we may assume that the image of $c$ is in the maximal compact subgroup 
$K_{Z(\gt_0)}$ of $Z(\gt_0).$ Throughout this proof we denote by $(1,\delta)$ the element of $\gs$ corresponding to $(1,\delta)\in \gt_0\times_{(\tau,c)}\gamtt$ for each $\delta\in\gamtt$, by abuse of notation.

By the proof of \cite[Theorem 4.5(a)]{borel-invariant-cartan-commuting-automorphisms}, $\lie t^{\theta}$ contains a regular element $v_{l-1}$ of $\lie g^C_{l-1}$. Therefore the centralizer of $\lie t^{\theta}$ in $\lie g^C_{l-1}$, which coincides with the centralizer $Z_{\lie g^C_{l-1}}(v_{l-1})$ of $v_{l-1}$, is a Cartan subalgebra of $\lie g^C_{l-1}$, which we denote by $\lie t^C_{l-1}$. Moreover, since $\lie t^{\theta}$ is preserved by $\tau(\gamtt)$ and $\theta_{\gamma_l}$, its centralizer $\lie t^C_{l-1}\subset \lie g^C_{l-1}$ is preserved by both the automorphism $\theta_{\gamma_l}$ and the family of automorphisms $\{\Ad_{(1,\delta)}\}_{\delta\in\gamtt}$. Again by the proof of \cite[Theorem 4.5(a)]{borel-invariant-cartan-commuting-automorphisms}, $\lie t^C_{l-1}$ contains a regular element $v_{l-2}$ of $\lie g^C_{l-2}$, and so its centralizer $\lie t^C_{l-2}\subset \lie g^C_{l-2}$ is a Cartan subalgebra of $\lie g^C_{l-2}$. The automorphisms $\theta_{\gamma_{l-1}}$ and $\theta_{\gamma_l}$ and the family of automorphisms $\{\Ad_{(1,\delta)}\}_{\delta\in\gamtt}$ preserve $\lie t^C_{l-2}$, since they preserve $\lie t^C_{l-1}$. Continuing this argument we construct a Cartan subalgebra $\lie t^C\subset\lie g^C$ invariant by the action of $\Gamma$ and the family of automorphisms $\{\Ad_{(1,\delta)}\}_{\delta\in\gamtt}$. Using $\lie t^C$, we define the compact involution $\sigma$ as follows.

Let $\Lambda$ be the set of roots of $\lie g^C$, given by the adjoint action of $\lie t^C$, and choose a system of positive roots $\Lambda^+$. Consider the root space decomposition
\begin{equation}\label{eq-root-space-decomposition-g^C}
    \lie g^C=\lie t^C\oplus\bigoplus_{\lambda\in\Lambda^+}\lie g_{\lambda}^C\oplus \lie g_{-\lambda}^C.
\end{equation}
Let $G^C_0$ be the connected reductive subgroup of $G$ with lie algebra $\lie g^C$. Recall that there is a family of compact conjugations $\sigma$ of $G^C_0$ associated with $\lie t$. Indeed, we may first define a holomorphic involution as follows: the reductive Lie algebra $\lie g^C$ can be described using $\mathfrak{sl}_2$-triples $(x_{\lambda},t_{\lambda},x_{-\lambda})$, where $\lambda$ is a positive root, $t_{\lambda}\in\lie t$ and $x_{\lambda}\in\lie g^C_{\lambda}$. These triples are unique up to the action of $\C^*$ such that $c\in\C^*$ sends $(x_{\lambda},t_{\lambda},x_{-\lambda})$ to $(cx_{\lambda},t_{\lambda},c^{-1}x_{-\lambda})$. Once these $\mathfrak{sl}_2$-triples are chosen, there is a unique holomorphic involution of $\lie g^C$ which sends $(x_{\lambda},t_{\lambda},x_{-\lambda})$ to $(-x_{-\lambda},-t_{\lambda},-x_{\lambda})$. Composing with the antiholomorphic involution which fixes $x_{\lambda}$ and $t_{\lambda}$ for every root $\lambda$, we obtain a compact conjugation which we denote by $\sigma$. 

We now claim that, for every $\gamma\in\Gamma$ and every positive root $\lambda\in\Lambda^+$, we may choose the $\mathfrak{sl}_2$-triples $(x_{\lambda},t_{\lambda},x_{-\lambda})$ so that $\tg\vert_{\lie g^C}$ and the family of automorphisms $\{\Ad_{(1,\delta)}\vert_{\lie g^C}\}_{\delta\in\gamtt}$ commute with $\sigma$. Once we have proved this, for every $v\in\liegm$ and every $\gamma\in\Gamma$,
\begin{equation*}
\tg\sigma(v)=\sigma\tg(v)=\sigma\mug(v)=\overline{\mug}\sigma(v),
\end{equation*}
which proves that $\sigma(\liegm)=\lieg^{\theta}_{\omu}$. Moreover, the group generated by the elements $(1,\delta)$ is contained in $K_{Z(\gt_0)}\times_{(\tau,c)}\gamtt$, where $K_{Z(\gt_0)}$ is the maximal compact subgroup of $Z(\gt_0)$. Since the family $\{\Ad_{(1,\delta)}\vert_{\lie g^C}\}_{\delta\in\gamtt}$ commutes with $\sigma$, it preserves the maximal compact subgroup $K^C:=(G^C_0)^{\sigma}$ of $G^C_0$. Therefore, the subgroup of $G$ generated by $K^C$ and the elements $(1,\delta)$ for $\delta\in\gamtt$ is equal to $\bigcup_{\delta\in\gamtt}K^C(1,\delta)$, which is compact. By construction, it contains the maximal compact subgroup $\bigcup_{\delta\in\gamtt}K^{\theta}(1,\delta)$ of $\gs$, where $K^{\theta}:=(\gt)^{\sigma}$. Extending this to a maximal compact subgroup of $G$, so that $\sigma$ is extended to a compact involution of $G$, finishes the construction.

In order to prove the claim, consider the compact group $H:=\Gamma\times (K_{Z(\gt_0)}\times_{(\tau,c)}\gamtt)$. The action of $\Gamma$ via $\theta$ and the adjoint action of $K_{Z(\gt_0)}\times_{(\tau,c)}\gamtt\subset \gs$ on $\lie g^C$ commute, therefore they combine into an action of $H$ on $\lie g^C$ preserving $\lie t^C$. This action must commute the roots. Hence, for each root $\lambda$, each element $x_{\lambda}$ of the root space $\lie g^C_{\lambda}$ and each $h\in H$, the automorphism determined by $h$ sends the element $x_{\lambda}$ to $k_{h,\lambda}x_{h(\lambda)}$, where $k_{h,\lambda}\in\C^*$ and $h(\lambda)$ is the image of $\lambda$ under the automorphism induced by $h$. Moreover, since $h(t_{\lambda})=t_{h(\lambda)}$, then
\begin{equation*}
    t_{h(\lambda)}=h([x_{\lambda},x_{-\lambda}])= [k_{h,\lambda}x_{h(\lambda)},k_{h,-\lambda}x_{-h(\lambda)}],
\end{equation*}
and so $k_{h,-\lambda}=k_{h,\lambda}^{-1}$. Since $H$ is compact, whenever $h(\lambda)=\lambda$, $k_{h,\lambda}\in\U(1)$. Therefore, we may choose the triples $(x_{\lambda},t_{\lambda},x_{-\lambda})$ so that $k_{h,\lambda}\in \U(1)$. Indeed, we may choose a representative $\lambda$ in each $H$-orbit and pick $x_{h(\lambda)}$ to be any element in $\U(1)h(x_{\lambda})$ for each $h\in H$. With these choices,
\begin{align*}
    \sigma(h(x_{\lambda}))&=\sigma(k_{h,\lambda}x_{h(\lambda)})
    \\&=\overline{k_{h,\lambda}}\sigma(x_{h(\lambda)})
    \\&=
    -k_{h,\lambda}^{-1}x_{-h(\lambda)}
    \\&=-k_{h,-\lambda}x_{h(-\lambda)}
    \\&=h(\sigma(x_{\lambda})),
\end{align*}
which shows that the restrictions of $H$ and $\sigma$ to $\bigoplus_{\lambda\in\Lambda^+}\lie g_{\lambda}^C\oplus \lie g_{-\lambda}^C$ commute. It is clear that they also commute on $\lie t^C$. Therefore, by (\ref{eq-root-space-decomposition-g^C}), they commute on the whole $\lie g^C$ as required.

\end{proof}

\section{Higgs pairs and twisted equivariant structures}\label{section-twisted-equivariant}

Throughout this section $G$ is a connected reductive complex Lie group with Lie algebra $\lie g$, and $\Gamma$ is a finite group.

\subsection{Twisted \texorpdfstring{$(G,\Gamma)$}{(G,Gamma)}-actions}\label{section-twisted-G-Gamma-action}

Choose a category $\mathcal{C}$, e.g. sets with maps, topological spaces with continuous maps, smooth manifolds with smooth maps or algebraic varieties with algebraic morphisms. Assume that there is notion of $G$-action in this category, for example in the case of sets we may use the forgetful functor that sends an algebraic group into the underlying abstract group. 

Take a homomorphism $\theta:\Gamma\to\Aut(G)$. Let $Z\subset G$ be an abelian subgroup --- for example, the centre --- and let $c\in Z^2_{a}(\Gamma,Z)$ be a 2-cocycle, defined as in Section \ref{section-extension}. Following \cite[Section 2.3]{BGGM}, we give the following definition.

\begin{definition}\label{def-twisted-(G,Gamma)}
    Let $M$ be an object of $\mathcal{C}$. A \textbf{right $(\theta,c)$-twisted $(G,\Gamma)$-action} on $M$ is a right $G$-action on $M$, together with a map
    \begin{equation}\label{eq-twisted-action-M}
        M\times\Gamma\to M;\,(m,\gamma)\mapsto m\bullet\gamma
    \end{equation}
    satisfying \begin{equation}\label{eq-twisted-equivariant-axioms}
(mg)\bullet\gamma=(m\bullet\gamma)\theta_{\gamma^{-1}}(g)\andd(m\bullet\gamma)\bullet\gamma'=(mc(\gamma,\gamma'))\bullet(\gamma\gamma')
\end{equation}
for every $m\in M$, $g\in G$ and $\gamma$ and $\gamma'\in\Gamma$. The object $M$ equipped with the twisted action is called a $(\theta,c)$-twisted $(G,\Gamma)$-equivariant object. If the $G$-action is implicit, the map (\ref{eq-twisted-action-M})
is also called a $(\theta,c)$-twisted $\Gamma$-action on $M$.

A \textbf{left} $(\theta,c)$-twisted $(G,\Gamma)$-action on $M$ is a $G$-action, together with a set of automorphisms of $M$ labeled by $\Gamma$, whose inverses determine a right $(\theta,c)$-twisted $(G,\Gamma)$-action on $M$. Unless otherwise stated, we always work with right twisted actions in this paper.

A \textbf{morphism} between $(\theta,c)$-twisted $(G,\Gamma)$-equivariant objects $M$ and $M'$ is a morphism $M\to M'$ such that the diagramme
  \begin{equation*}
      \begin{tikzcd}
        M\arrow[r,"\bullet\gamma"]\arrow[d] & M\arrow[d]\\
        M'\arrow[r,"\bullet\gamma"] & M'
      \end{tikzcd}
  \end{equation*}
  commutes for each $\gamma\in\Gamma$.
\end{definition}

\subsection{Twisted equivariant \texorpdfstring{$G$}{G}-bundles}
\label{section-twisted-equivariant-bundle} 

This section follows \cite[Section 3.1]{BGGM}.
Let $X$ a compact Riemann surface with canonical bundle $K_X$ and let $\Gamma$ be a finite group equipped with a homomorphism 
\begin{equation*}
    \eta:\Gamma\to\Aut(X);\,\gamma\mapsto\etag.
\end{equation*}
Note that $\eta$ induces a right $\Gamma$-action on $X$, namely
$x\cdot\gamma:=\etag^{-1}(x)$ for each $x\in X$ and $\gamma\in\Gamma$.
Take two homomorphisms $a:\Gamma\to\Out(G)$ and $\theta:\Gamma\to\Aut(G)$ such that $\theta$ is a lift of $a$. Let $Z\subset G$ be an abelian subgroup and let $c\in Z^2_{a}(\Gamma,Z)$ be a 2-cocycle, defined as in Section \ref{section-extension}.

\begin{definition}\label{def-twisted-equivariant-bundles}
A \textbf{$(\theta,c)$-twisted $\Gamma$-equivariant $G$-bundle} over $X$ is a $G$-bundle $E$ equipped with a $(\theta,c)$-twisted $\Gamma$-action compatible with its action on $X$, i.e. fitting in the commutative diagramme
  \begin{equation*}
      \begin{tikzcd}
        E\arrow[r,"\bullet\gamma"]\arrow[d] & E\arrow[d]\\
        X\arrow[r,"\cdot\gamma"] & X
      \end{tikzcd}
  \end{equation*}
  for each $\gamma\in\Gamma$. The twisted action $\bullet$ is called a (right) \textbf{$(\theta,c)$-twisted $\Gamma$-equivariant action} on $E$. An isomorphism of $(\theta,c)$-twisted $\Gamma$-equivariant $G$-bundles is an isomorphism of $G$-bundles which is also an isomorphism of $(\theta,c)$-twisted $(G,\Gamma)$-equivariant objects, according to Section \ref{section-twisted-G-Gamma-action}.

  Given a $(\theta,c)$-twisted $\Gamma$-equivariant $G$-bundle $E$ over $X$ and an element $\gamma$ in the centre $Z(\Gamma)$ of $\Gamma$, there is an induced $(\theta,c)$-twisted $\Gamma$-equivariant structure on $\gamma^*E$ so that each $\gamma'\in\Gamma$ sends $\gamma^*e\in\gamma^*E$ to $\gamma^*(e\bullet\gamma')$. This descends to the same $\Gamma$-action on $X$, since $\gamma$ commutes with $\gamma'$. We define a \textbf{$Z(\Gamma)$-isomorphism} between two twisted equivariant bundles $E$ and $E'$ to be an isomorphism of twisted equivariant $G$-bundles $E\to\gamma^*E'$ for some $\gamma\in Z(\Gamma)$. 
\end{definition}

\begin{remark}\label{remark-theta-theta'-twisted}
    By \cite[Proposition 2.16]{BGGM}, the category of $(\theta,c)$-twisted $\Gamma$-equivariant $G$-bundles and the category of $(\Int_s\theta,c_sc)$-twisted $\Gamma$-equivariant $G$-bundles are equivalent whenever $s:\Gamma\to G$ is a map such that $\Int_s\in Z^1_{\theta}(\Gamma,\Int(G))$. Here $c_s$ is given by (\ref{eq-def-cs}).
\end{remark}

\begin{remark}\label{remark-exact-sequence-aut-no-higgs}
Let $E$ be a $G$-bundle over $X$. Let $\Aut(E)$ be the group of automorphisms of $E$ covering the 
identity of $X$, and let 
$\Aut_{\Gamma,\eta,\theta}(E)$ be the group of biholomorphic maps 
$f:E\to E$, so that $f$ covers the automorphism $\eta^{-1}_\gamma:X\to X$ for some $\gamma\in \Gamma$ and satisfies that $f(eg)=f(e)\tg^{-1}(g)$ for each $e\in E$. Given $f$ and $f'\in \Aut_{\Gamma,\eta,\theta}(E)$, we define their group product by $ff'=f'\circ f$.
There is  an exact
sequence of groups
\begin{equation}\label{exact-aut-no-higgs}
1\to \Aut(E)\to \Aut_{\Gamma,\eta,\theta}(E) \to \Gamma,
\end{equation}
where the group multiplication on $\Gamma$ is transposed, i.e. the product of $\gamma$ and $\gamma'\in\Gamma$ is $\gamma'\gamma$.
A $(\theta,c)$-twisted $\Gamma$-equivariant structure on $E$ is simply
a $c$-twisted lift of (\ref{exact-aut-no-higgs}), i.e. a map $\Gamma\to \Aut_{\Gamma,\eta,\theta}(E)$ satisfying the second equation of (\ref{eq-twisted-equivariant-axioms}) for $M=E$.
\end{remark}

\subsection{Twisted equivariant Higgs pairs}\label{section-twisted-equivariant-higgs-pairs}

This Section follows \cite[Section 3.2]{BGGM} and \cite{oscar-suratno}. We keep the notation of Section \ref{section-twisted-equivariant-bundle}. Let $V$ be a complex vector space equipped with a complex representation
$$\rho:G\to\GL(V).$$
Given a $G$-bundle $E$ over $X$, there is an associated vector bundle $E(V):=E\times_{\rho}V$. Let $K_X$ be the canonical bundle of $X$.

\begin{definition}\label{def-higgs-pair}
A \textbf{$(G,V)$-Higgs pair over $X$} is a pair $(E,\phi)$, where $E$ is a holomorphic principal $G$-bundle and $\phi$ is a holomorphic section of $E(V)\otimes K_X$. Given two $(G,V)$-Higgs pairs $(E,\phi)$ and $(F,\psi)$ over $X$, a morphism
$(E,\phi)\to(F,\psi)$
is a holomorphic homomorphism of $G$-bundles from $E$ to $F$ whose induced morphism
$E(V)\otimes K_X\to F(V)\otimes K_X$
sends $\phi$ to $\psi$.
\end{definition}

Consider a finite group $\Gamma$ equipped with a homomorphism $\eta:\Gamma\to \Aut(X)$. Take a homomorphism 
$$\theta:\Gamma\to\Aut(G);\,\gamma\mapsto\tg$$
and a 2-cocycle $c\in Z^2_{\theta}(\Gamma,Z)$, defined as in Section \ref{section-extension}. Fix a map $\rhog:\Gamma\to\GL(V)$. We call $\rhog$ a \textbf{$(\theta,c)$-twisted representation} if the pair $(\rho,\rhog)$ determines a left $(\theta,c)$-twisted $(G,\Gamma)$-action on $V$, according to Definition \ref{def-twisted-(G,Gamma)}.
If $\theta$ or $c$ is trivial, we may omit it from the notation. We sometimes denote the induced right $(\theta,c)$-twisted $\Gamma$-action on $V$ by
\begin{equation*}
    V\times\Gamma\to V;\,(v,\gamma)\mapsto v\bullet\rhog(\gamma):=\rhog(\gamma)^{-1}v.
\end{equation*}

\begin{remark}
    Note that $\rhog$ is not an actual group representation in general, but it does induce a linear representation $(\rho,\rhog):G\times_{\theta,c}\Gamma\to\GL(V)$.
\end{remark}

Let $E$ be a $(\theta,c)$-twisted $\Gamma$-equivariant $G$-bundle over $X$. The vector bundle $E(V)\otimes K_X$ is then $\Gamma$-equivariant with $\Gamma$-action defined for each $\gamma\in\Gamma$ by
\begin{equation}\label{eq-def-action-on-higgs-field}
    E(V)\otimes K_X\ni(e,v)\otimes k\mapsto (e\bullet\gamma,v\cdotv\gamma)\otimes(\etag^*k).
\end{equation}

\begin{remark}\label{remark-associated-equivariant}
    In general, given a $(\theta,c)$-twisted $\Gamma$-equivariant $G$-bundle $E$ over $X$ and a manifold $M$ equipped with a left $(\theta,c)$-twisted $(G,\Gamma)$-action, the associated fibre bundle $E(M)\to X$ inherits a genuine $\Gamma$-equivariant action, given by $(e,m)\cdot\gamma:=(e\bullet\gamma,\rhog(\gamma)^{-1}m)$, where $e\in E$, $m\in M$, $\gamma\in\Gamma$ and $\rhog$ denotes the twisted $\Gamma$-action on $M$ --- see \cite[Proposition 3.16]{BGGM}. 
\end{remark}

\begin{definition}\label{def-twisted-equivariant-higgs-pairs}
A \textbf{$(\theta,c,\rhog)$-twisted $\Gamma$-equivariant $(G,V)$-Higgs pair over $X$} is a $(G,V)$-Higgs pair $(E,\phi)$ over $X$ equipped with a $(\theta,c)$-twisted $\Gamma$-equivariant action on $E$ such that $\phi$ is invariant with respect to the $\Gamma$-action (\ref{eq-def-action-on-higgs-field}). We sometimes write $(E,\bullet,\phi)$ to emphasize the twisted action $\bullet$.
If $V=\lie g$, $\rho=\Ad$ and $\rhog=\theta$, we say that $(E,\bullet,\phi)$ is a \textbf{$(\theta,c)$-twisted $\Gamma$-equivariant $G$-Higgs bundle}, thus omitting $\rhog$ from the notation.

There is a notion of $Z(\Gamma)$-isomorphism as in Section \ref{section-twisted-equivariant-bundle}, namely a $Z(\Gamma)$-isomorphism of twisted equivariant bundles sending the first Higgs field to the second one. 
\end{definition}

\begin{remark}\label{remark-exact-sequence-aut}
Remark \ref{remark-exact-sequence-aut} is also valid for Higgs pairs if we impose that all the morphisms involved preserve the Higgs field. Indeed, let $(E,\phi)$ be a $(G,V)$-Higgs pair.  Let $\Aut(E,\phi)$ be the group of automorphisms of $(E,\phi)$ covering the 
identity of $X$, and let 
$\Aut_{\Gamma,\eta,\theta}(E,\phi)$ be the group of biholomorphic maps 
$f:E\to E$ preserving $\phi$ --- according to some twisted representation $\rhog$ of $\Gamma$ on $V$ ---, so that $f$ covers the automorphism $\eta^{-1}_\gamma:X\to X$ for some $\gamma\in \Gamma$ and satisfies that $f(eg)=f(e)\tg^{-1}(g)$ for each $e\in E$. If we equip $\Aut_{\Gamma,\eta,\theta}(E,\phi)$ with the same group structure as in Remark \ref{remark-exact-sequence-aut}, then there is  an exact
sequence of groups
\begin{equation}\label{exact-aut}
1\to \Aut(E,\phi)\to \Aut_{\Gamma,\eta,\theta}(E,\phi) \to \Gamma,
\end{equation}
where the group multiplication on $\Gamma$ is transposed.
A $(\theta,c,\rhog)$-twisted $\Gamma$-equivariant structure on $E$ may be regarded as
a $c$-twisted lift of (\ref{exact-aut}), i.e. a map $\Gamma\to \Aut_{\Gamma,\eta,\theta}(E,\phi)$ satisfying the second equation of (\ref{eq-twisted-equivariant-axioms}) for $M=E$.
\end{remark}

By Remark \ref{remark-theta-theta'-twisted} we may assume that the lift $\theta$ is chosen so that it preserves a maximal compact subgroup $K$ of $G$ with Lie algebra $\lie k$. We may define notions of (poly, semi)stability by changing slightly Definition \ref{def-stability-higgs-bundle}: first note that, if $s$ is in the $\Gamma$-invariant part $i\lie k^{\Gamma}$ of $i\lie k$, then the corresponding parabolic subgroup $P_s$ is $\Gamma$-invariant and so is its Levi subgroup $L_s$, since
\begin{equation}\label{eq-def-levi}
    L_s=\{g\in G\suhthat \lim_{t\to\infty} e^{ts}ge^{-ts}=g\}.
\end{equation}
Therefore, $\theta$ induces $(\theta,c)$-twisted $(G,\Gamma)$-actions on $G/P_s$ and $P_s/L_s$, where $G$ acts by left multiplication and $\Gamma$ acts via $\theta$ --- note however that the respective actions of the centre $Z$ are both trivial, since $Z$ is contained in $L_s\subset P_s$, and so this is actually a twisted action with trivial 2-cocycle. If $E$ is a $(\theta,c)$-twisted $\Gamma$-equivariant bundle, then there is a $\Gamma$-equivariant action on the fibre bundle $E(G/P_s)$ \cite[Section 3.2]{BGGM}. This lets us introduce the corresponding space of $\Gamma$-invariant reductions of structure group $H^0(X,E(G/P_s))^{\Gamma}$. Given such a reduction $\sigma$, the corresponding $P_s$-bundle $E_{\sigma}$ inherits a $(\theta,c)$-twisted $\Gamma$-equivariant action, since $Z$ is contained in $P_s$. Moreover, since $Z$ is contained in $L_s$, there is also a $\Gamma$-equivariant associated bundle $E_{\sigma}(P_s/L_s)$ with a corresponding space of $\Gamma$-invariant reductions $H^0(X,E_{\sigma}(P_s/L_s))^{\Gamma}$.

For each $s\in i\lie k$ we may define
$$V_s:=\{v\in V\suhthat \rho(e^{ts})v\,\text{remains bounded as}\;t\to\infty\}$$
and
\begin{equation*}
    V^0_s=\{v\in V\suhthat \lim_{t\to\infty} \rho(e^{ts})v=v\}.
\end{equation*}
Given a reduction $\sigma\in H^0(X,E(G/P_s))$, we may define a subbundle $E(V)_{\sigma,s}:=E_{\sigma}\times_{P_s}V_s\subseteq E(V)$. Given a further reduction $\sigma'\in H^0(X,E_{\sigma}(P_s/L_s))$, there is also an associated subbundle $E(V)^0_{\sigma',s}:=E_{\sigma'}\times_{L_s}V^0_s\subseteq E(V)_{\sigma,s}$.

\begin{definition}\label{definition-stable-twisted-equivariant}
A $(\theta,c,\rhog)$-twisted $\Gamma$-equivariant $(G,V)$-Higgs pair $(E,\phi)$ over $X$ is:
\begin{itemize}
    \item \textbf{Equivariantly semistable} if $\deg E(\sigma,s)\ge 0$ for any $s\in i\lie k^{\Gamma}$ and any reduction of structure group $\sigma\in H^0(X,E(G/P_s))^{\Gamma}$ such that $\phi\in H^0(X,E(V)_{\sigma,s}\otimes K_X)$.
    
    \item \textbf{Equivariantly stable} if $\deg E(\sigma,s)> 0$ for any $s\in i\lie k^{\Gamma}$ and any reduction of structure group $\sigma\in H^0(X,E(G/P_s))^{\Gamma}$ such that $\phi\in H^0(X,E(V)_{\sigma,s}\otimes K_X)$.
    
    \item \textbf{Equivariantly polystable} if it is equivariantly semistable and, provided that $\deg E(\sigma,s)=0$ for some $s\in i\lie k^{\Gamma}$ and a reduction $\sigma\in H^0(X,E(G/P_s))^{\Gamma}$ with $\phi\in H^0(X,E(V)_{\sigma,s}\otimes K_X)$, there is a further reduction of structure group $\sigma'\in H^0(X,E_{\sigma}(P_s/L_s))^{\Gamma}$ with $\phi\in H^0(X,E(V)^0_{\sigma',s}\otimes K_X)$.
\end{itemize}
\end{definition}

\begin{remark}
    In general, the stability notions for twisted equivariant Higgs pairs depend on a parameter $\zeta\in i\zk^{\Gamma}$, where $\zk$ is the centre of $\lie k$. However, in this paper we are only concerned with the case $\zeta=0$.
\end{remark}

\begin{remark}
    When $V=\lie g$, $\rho$ is the adjoint representation of $G$ on $\lie g$ and $\rhog=\theta$, then $E(V)_{\sigma,s}=E_{\sigma}(\lie p_s)$ and $E(V)^0_{\sigma',s}=E_{\sigma'}(\lie l_s)$ in Definition \ref{definition-stable-twisted-equivariant}. 
\end{remark}

We consider the moduli space $\cM(X,G,\Gamma,\theta,c,V,\rhog)$ classifying isomorphism classes of equivariantly polystable $(\theta,c,\rhog)$-twisted $\Gamma$-equivariant $(G,V)$-Higgs pairs. 
If $\Gamma$ is trivial, this coincides with the moduli space of $(G,V)$-Higgs pairs over $X$, which we denote by $\cM(X,G,V)$. If $V=\lie g$, $\rho$ is the adjoint representation of $G$ on $\lie g$ and $\rhog=\theta$, we obtain the moduli space of \textbf{$(\theta,c)$-twisted $\Gamma$-equivariant $G$-Higgs bundles}, which we write $\cM(X,G,\Gamma,\theta,c)$. 
Given a character 
\begin{equation*}
    \mu:\Gamma\to\C^*;\,\gamma\mapsto\mug
\end{equation*}
of $\Gamma$, there is an alternative $(\theta,c)$-twisted action of $\Gamma$ on $\lie g$ given by $\rhomu:=\omu_{\gamma}\theta$. It is clear that the following categories are equivalent:
\begin{enumerate}
    \item The category of \textbf{$(\theta,c,\rhomu)$-twisted $\Gamma$-equivariant $G$-Higgs bundles}.
    \item The category of \textbf{$(\theta,c,\mu)$-twisted $\Gamma$-equivariant $G$-Higgs bundles}, whose objects are pairs $(E,\phi)$ consisting of a $G$-bundle $E$ equipped with a $(\theta,c)$-twisted $\Gamma$-equivariant action $\bullet$ and a Higgs field $\phi\in H^0(X,E(\lie g)\otimes K_X)$ fitting in the following diagramme:
    $$
    \begin{matrix}\label{higa}
    E(\mathfrak{g})\otimes K_X & \stackrel{(\bullet,\tg^{-1})\otimes\etag^{*}}{\longrightarrow} & 
    E(\mathfrak{g})\otimes K_X\\
    ~\Big\uparrow\varphi && ~\,\text{  }~\,\text{ }\Big\uparrow \mug\varphi\\
    X & \stackrel{\etag^{-1}}{\longrightarrow} & X
    \end{matrix},
    $$
    and whose morphisms are $\Gamma$-equivariant homomorphisms of $G$-bundles sending one Higgs field to the other. Here, by abuse of notation, $(\bullet,\tg^{-1})$ denotes the equivariant group action of $\Gamma$ on $E(\lie g)$.
\end{enumerate}
Via this equivalence of categories, we may construct a moduli space $\cM(X,G,\Gamma,\theta,c,\mu)$ of $(\theta,c,\mu)$-twisted $\Gamma$-equivariant $G$-Higgs bundles.

The notion of (poly, semi)stability for $(\theta,c,\rhog)$-twisted $\Gamma$-equivariant $(G,V)$-Higgs pairs restricts to a corresponding notion for $(\theta,c)$-twisted $\Gamma$-equivariant $G$-bundles, which may be regarded as Higgs bundles with zero Higgs field.
There is a moduli space classifying isomorphism classes of equivariantly polystable $(\theta,c)$-twisted $\Gamma$-equivariant $G$-bundles over $X$, which we denote by $M(X,G,\Gamma,\theta,c)$.

\subsection{Twisted equivariant Higgs pairs and Hitchin equations}\label{section-twisted-equivariant-higgs-pairs-and-hitchin-equations}

With notation as in Section \ref{section-twisted-equivariant-higgs-pairs} let $K\subset G$ be a $\Gamma$-invariant maximal compact subgroup
of $G$ with Lie algebra $\lie k$, where $\Gamma$ acts on $G$ via $\theta$. {\color{red}Choose an embedding $G\hookrightarrow\GL(N,\C)$ such that $K$ is embedded in $U(N)$, and take the induced $G$-invariant hermitian metric on $\lie g$, which restricts to an isomorphism $\lie k\cong\lie k^*$.} Let $(E,\phi)$ be a $(\theta,c,\rhog)$-twisted $\Gamma$-equivariant $(G,V)$-Higgs pair on $X$. Let $h\in\Omega^0(X,E(G/K))^{\Gamma}$ be a $\Gamma$-equivariant smooth reduction of structure group of $E$ from $G$
to $K$, where $\Gamma$ acts equivariantly on $E(G/K)$ by Remark \ref{remark-associated-equivariant}. Let $F_h$ be the curvature of  the corresponding Chern--Singer connection. Choose a hermitian metric $h_V$ on $V$ such that $\rho(K)$ is contained in the group $U(V)$ of unitary automorphisms of $V$.
 Let $\sigma_{h,h_V}:\Omega^{1,0}(X,E(V))\to \Omega^{0,1}(X,E(V))$
be the $\C$-antilinear map defined by the metrics $h$ and $h_V$, and the conjugation 
between $(1,0)$ and $(0,1)$-forms on $X$. Let $\rho^*:E_h(\lie u(V))^*\to E_h(\lie k)^*\cong E_h(\lie k)$ be the dual of the infinitesimal action of $K$ on $V$ given by $\rho$, where $\lie u(V)$ is the Lie algebra of $U(V)$. Let $\Lambda:\Omega^2(X)\to\Omega^0(X)$ be the adjoint of wedging with the K\"ahler of volume 1 on $X$.
Consider the equation
\begin{equation}\label{hitchin-equation-pairs}
\Lambda F_h-\rho^*(\frac{i}{2}\phi\otimes\sigma_{h,h_V}(\phi))=0,
\end{equation}
where we identify $\frac{i}{2}\phi\otimes\sigma_{h,h_V}(\phi)$ as a skew symmetric section of $\End(E(V)\otimes K)^*=\End(E(V))^*$, hence a section of $E_h(\lie u(V))^*$. 
The following Hitchin--Kobayashi correspondence holds --- see \cite{hitchin1987,simpson,PBI}). 
\begin{theorem}\label{EH1-equivariant}
Let $(E,\bullet,\phi)$ be a $(\theta,c,\rhog)$-twisted $\Gamma$-equivariant $(G,V)$-Higgs pair on $X$.
Then $(E,\phi)$ is equivariantly polystable if and only $E$ 
admits a $\Gamma$-invariant reduction of structure group $h$ to $K$ satisfying (\ref{hitchin-equation-pairs}). 
\end{theorem}

\begin{remark}\label{remark-hitchin-eqs-equivalent}
    If $V=\lie g$, $\rho$ is the adjoint representation of $G$ and $h_V=h$, then (\ref{hitchin-equation-pairs}) is equivalent to (\ref{hitchin-equation}).
\end{remark}

\subsection{Local structure at isotropy points}\label{section-isotropy}

Following previous sections, take homomorphisms $\eta:\Gamma\to \Aut(X)$ and $\theta:\Gamma\to\Aut(G)$, and a 2-cocycle $c\in Z^1_{\theta}(\Gamma,Z)$.
Let $x\in X$, and \[\Gamma_{x}:=\{\gamma\in \Gamma \mid \eta_{\gamma}(x)=x\}\] be the corresponding isotropy subgroup. 
Let $\PPP:=\{x\in X \mid \Gamma_{x}\neq \{1\}\}$.

It is well-known that, when $\Gamma$ acts on $X$ faithfuly and properly 
discontinuously,
$\PPP$ consists of a finite number of points $\{x_{1},\cdots,x_{r}\}$ 
and, for each $x_{i}\in \PPP$, 
the isotropy subgroup $\Gamma_{x_i}\subset \Gamma$ is cyclic
\cite[Section III, Proposition 3.1]{miranda}. Each $x_i\in\PPP$ is called an \textbf{isotropy point} of $X$.

Choose a complex representation $\rho:G\to\GL(V)$ and a $(\theta,c)$-twisted representation $\rhog:\Gamma\to\GL(V)$ --- defined in Section \ref{section-twisted-equivariant-higgs-pairs}. Let $(E,\bullet,\phi)$ be a $(\theta,c,\rhog)$-twisted $\Gamma$-equivariant $(G,V)$-Higgs pair with bundle projection $p_E:E\to X$. Following \cite{oscar-suratno}, we study the local twisted $\Gamma$-action on the fibre of $E$ over each isotropy point. A study of these local actions for trivial 2-cocycle $c$ can be found in \cite{damiolini-local types}.

The underlying $(\theta,c)$-twisted $\Gamma$-equivariant
structure on $E$ determines the following. For each $x\in \PPP$ and $e\in E$ such that $p_E(e)=x$, there is a map 
\[\sigma_{e}:\Gamma_{x}\to G\]
defined by \[e\bullet \gamma=e\tg^{-1}(\sigma_{e}(\gamma))^{-1}.\]

Following \cite{oscar-suratno}
\begin{proposition}\label{local-data}
The following equations hold:

 \begin{enumerate}
  \item $\sigma_{e}(\gamma\gamma')=c(\gamma,\gamma')\sigma_{e}(\gamma)\theta_{\gamma}(\sigma_{e}(\gamma'))$\forevery $\gamma,\gamma'\in \Gamma_x$.
  \item $\sigma_{eg}(\gamma)=g^{-1}\sigma_{e}(\gamma)\theta_{\gamma}(g)$ 
  for all $\gamma\in \Gamma_x$ and $g\in G$.
 \end{enumerate}
\end{proposition}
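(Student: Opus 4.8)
The plan is to exploit the freeness of the principal $G$-action on $E$ together with the three defining properties of the twisted equivariant structure in Definition \ref{tstructure}, reducing both identities to a cancellation of $e$ after rewriting $\widetilde{\eta_\gamma}(e)$ in terms of $\sigma_e$. The map $\sigma_e$ is well defined because, for $\gamma\in\Gamma_x$, the biholomorphism $\widetilde{\eta_\gamma}$ covers $\eta_\gamma$ (property $(i)$) and $\eta_\gamma$ fixes $x$, so $\widetilde{\eta_\gamma}(e)$ lies in the same fibre $\pi^{-1}(x)$ as $e$; since $G$ acts freely and transitively on fibres, there is a unique $\sigma_e(\gamma)\in G$ with $\widetilde{\eta_\gamma}(e)=e\,\sigma_e(\gamma)$.

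For part $(1)$, first I would compute $\widetilde{\eta_{\gamma_1\gamma_2}}(e)$ via property $(iii)$, written in the right-action convention as $\widetilde{\eta_{\gamma_1}}(\widetilde{\eta_{\gamma_2}}(e))\,c(\gamma_1,\gamma_2)$. Substituting $\widetilde{\eta_{\gamma_2}}(e)=e\,\sigma_e(\gamma_2)$ and then applying property $(ii)$ to pull the group element through $\widetilde{\eta_{\gamma_1}}$ yields $\widetilde{\eta_{\gamma_1}}(e)\,\theta_{\gamma_1}(\sigma_e(\gamma_2))\,c(\gamma_1,\gamma_2)=e\,\sigma_e(\gamma_1)\,\theta_{\gamma_1}(\sigma_e(\gamma_2))\,c(\gamma_1,\gamma_2)$. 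Comparing with $\widetilde{\eta_{\gamma_1\gamma_2}}(e)=e\,\sigma_e(\gamma_1\gamma_2)$ and cancelling $e$ gives the stated formula, once I use that $c(\gamma_1,\gamma_2)\in Z$ is central and can be moved to the front.

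For part $(2)$, writing $e'=eg$ I would apply property $(ii)$ directly: $\widetilde{\eta_\gamma}(e')=\widetilde{\eta_\gamma}(eg)=\widetilde{\eta_\gamma}(e)\,\theta_\gamma(g)=e\,\sigma_e(\gamma)\,\theta_\gamma(g)$. On the other hand $\widetilde{\eta_\gamma}(e')=e'\,\sigma_{e'}(\gamma)=e\,g\,\sigma_{e'}(\gamma)$. Cancelling $e$ and solving for $\sigma_{e'}(\gamma)$ gives $\sigma_{e'}(\gamma)=g^{-1}\sigma_e(\gamma)\theta_\gamma(g)$.

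Both computations are routine; the only points requiring care are the bookkeeping forced by the right-action convention in property $(iii)$ and the correct placement of the central cocycle $c(\gamma_1,\gamma_2)$, which I would justify by its centrality in $G$. No genuine obstacle arises: the entire argument is a direct manipulation of the defining relations and the freeness of the $G$-action. The conceptual content of the proposition is that $\sigma_e$ is a $c$-twisted $1$-cocycle for the $\theta$-twisted action of $\Gamma_x$ on $G$ (reducing to the usual non-abelian cocycle condition when $c$ is trivial), and that part $(2)$ is precisely the coboundary relation, so the associated cohomology class is independent of the choice of $e$ in the fibre.
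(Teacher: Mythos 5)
Your proposal is correct and follows essentially the same route as the paper's proof: both parts are obtained by applying the defining relations of Definition \ref{tstructure} to the point $e$, rewriting everything in terms of $\sigma_e$, and cancelling $e$ using the freeness of the $G$-action, with the centrality of $c(\gamma_1,\gamma_2)$ handling the placement of the cocycle. Your added remarks on well-definedness and the cocycle/coboundary interpretation are consistent with how the paper uses the result immediately afterwards.
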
 
 \begin{proof}
  For each $e\in E$ the equation $e\bullet(\gamma\gamma')=e\theta_{\gamma\gamma'}^{-1}(\sigma_e(\gamma\gamma'))^{-1}$ holds by definition of $\sigma_e$, whereas 
   \begin{equation*}
\begin{split}
(e\bullet\gamma)\bullet\gamma' & = (e\theta_{\gamma}^{-1}(\sigma_e(\gamma))^{-1})\bullet\gamma'\\
 & = (e\bullet\gamma')\theta_{\gamma'}^{-1}(\theta_{\gamma}^{-1}(\sigma_e(\gamma)))^{-1} \\
 & = e\theta_{\gamma'}^{-1}(\sigma_e(\gamma'))^{-1}\theta_{\gamma\gamma'}^{-1}(\sigma_e(\gamma))^{-1}\\
 & = e\theta_{\gamma\gamma'}^{-1}(\sigma_e(\gamma)\theta_{\gamma}(\sigma_e(\gamma')))^{-1}   \\
\end{split}
\end{equation*}
By (\ref{eq-twisted-equivariant-axioms}) $(e\bullet(\gamma\gamma'))\theta_{\gamma\gamma'}^{-1}(c(\gamma,\gamma'))=(e\bullet\gamma)\bullet\gamma'$,
therefore
\[e\theta_{\gamma\gamma'}^{-1}(\sigma_e(\gamma\gamma'))^{-1}\theta_{\gamma\gamma'}^{-1}(c(\gamma,\gamma'))
=e\theta_{\gamma\gamma'}^{-1}(\sigma_e(\gamma)\theta_{\gamma}(\sigma_e(\gamma')))^{-1} \;\; \mbox{for every}\;\; e\in E.\]
Hence, 
$\sigma_e(\gamma\gamma')=c(\gamma,\gamma')\sigma_e(\gamma)\theta_{\gamma}(\sigma_e(\gamma'))$.

To check $(2)$, note that 
$$ eg\tg^{-1}(\sigma_{eg}(\gamma))^{-1}=(eg)\bullet\gamma=(e\bullet\gamma)\tg^{-1}(g)=
e\tg^{-1}(\sigma_e(\gamma))^{-1}\theta_{\gamma}^{-1}(g),
$$
hence $g\tg^{-1}(\sigma_{eg}(\gamma))^{-1}=\tg^{-1}(\sigma_e(\gamma))^{-1}\theta_{\gamma}^{-1}(g)$ and the equation follows after taking inverses, multiplying by $g$ on the right and applying $\tg$.
  \end{proof}

For each isotropy point $x\in X$ , let us denote by $Z^1_{\theta,c_x}(\Gamma_x,G)$ the set of all maps $\beta:\Gamma_{x}\to G$ satisfying 
$\beta(\gamma_1\gamma_2)=c_x(\gamma_1,\gamma_2)\beta(\gamma_1)\theta_{\gamma_1}(\beta(\gamma_2))$ for all $\gamma_1$ and $\gamma_2\in\Gamma$, 
where $c_x$ is the $2$-cocycle induced by the restriction of $c$ to $\Gamma_x$. 
We say that such a map $\beta$ is a a $c_x$-twisted $1$-cocycle for the action of $\Gamma_x$ on $G$ given by  $\theta$. 
Two twisted $1$-cocycles $\beta$ and $\beta'$ are related if there exists $g\in G$ 
such that $\beta'=g^{-1}\beta\theta_{\gamma}(g)$. 
We denote the set of all $c_x$-twisted $1$-cocycles modulo the above defined relation by $H^1_{\theta,c_x}(\Gamma_x,G)$.



Let $(E,\phi)$ be a $(\theta,c,\rhog)$-twisted $\Gamma$-equivariant $(G,V)$-Higgs pair. 
By Proposition \ref{local-data}, for each  $x\in \PPP$ there is an associated equivalence class $\sigma_x$ of $c_x$-twisted $1$-cocycles, which only depends on the $\Gamma$-equivariant isomorphism class of 
$E$.

We fix a class $\sigma_{x_i}\in H^1_{\theta,c_{x_i}}(\Gamma_{x_i},G)$ for each  $x_{i}\in \PPP$ and write $\sigma:=\{\sigma_{x_i}\}_i$. We say that a 
$(\theta,c,\rhog)$-twisted $\Gamma$-equivariant 
$(G,V)$-Higgs pair $(E,\phi)$ has 
{\bf local type} $\sigma_{x_i}\in H^1_{\theta,c_x}(\Gamma_x,G)$ at an isotropy point $x_{i}\in X$ 
if the class of twisted $1$-cocycles induced by the 
$(\theta,c)$-twisted $\Gamma$-equivariant structure on $E$ 
is equal to the class $\sigma_{x_i}
$.

We define  $\cM(X,G,\Gamma,\theta,c,V,\rhog,\sigma)$ as 
the subvariety of the moduli space 
of twisted equivariant Higgs pairs $\cM(X,G,\Gamma,\theta,c,V,\rhog)$
with fixed local types $\sigma_{x_i}$, $i=1,\cdots, r$. When $V=\lie g$, $\rho$ is the adjoint representation and $\rhog=\theta$ (i.e., when we consider twisted equivariant Higgs bundles) we omit $V$ and $\rhog$. If we are considering the moduli space of $(\theta,c,\mu)$-twisted $\Gamma$-equivariant $G$-Higgs bundles --- defined in the last paragraphs of Section \ref{section-twisted-equivariant-higgs-pairs} ---, we write $\cM(X,G,\Gamma,\theta,c,\mu,\sigma)$.

Let $c,c'\in Z^2_\theta(\Gamma,Z)$ be two cohomologous $2$-cocycles, that is, there exists a map $s:\Gamma\to Z$ such that 
\begin{equation}\label{com}
c'(\gamma,\gamma')=c(\gamma,\gamma')s_{\gamma}\theta_{\gamma}(s_{\gamma'})s_{\gamma\gamma'}^{-1}\quad\text{for every}\;\gamma\;\text{and}\;\gamma'\in \Gamma. \end{equation}
Let $\sigma_{x_i}\in H^1_{\theta,c_{x_i}}(\Gamma_{x_i},G)$. Then $\sigma^s_{x_i}:=s^{-1}\sigma_{x_i}$ is an element of $ H^1_{\theta,c'_{x_i}}(\Gamma_{x_i},G)$, where we are also calling $s$ to its restriction to $\Gamma_{x_i}$ by abuse of notation. Indeed,
\begin{align*}
    s^{-1}_{\gamma\gamma'}\sigma_{x_i}(\gamma\gamma')
    &=s_{\gamma}\tg(s_{\gamma'})s^{-1}_{\gamma\gamma'}c(\gamma,\gamma')s_{\gamma}^{-1}\sigma_{x_i}(\gamma)\tg(s_{\gamma'})^{-1}\tg(\sigma_{x_i}(\gamma'))\\&
    =c'(\gamma,\gamma')\sigma_{x_i}^s(\gamma)\tg(\sigma_{x_i}^s(\gamma')),
\end{align*}
where we denote by $\sigma_{x_i}$ a representative of the comohology class by abuse of notation.

\begin{proposition}\label{cohom}
 If $c$ and $c'$ are
cohomologous cocycles in $Z^2_\theta(\Gamma,Z)$ with $c'(\gamma,\gamma')=c(\gamma,\gamma')s_{\gamma\gamma'}^{-1}s_{\gamma}\tg(s_{\gamma'})$, $\rhog^{s}:=\rho(s)\rhog$ and $\sigma^s=s^{-1}\sigma$,
$$
{\cM}(X,G,\Gamma,\theta,c,V,\rhog,\sigma)
\cong {\cM}(X,G,\Gamma,\theta,c',V,\rhog^{s},\sigma^s).
$$
\begin{proof}
 Let $(E,\bullet,\phi)\in {\cM}(X,G,\Gamma,\theta,c,V,\rhog,\sigma)$.
We can show that $(E,\phi)$ admits a $(\theta,c',\rhog^s)$-twisted $\Gamma$-equivariant structure 
with local 
types $\{\sigma^s_{x_i}\}$ namely define
$e*\gamma:=(es_{\gamma})\bullet\gamma$
 as in the proof of \cite[Proposition 2.16]{BGGM}. By the same proposition this provides a $(\theta,c',\rhog^{s})$-twisted $\Gamma$-equivariant action on $(E,\phi)$. Moreover,
\begin{align*}
    e*\gamma
    =(e\bullet\gamma)\tg^{-1}(s_{\gamma})
    =e\tg^{-1}(\sg^{-1}\sigma_{e}(\gamma))^{-1}
    =e\tg^{-1}(\sigma^s_{e}(\gamma))^{-1},
\end{align*}
hence the local type of $(E,*,\phi)$ is $\{\sigma^s_{x_i}\}$.
Similarly, we can construct an inverse.
\end{proof}
\end{proposition}

\subsection{Non-connected groups and twisted equivariant structures}\label{section-equivalence-of-categories}
This section and the next one follow \cite[Section 4]{BGGM}. There is a very explicit relation between principal bundles with non-connected structure group and twisted equivariant principal bundles with connected structure group which is crucial in the Prym--Narasimhan--Ramanan construction and we explain next. Let $G$ be the connected component of the identity of a reductive group $\hat{G}$, $Z$ the centre of $G$ and $\Gamma:=\hat{G}/G$ the group of connected components of $\hat{G}$. There is a short exact sequence
\begin{equation}\label{eq-general-extension}
    1\to G\to \hat{G}\to\Gamma\to 1.
\end{equation}
Let $a:\Gamma\to \Out(G)$ be the characteristic homomorphism of (\ref{eq-general-extension}) and pick a lift $\theta:\Gamma\to \Aut(G)$. 

By Proposition \ref{prop-extensions-isomorphic-twisted-group}, there exists a 2-cocycle $c\in Z^2_{a}(\Gamma,Z)$ such that the extensions of $G$ given by $\hat{G}$ and $G\times_{(\theta,c)}\Gamma$ are equivalent. Abusing notation slightly, we consider the groups $\hat{G}$ and $G\times_{(\theta,c)}\Gamma$ to be equal. Let $E$ be a connected $\hat{G}$-bundle over $X$ and set $Y:=E/G$. This is a connected principal $\Gamma$-bundle over $X$, and hence it may be regarded as a connected étale cover
$\pa:\xa\to X$ with Galois group $\Gamma$. Assume that $E$ is connected, so that $Y$ is also connected.

Let $\rho:\hat{G}\to \GL(V)$ be a complex representation. By restriction we obtain a representation $\rho:G\to\GL(V)$. There is also a natural $(\theta,c)$-twisted representation $\rhog:=\rho\circ t:\Gamma\to\GL(V)$ --- see Section \ref{section-twisted-equivariant-higgs-pairs} for the definition of twisted representation ---, where the map $t:\Gamma\to\hat G$ sends $\gamma$ to $(1,\gamma)$.

\begin{proposition}[Higgs pair version of Proposition 4.5 in \cite{BGGM}]\label{prop-twisted-equivariant-bundles-one-to-one}
Let $Y\to X$ be a connected étale cover with Galois group $\Gamma$. The category of $(\hat{G},V)$-Higgs pairs $(E,\phi)$ over $X$ satisfying
$E/G\cong Y$
and the category of $(\theta,c,\rhog)$-twisted $\Gamma$-equivariant $(G,V)$-Higgs pairs over $\xa$ equipped with $Z(\Gamma)$-isomorphisms are equivalent --- see Definition \ref{def-twisted-equivariant-higgs-pairs}.
\end{proposition}
\begin{proof}[Sketch of the proof]
    Given a $\hat{G}$-bundle $E$ over $X$ such that $E/G\cong Y$, the morphism $E\to E/G\cong \xa$ is a $G$-bundle projection. Moreover, using the map $\Gamma\to\hat G$ sending $\gamma$ to $(1,\gamma)$, each element of $\Gamma$ is associated to a complex automorphism of the total space of $E$ which is clearly compatible with the $\Gamma$-bundle action on $Y$. This can be seen to be a $(\theta,c)$-twisted $\Gamma$-equivariant action.
\end{proof}

\begin{remark}
Note that the equivalence of categories is not true if we replace the category of twisted equivariant Higgs pairs with $Z(\Gamma)$-isomorphisms with the subcategory of twisted equivariant Higgs pairs with usual (fibre-preserving) isomorphisms, since an automorphism of a $\hat{G}$-bundle $E$ on $X$ may not induce the identity on $E/G$.
\end{remark}

Now assume that we are given an embedding $\hat{G}\hookrightarrow\GL(n,\C)$ such that $\rho:\hat{G}\to\GL(V)$ is a subrepresentation of the restriction of the adjoint representation $\Ad:\GL(n,\C)\to\GL(\gl(n,\C))$ to $\hat G$. Then, a $(\hat{G},V)$-Higgs pair $(E,\phi)$ over $X$ has an associated vector Higgs bundle $(E(\C^n),\phi)$, where $\phi$ is regarded as a section of $\ad E(\C^n)\otimes K_X$ via the embedding $E(V)\subset\End E(\C^n)$. Using this setup, Proposition \ref{prop-twisted-equivariant-bundles-one-to-one} may be stated in terms of associated Higgs (vector) bundles, as follows.

\begin{proposition}\label{prop-associated-bundle-equivariant}
Let $Y\to X$ be a connected étale cover with Galois group $\Gamma$.
Take a $(\hat{G},V)$-Higgs pair $(E,\phi)$ over $X$ such that $E/G\cong Y$. Let $(F,\bullet,\psi)$ be the corresponding $(\theta,c)$-twisted $\Gamma$-equivariant $G$-bundle over $\xa$ via Proposition \ref{prop-twisted-equivariant-bundles-one-to-one}, where $Y$ is regarded as an étale cover of $X$. Then there is an equivariant action of $\Gamma$ on the associated vector bundle $F(\C^n)$ preserving the corresponding Higgs field $\psi$, such that $(e,v)\bullet\gamma=(e\bullet\gamma,(1,\gamma)^{-1}v)$ for each $(e,v)\in F(V)$ and $\gamma\in\Gamma$. Moreover, 
$$(F(\C^n),\psi)/\Gamma\cong (E(\C^n),\phi).$$

Conversely, given a $\Gamma$-equivariant action on a Higgs vector bundle $(W,\psi)$ over $\xa$, there is an induced $\Gamma$-equivariant action $*$ on its bundle of frames $P$. For each $\gamma\in\Gamma$ there is an automorphism of $P$ given by
\begin{equation}\label{eq-dot-vs-*}
    p\bullet\gamma:=p*\gamma\bullet (1,\gamma)
\end{equation}
for each $p\in P$. The map
$$P\times\Gamma\to P;\,(p,\gamma)\mapsto p\bullet\gamma$$
is an $(\Int_{t},c)$-twisted $\Gamma$-equivariant action on $P$, where $t:\Gamma\to \hat{G}\subset \GL(n,\C)$ is the map sending $\gamma$ to $(1,\gamma)$. If $(W,\psi)=(F(\C^n),\psi)$ for some $(G,V)$-Higgs pair $(F,\psi)$, and $F\bullet\gamma=F\subset P$ for every $\gamma\in\Gamma$, the restriction of $\bullet$ to $(F,\psi)$ is a $(\theta,c)$-twisted $\Gamma$-equivariant action inducing --- via the construction of the previous paragraph --- the original equivariant action on $(W,\psi).$
\end{proposition}
\begin{proof}
Same as the proof of \cite[Proposition 2.9]{prym-narasimhan-ramanan}, taking account of the Higgs fields.
\end{proof}

\subsection{Twisted equivariant bundles and monodromy}\label{section-monodromy}

We keep the notation of the previous section but we do not assume that $Y$ or $E$ are connected. Let $E$ be a (not necessarily connected) $\hat G$-bundle over $X$. Let
$\pa:\xa\to X$ be a connected component of $E/G$. All the other connected components are isomorphic to $\xa$ via the $\Gamma$-action. Alternatively, $\xa$ is a subbundle of $E/G$ with minimal structure group, which we denote by $\gam$. Indeed, any two different connected reductions of structure group are in different components of $E/G$ because $\Gamma$ is finite. Hence, the image of $\pi_1(X)$ under any monodromy representation $\pi_1(X)\to\Gamma$ of $Y$ is a subgroup of $\Gamma$ conjugate to $\gam$.

There is a subgroup 
$\ga:=G\times_{(\theta,c)}\gam$ of $\hat{G}$, where we have also called $\theta$ and $c$ to their restrictions to $\gam$ and $\gam\times\gam$ respectively. There is a subbundle $E'\subset E$ with structure group $G_Y$, since $E/\ga = (E/G)/\gam$ and so the section of $H^0(X,(E/G)/\gam)$ corresponding to $Y$ may be regarded as a section of $H^0(X,E/\ga)$. Note that $E'/G$ is isomorphic to $\xa$. 

There is also a subvariety $\mdl_{Y}(X,\hat{G})$ of the moduli space $\mdl(X,\hat{G})$ consisting of $(\hat{G},V)$-Higgs pairs $(E,\phi)$ such that $E/G$ is isomorphic to the extension of structure group of $Y$ to $\Gamma$.

\begin{proposition}\label{prop-polystability-extension-of-structure-group-non-connected}
    Let $(E,\phi)$ be a $(\ga,V)$-Higgs pair over $X$ with extension of structure group $(E_{\hat{G}},\phi_{\hat{G}})$ to ${\hat{G}}$. Then $(E,\phi)$ is polystable if and only if $(E_{\hat{G}},\phi_{\hat{G}})$ is polystable. In particular, there exists an extension of structure group morphism $\mdl(X,\ga)\to\mdl(X,{\hat{G}})$.
 \end{proposition}
 \begin{proof}
     Assume that $(E,\phi)$ is polystable. First we prove that $(E_{\hat{G}},\phi_{\hat{G}})$ is semistable: consider a reduction of structure group $\sigma_{\hat{G}}\in H^0(X,E_{\hat{G}}({\hat{G}}/P_s))$, where $s\in i\lie k^{\Gamma}$. Since $s$ is $\Gamma$-invariant the parabolic subgroup $P_s\le {\hat{G}}$ intersects every connected component of ${\hat{G}}$, hence the fibre of the total space of the reduction $(E_{\hat{G}})_{\sigma_{\hat{G}}}$ over $x\in X$ intersects every connected component of $E_{\hat{G}}\vert_x$ and each intersection is isomorphic to $P_s\cap G$. Hence, since $E\vert_x$ is a union of connected components of $E_{\hat{G}}\vert_x$, its intersection with $P_s$ must be isomorphic to $\ga\cap P_s=P'_s$, where $P'_s:=\{g\in \ga\suhthat e^{ts}ge^{-ts}\,\text{remains bounded as}\;t\to\infty\}$ is the parabolic subgroup of $G_Y$ defined by (\ref{eq-def-Ps}). Therefore, the intersection of $(E_{\hat{G}})_{\sigma_{\hat{G}}}$ with $E$ is a reduction of structure group of $E$ to $P'_s\le P_s$, which we denote by $\sigma\in H^0(X,E(\ga/P'_s))$. Moreover,
     \begin{equation*}
         \deg E_{\hat{G}}(\sigma_{\hat{G}},s)=\deg E(\sigma,s)\ge 0,
     \end{equation*}
     where the inequality follows from semistability of $E$ and the equation follow from (\ref{eq-def-deg}) and the fact that a connection on $E$ induces a connection on $E_{\hat{G}}$ --- so that the corresponding curvatures are equal. 

     Now assume that $\phi_{\hat{G}}\in H^0(X,(E_{\hat{G}})_{
    \sigma_{\hat{G}}}(\lie p_s)\otimes K_X)$ and $\deg E_{\hat{G}}(\sigma_{\hat{G}},s)=0$ or, equivalently, $\phi\in H^0(X,E_{
    \sigma}(\lie p'_s)\otimes K_X)$ and $\deg E(\sigma,s)= 0$. By polystability of $E$ there is a further holomorphic reduction of structure group $\sigma'\in H^0(X,E_{\sigma}(P'_s/L'_s))$ such that $\phi\in H^0(X,E_{
    \sigma'}(\lie l'_s)\otimes K_X)$, where $L'_s$ is the Levi subgroup of $P'_s$. Hence the extension of structure group of $E_{\sigma'}$ to $L_s$ yields a reduction $\sigma'_{\hat{G}}\in H^0(X,(E_{{\hat{G}}})_{\sigma_{\hat{G}}}(P_s/L_s))$ such that $\phi_{\hat{G}}\in H^0(X,(E_{\hat{G}})_{
    \sigma'_{\hat{G}}}(\lie l_s)\otimes K_X)$, as required.

     The converse can be shown using the reverse argument: assume that $(E_{\hat{G}},\phi_{\hat{G}})$ is polystable. For semistability consider a reduction of structure group $\sigma\in H^0(X,E(\ga/P'_s))$ such that $\phi\in H^0(X,E_{
    \sigma}(\lie p'_s)\otimes K_X)$, where $s\in i\lie k^{\Gamma}$. Then, extending structure group, we obtain a reduction $\sigma_{\hat{G}}\in H^0(X,E_{\hat{G}}({\hat{G}}/P_s))$ such that $\phi_{\hat{G}}\in H^0(X,(E_{\hat{G}})_{
    \sigma_{\hat{G}}}(\lie p_s)\otimes K_X)$ and so we conclude that 
     \begin{equation*}
         \deg E(\sigma,s)=\deg E_{\hat{G}}(\sigma_{\hat{G}},s)\ge 0.
     \end{equation*}
     If equality holds, we may find a further reduction $\sigma'_{\hat{G}}\in H^0(X,(E_{\hat{G}})_{\sigma_{\hat{G}}}(P_s/L_s))$ such that $\phi_{\hat{G}}\in H^0(X,(E_{\hat{G}})_{
    \sigma'_{\hat{G}}}(\lie l_s)\otimes K_X)$, and the intersection of $(E_{\hat{G}})_{\sigma_{\hat{G}}}$ with $E$ yields a reduction $\sigma'\in H^0(X,E_{\sigma}(P'_s/L'_s))$ such  that $\phi\in H^0(X,E_{
    \sigma'}(\lie l'_s)\otimes K_X)$.
 \end{proof}

\begin{proposition}\label{prop-action-centralizer}
    Let $Z_{\Gamma}(\gam)$ be the centralizer of $\gam$ in $\Gamma$. Let $\isoc(\xa,G,\gam,\theta,c,V,\rhog)$ be the set of isomorphism classes of $(\theta,c,\rhog)$-twisted $\gam$-equivariant $(G,V)$-Higgs pairs over $\xa$.
    There is a natural left action of $Z_{\Gamma}(\gam)$ on $\isoc(\xa,G,\gam,\theta,c,V,\rhog)$ given as follows: take a $(\theta,c,\rhog)$-twisted $\gam$-equivariant $(G,V)$-Higgs pair $(E,\bullet,\phi)$ over $\xa$ and an element $z\in Z_{\Gamma}(\gam)$. 
    \begin{itemize}
        \item There is another $(G,V)$-Higgs pair $(\theta_z(E),\theta_z(\phi))$ given by extension of structure group.
        \item There is a natural $(\theta,c,\rhog)$-twisted $\gam$-action on $\theta_z(E)$ given by 
        \begin{equation}\label{eq-action-centralizer-on-twisted-equivariant}
            e*\gamma:=[ec(z^{-1},z)^{-1}c(z^{-1},\gamma)c(z^{-1}\gamma,z)]\bullet\gamma,
        \end{equation}
        leaving $\theta_z(\phi)$ invariant.
    \end{itemize}
    Via Proposition \ref{prop-twisted-equivariant-bundles-one-to-one}, this induces an action of $\cent$ on the set of isomorphism classes $H^1_{\xa}(X,\ga,V)$ of $(\ga,V)$-Higgs pairs $(E_{\ga},\phi_{\ga})$ over $X$ such that $E_{\ga}/G\cong \xa$, given by extension of structure group by $\Int_{(1,z)}$ for each $z\in \cent$.
\end{proposition}
\begin{proof}
By \cite[Proposition 2.10]{prym-narasimhan-ramanan}, the twisted equivariant bundle $(\theta_z(E),*)$ corresponds, via Proposition \ref{prop-twisted-equivariant-bundles-one-to-one}, to the $\ga:=G\times_{(\theta,c)}\gam$-bundle $\Int_{(1,z)}(E_{\ga})$ over $X$, where $E_{\ga}$ is the $G_Y$-bundle associated to $(E,\bullet)$. The induced Higgs field on $E_{\ga}$ is $\Int_{(1,z)}(\phi_{\ga})$, which corresponds to $\theta_z(\phi)$ on $E$. This proves the last statement of the proposition, and the rest follows from the fact that extension of structure group defines a left action of $\cent$ on the set of isomorphism classes of $(\ga,V)$-Higgs pairs.
\end{proof}

\begin{proposition}\label{prop-prym-narasimhan-ramanan}
Let $\cC_1$ be the category of $(\theta,c,\rhog)$-twisted $\gam$-equivariant $(G,V)$-Higgs pairs over $\xa$. Denote by $\cC_2$ the category of $G_Y$-bundles over $X$ whose quotient by $G$ is isomorphic to $Y$, and let $\cC_3$ be the category of ${\hat{G}}$-bundles over $X$. The composition of the equivalence of categories $\cC_1\to\cC_2$ of Proposition \ref{prop-twisted-equivariant-bundles-one-to-one} with the extension of structure group functor $\cC_2\to\cC_3$ provides a functor $\cC_1\to\cC_3$. With the notation of Proposition \ref{prop-action-centralizer}, this induces a bijection
\begin{equation}\label{eq-narasimhan-ramanan-iso}
    \isoc(\xa,{G},\gam,\theta,c,V,\rhog)/Z_{\Gamma}(\gam)\xrightarrow{\sim} H^1_{Y}(X,{\hat{G}},V),
\end{equation}
where  $H^1_{Y}(X,{\hat{G}},V)$ is the set of isomorphism classes of $({\hat{G}},V)$-Higgs pairs $(E,\phi)$ over $X$ such that $E/G$ is isomorphic to the extension of structure group of $Y$ to $\Gamma$. This restricts to a bijection
\begin{equation}\label{eq-narasimhan-ramanan}
    \mdl(\xa,{G},\gam,\theta,c,V,\rhog)/Z_{\Gamma}(\gam)\xrightarrow{\sim} \mdl_{Y}(X,{\hat{G}},V).
\end{equation}
\end{proposition}

\begin{proof}
Same as the proof of \cite[Proposition 2.11]{prym-narasimhan-ramanan} after taking account of the Higgs fields.
\end{proof}

\subsection{Non-abelian Hodge correspondence for twisted equivariant Higgs bundles}\label{section-non-abelian-hodge-twisted-equivariant}

In this section we assume that $\eta:\Gamma\to\Aut(X)$ is injective. An equivariant base point is a $\Gamma$-equivariant map $x:\Gamma \to X$, 
where $\Gamma$ acts on itself by multiplication, and on $X$ via $\eta$.
Suppose that $(X,x)$ has a universal $\Gamma$-equivariant covering 
$(\widehat{X},\widehat{x})\to (X,x)$. By the universal property of such 
covering, the group of automorphisms of the equivariant covering $\widehat{X}$
over $X$ is uniquely determined by $X$, up to unique isomorphism. This group is
called  the {\bf equivariant fundamental group} of $X$ with respect to 
the action of $\Gamma$ and is denoted by $\pi_1(X,\Gamma, x)$ or simply
$\pi_1(X,\Gamma)$  \cite[Definition 3.1]{Huis}. 

Let $1\in \Gamma$ be the identity, set $x_1:=x(1)$ and let
 $\pi_1(X,x_1)$ be the fundamental group
of $X$ with base point $x_1$.
By \cite[Proposition 3.2]{Huis}, $\pi_1(X,\Gamma,x)$ fits into an exact 
sequence
 \begin{equation}\label{fundu} 
1\to \pi_1(X,x_1)\to \pi_1(X,\Gamma,x)\to \Gamma\to 1.
\end{equation}
 Note that if $\Gamma$ acts trivially on $X$ --- i.e. if $\eta$ is trivial ---, 
then $\pi_1(X,\Gamma,x)=\pi_1(X,x)$, and if $\Gamma$ acts
 freely on $X$ then $\pi_1(X,\Gamma,x)=\pi_1(X/\Gamma,\bar{x})$, where 
$\bar{x}$ is 
the  composite map of $x$ and the quotient $X\to X/\Gamma$  \cite[Proposition 3.4]{Huis}. 

Choose $x\in X$ not fixed by any  $\gamma\in \Gamma$ with $\gamma\neq 1$.
 By the description of the equivariant fundamental group in terms of equivariant loops \cite[Section 6]{Huis}, we can 
 identify $\pi_1(X,\Gamma,x)$ with the set of all homotopy classes of maps $\sigma:[0,1]\to X$ such that $\sigma(0)=x$ and 
 $\sigma(1)\in \{\eta_{\gamma}(x)\mid \gamma\in \Gamma\}$. 
Under this identification, the surjective map 
 $p: \pi_1(X,\Gamma,x)\to \Gamma$ can be identified with 
$p(\sigma)=\gamma$ if 
$\sigma(1)=\eta_{\gamma}(x)$.
 
Let $G\times_{(\theta,c)}\Gamma$ be the twisted product given by Definition \ref{def-twisted-product}.
A representation $\widehat{\rho}$ of $\pi_1(X,\Gamma,x)$ on $ G\times_{(\theta,c)}\Gamma$ 
is said to be  {\bf $(\theta,c)$-twisted $\Gamma$-equivariant} 
if it is an extension of a representation 
$\rho\,:\, \pi_1(X,\, x_1)\,\longrightarrow\, G$ 
fitting  in a commutative diagramme
of homomorphisms

\begin{equation}\label{compatible-rep}
\begin{tikzcd}[ar symbol/.style = {
  equals/.style = {ar symbol={=}}}
  ]
0\arrow[r] &\pi_1(X,\,x_1)\arrow[r]\arrow[d,"\rho"] &\pi_1(X,\Gamma,x)\arrow[r]\arrow[d,"\widehat{\rho}"] &\Gamma\arrow[r]\ar[equals]{d}&0\\
0\arrow[r] &G\arrow[r] &G\times_{(\theta,c)}\Gamma\arrow[r] &\Gamma\arrow[r]&0.
\end{tikzcd}
\end{equation}
Denote by $\Hom(\pi_1(X,\Gamma,x), G\times_{(\theta,c)}\Gamma)$ the set of   
$(\theta,c)$-twisted $\Gamma$-equivariant representations 
$\widehat{\rho}\,:\,
\pi_1(X,\Gamma, x)\,\longrightarrow\, G\times_{(\theta,c)}\Gamma$. 

Let $\cM(X,G,\Gamma,\theta,c)$ be  the moduli space
of $(\theta,c)$-twisted $\Gamma$-equivariant $G$-Higgs bundles considered in Section 
\ref{section-twisted-equivariant-higgs-pairs}. 
Let $\calR(X,G,\Gamma,\theta,c)$ be the moduli space  
consisting of  
 $G$-conjugacy classes of elements of   
$\Hom(\pi_1(X,\Gamma,x), G\times_{(\theta,c)}\Gamma)$
whose restriction to $\pi_1(X,\,x_1)$ is reductive.
We then have the following Theorem \cite{PBI}.

\begin{theorem}[Non-abelian Hodge correspondence]\label{equivariant-nahc}
There is a homeomorphism between $\cM(X,G,\Gamma,\theta,c)$ 
and  $\calR(X,G,\Gamma,\theta,c)$.
\end{theorem}

\section{Fixed points in the absence of tensorization}\label{section-alpha-trivial}
Throughout this Section $G$ is a connected semisimple complex Lie group with centre $Z$ and Lie algebra $\lie g$, $X$ is a compact Riemann surface and $\Gamma$ is a finite subgroup of $\Aut(X)\times\Out(G)\times\C^*$. By Section \ref{section-action} there is an action of $\Gamma$ on $\cM(X,G)$. Let $\eta:\Gamma\to\Aut(X), a:\Gamma\to\Out(G)$ and $\mu:\Gamma\to\C^*$ be the homomorphisms induced by projection on each of the factors. Fix a homomorphism $\theta:\Gamma\to\Aut(G)$ lifting $a$. 

Let us denote by $\cM(X,G)^{\Gamma}$ the fixed-point subvariety of $\cM(X,G)$ for the action of $\Gamma$. The results of \cite{oscar-suratno} have been incorporated in this section in order to give a description of $\cM(X,G)^{\Gamma}$. The main result is Theorem \ref{main}, which contains our final description for the case where $\eta$ is injective and the $\Gamma$-action does not involve tensorization by elements of $H^1(X,Z)$. When $\eta$ is not injective, Theorem \ref{th-prym-narasimhan-ramanan-alpha-trivial-higgs} will provide a refinement of Theorem \ref{main} using the ideas developed in Section \ref{section-trivial-eta}.

\subsection{The forgetful morphism}
\label{section-forgetful-morphism}
With notation as above, fix a 2-cocycle $c\in Z^2_{a}(\Gamma,Z)$. Recall the notion of $(\theta,c,\mu)$-twisted $\Gamma$-equivariant $G$-Higgs bundle, given in Section \ref{section-twisted-equivariant-higgs-pairs}.

\begin{proposition}[Proposition 4.1 in \cite{oscar-suratno}]\label{underlyingsemi}
Let $(E,\bullet,\phi)$ be a $(\theta,c,\mu)$-twisted $\Gamma$-equivariant $G$-Higgs bundle.
\begin{enumerate}
  \item  If the underlying Higgs bundle $(E,\phi)$ is (semi)stable then $(E,\phi)$ is equivariantly (semi)stable.
  \item If $(E,\phi)$ is equivariantly semistable then $(E,\phi)$ is semistable.
  \item If  $(E,\phi)$ is equivariantly polystable 
        then $(E,\phi)$ is polystable.
 \end{enumerate}
\end{proposition}
\begin{proof}
The statement (1) follows directly from Definitions \ref{def-stability-higgs-bundle} and \ref{definition-stable-twisted-equivariant}. In order to prove (3), let $(E,\bullet,\phi)$ be an equivariantly polystable twisted equivariant $G$-Higgs bundle. Then, by Theorems \ref{EH1-equivariant} and \ref{EH1}, the underlying $G$-Higgs bundle is polystable.

Now we prove (2). Suppose $(E,\bullet,\phi)$ is equivariantly semistable
 but $(E,\phi)$ is not semistable. Note that, in characteristic $0$, $(E,\phi)$ is semistable if and only if 
 $(\ad(E),\ad(\phi))$ is semistable \cite[Lemma 2.10]{AB} --- hence $(\ad(E),\ad(\phi))$ is not semistable.
 Then there is a unique filtration of $(\ad(E),\ad(\phi))$ 
 by $\ad(\phi)$ invariant subbundles
 \[0=F_0\subset F_1\subset \cdots F_{n-1}\subset F_{n}=\ad(E)\] such that each $(F_{i}/F_{i-1},\ad(\phi)|_{F_i/F_{i-1}})$ is semistable and
 $\mu(F_{i}/F_{i-1})<\mu(F_{i-1}/F_{i-2})$, for all $i=1,2,\cdots,n$, called the Harder-Narasimhan filtration. Here $n$ is odd and $F_{n+1/2}$ is a parabolic subalgebra
 bundle of $\ad(E)$ --- see the proofs of \cite[Lemma 2.5]{bhp} and \cite[Lemma 2.11]{AB}.
 Moreover, as $(\ad(E),\ad(\phi))$ is not semistable, $\mbox{deg}(F_{n+1/2})> 0$. Let $\Ad(E):=E\times_G G$ be the group scheme associated 
 to $E$ for the adjoint action of $G$ on itself.
 By \cite[Lemma 4]{AAB} there exists a parabolic 
 subgroup scheme $\underline{P}\subset \Ad(E)$ such that the associated Lie algebra bundle is $F_{n+1/2}$. By Remark \ref{remark-associated-equivariant}, the twisted equivariant structure $\bullet$ on $(E,\phi)$ induces a $\Gamma$-equivariant structure on $(\ad(E),\ad(\phi))$, which we denote by `$\cdot$'. By uniqueness of the Harder-Narashimahn 
 filtration, $F_{n+1/2}\cdot\gamma=F_{n+1/2}$ for all $\gamma\in \Gamma$.
 One can also see that there exists a parabolic subgroup $P\subset G$ and a reduction of structure group $E_{\sigma}\subset E$ to $P$ such that 
 $\Ad(E_{\sigma})=\underline{P}$ --- see the proof \cite[Lemma 2.11]{AB} ---, hence in particular $F_{n+1/2}=E_{\sigma}(\mathfrak{p})$. But, since $F_{n+1/2}$ is $\Gamma$-invariant, so are $\sigma$ and $P$, and this would contradict our assumption that $(E,\bullet,\phi)$ is equivariantly semistable. Therefore, $(\ad(E),\mbox{ad}(\phi))$ is 
 semistable, and so is $(E,\phi)$.
 

\end{proof}

Let $\sigma_{x_i}\in Z^1_{c_{x_i}}(\Gamma_{x_i},G)$ for each isotropy point $x_i\in X$, with notation as in Section \ref{section-isotropy}. By Proposition \ref{underlyingsemi} here exists a forgetful morphism
\begin{equation}\label{eq-forgetful-morphism}
    \cM(X,G,\Gamma,\theta,c,\mu,\sigma)\to \cM(X,G).
\end{equation}
We denote the image of (\ref{eq-forgetful-morphism}) by  $\widetilde{\cM}(X,G,\Gamma,\theta,c,\mu,\sigma)$.
This consists of those isomorphism classes of polystable $G$-Higgs bundles which admit a 
$(\theta,c,\mu)$-twisted $\Gamma$-equivariant structure.
Now, if a $G$-Higgs bundle $(E,\phi)$ admits a $(\theta,c,\mu)$-twisted $\Gamma$-equivariant structure then, by definition of twisted equivariant structure, 
\[(E,\phi)\cong (\eta_{\gamma}^*\theta^{-1}_{\gamma}(E),\mug\eta_{\gamma}^*\theta^{-1}_{\gamma}(\phi)),\] where $\cong$ denotes isomorphism of $G$-Higgs bundles.
As points of $\mathcal{M}(X,G)$ consist of isomorphism classes of polystable $G$-bundles, we immediately have the following.
\begin{proposition}\label{obvious-fix}
 $\widetilde{\cM}(X,G,\Gamma,\theta,c,\mu,\sigma)\subset \cM(X,G)^{\Gamma}$.
\end{proposition}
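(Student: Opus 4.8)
The plan is to show that every point in the image of the forgetful map $\widetilde{\cM}(G,\Gamma,\eta,\theta,c,\chi,\{\sigma_{x_i}\})$ is fixed by the $\Gamma$-action (\ref{action3}), which is precisely the assertion of Proposition \ref{obvious-fix}. A point of this image is, by construction, the isomorphism class $[E,\phi]$ of a polystable $G$-Higgs bundle that admits a $(\Gamma,\eta,\theta,c,\chi)$-equivariant structure $\{\widetilde{\eta_\gamma}\}_{\gamma\in\Gamma}$. Since the closed points of $\cM(G)$ are precisely isomorphism classes of polystable $G$-Higgs bundles, it suffices to produce, for each $\gamma\in\Gamma$, an isomorphism of $G$-Higgs bundles $(E,\phi)\cong(\eta_\gamma^*\theta_\gamma(E),\chi(\gamma)\eta_\gamma^*\theta_\gamma(\phi))$; the right-hand side is exactly $\gamma\cdot(E,\phi)$ by (\ref{action3}), so this yields $\gamma\cdot[E,\phi]=[E,\phi]$ for every $\gamma$, i.e. $[E,\phi]\in\cM(G)^F$.

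First I would extract the required isomorphism from the equivariant structure. Fix $\gamma\in\Gamma$. By parts $(i)$ and $(ii)$ of Definition \ref{tstructure}, $\widetilde{\eta_\gamma}\colon E\to E$ is a biholomorphism covering $\eta_\gamma\colon X\to X$ and satisfying $\widetilde{\eta_\gamma}(eg)=\widetilde{\eta_\gamma}(e)\theta_\gamma(g)$. Composing $\widetilde{\eta_\gamma}$ with the canonical extension-of-structure-group biholomorphism $\widetilde{\theta_\gamma}\colon E\to\theta_\gamma(E)$ converts the $\theta_\gamma$-twisted equivariance into honest $G$-equivariance, producing a $G$-equivariant bundle map $E\to\theta_\gamma(E)$ covering $\eta_\gamma$; by the universal property of the pullback along $\eta_\gamma$ this is the same datum as an isomorphism of principal $G$-bundles $f_\gamma\colon E\xrightarrow{\sim}\eta_\gamma^*\theta_\gamma(E)$ over $X$. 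This is just the standard dictionary identifying a twisted bundle map covering $\eta_\gamma$ with an honest isomorphism over $X$ after extension of structure group and pullback, and it is exactly the isomorphism already recorded in the paragraph preceding the statement.

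Next I would verify that $f_\gamma$ respects the Higgs fields. The isomorphism $f_\gamma$ induces on adjoint bundles precisely the map $\widetilde{\eta_\gamma}^{\lieg}\colon E(\lieg)\to E(\lieg)$ of Remark \ref{examples-actions}(1), tensored with the map $\widetilde{\eta_\gamma}^{K_X}$ on $K_X$. The commutativity of the diagram in Definition \ref{mega} says exactly that, under this induced map, $\phi$ is carried to $\chi(\gamma)\,\eta_\gamma^*\theta_\gamma(\phi)$. Hence $f_\gamma$ upgrades to an isomorphism of $G$-Higgs bundles $(E,\phi)\cong(\eta_\gamma^*\theta_\gamma(E),\chi(\gamma)\eta_\gamma^*\theta_\gamma(\phi))=\gamma\cdot(E,\phi)$, as required. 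Running this over all $\gamma\in\Gamma$ shows $[E,\phi]$ is a fixed point, and since $[E,\phi]$ was an arbitrary point of the image, the inclusion follows. I would also remark that neither the $2$-cocycle $c$ nor the chosen local types $\{\sigma_{x_i}\}$ enter the argument: only the existence of a twisted equivariant structure is used, so the same reasoning applies uniformly for every $c$ and every system of local types.

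As for the main obstacle, there is essentially no hard analytic or geometric input: the statement is close to definitional, and the polystability of the underlying bundle is already guaranteed by part $(4)$ of Proposition \ref{underlyingsemi}. The one place requiring care is the bookkeeping in the second paragraph, namely matching the conventions for extension of structure group (the action of $\theta_\gamma$) and for pullback (the action of $\eta_\gamma$) so that the target of $f_\gamma$ is literally $\gamma\cdot(E,\phi)$ in the normalization of (\ref{action3}), rather than the corresponding object indexed by $\gamma^{-1}$. Since one has the structure maps $\widetilde{\eta_\gamma}$ for all group elements simultaneously this causes no real difficulty, but it is the only step where a convention slip could occur.
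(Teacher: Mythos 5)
Your argument is correct and follows essentially the same route as the paper: the paper likewise observes that a $(\Gamma,\eta,\theta,c,\chi)$-equivariant structure yields, for each $\gamma$, an isomorphism $(E,\phi)\cong(\eta_\gamma^*\theta_\gamma(E),\chi(\gamma)\eta_\gamma^*\theta_\gamma(\phi))$, and concludes by noting that closed points of $\cM(G)$ are isomorphism classes of polystable Higgs bundles. Your version merely makes explicit the dictionary between the twisted biholomorphism $\widetilde{\eta_\gamma}$ and an honest bundle isomorphism over $X$, which the paper leaves implicit.
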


\subsection{Fixed points and simplicity}\label{section-fixed-points-and-simplicity-alpha-trivial}

Recall that a $G$-Higgs bundle $(E,\phi)$ is said to be {\bf simple} if $\mbox{Aut}(E,\phi)=Z$.


\begin{proposition} \label{simple}
Let $\theta:\Gamma\to \Aut(G)$ be a lift of $a:\Gamma\to \Out(G)$, and 
let $(E,\phi)$ be a simple $G$-Higgs bundle over $X$ such that 
 $(E,\phi)\cong (\eta_{\gamma}^*\theta^{-1}_{\gamma}(E),\mu_{\gamma}\eta_{\gamma}^*\theta^{-1}_{\gamma}(\phi))$. 
Then $(E,\phi)$ admits a $(\theta,c,\mu)$-twisted $\Gamma$-equivariant structure for some 
$c\in Z^2_a(\Gamma,Z)$.
\end{proposition}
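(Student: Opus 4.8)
The plan is to convert the family of fixed-point isomorphisms into the biholomorphisms $\{\widetilde{\eta_\gamma}\}$ demanded by Definitions \ref{tstructure} and \ref{mega}, and then to extract the $2$-cocycle $c$ from the failure of these biholomorphisms to compose strictly, using simplicity to control that failure.

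First, for each $\gamma\in\Gamma$ I would fix an isomorphism of Higgs bundles $\psi_\gamma\colon (E,\phi)\xrightarrow{\sim}(\eta_\gamma^*\theta_\gamma(E),\chi_\gamma\eta_\gamma^*\theta_\gamma(\phi))$, which exists by hypothesis. Composing $\psi_\gamma$ with the tautological map $\eta_\gamma^*\theta_\gamma(E)\to\theta_\gamma(E)$ covering $\eta_\gamma$ and with the canonical extension-of-structure-group biholomorphism $\widetilde{\theta_\gamma}\colon E\to\theta_\gamma(E)$ (the one recalled in the Introduction, satisfying $\widetilde{\theta_\gamma}(eg)=\widetilde{\theta_\gamma}(e)\theta_\gamma(g)$), I obtain a biholomorphism $\widetilde{\eta_\gamma}\colon E\to E$ covering $\eta_\gamma$. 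A direct check then shows that $\widetilde{\eta_\gamma}$ satisfies condition (i) of Definition \ref{tstructure} by construction, condition (ii) for $\theta_\gamma$ because $\psi_\gamma$ and the pullback map are $G$-equivariant while $\widetilde{\theta_\gamma}$ carries the $\theta_\gamma$-twist, and the diagram of Definition \ref{mega} (with character $\chi_\gamma$) because $\psi_\gamma$ intertwines the Higgs fields. Here one must be careful with the direction conventions---extension of structure group versus pullback, and whether one lifts $\eta_\gamma$ or $\eta_\gamma^{-1}$---since an incorrectly oriented composition produces $\theta_\gamma^{-1}$-equivariance rather than $\theta_\gamma$-equivariance; orienting it correctly is what allows one to take $\theta'=\theta$.

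Second, I would observe that $\widetilde{\eta_\gamma}$ is determined by this recipe only up to composition with an automorphism of $(E,\phi)$, hence---by simplicity, since $\Aut(E,\phi)=Z$---only up to an element of $Z$. Now for $\gamma_1,\gamma_2\in\Gamma$ both $\widetilde{\eta_{\gamma_1}}\widetilde{\eta_{\gamma_2}}$ and $\widetilde{\eta_{\gamma_1\gamma_2}}$ cover $\eta_{\gamma_1\gamma_2}$ (as $\eta$ is a homomorphism), are $\theta_{\gamma_1\gamma_2}$-equivariant (as $\theta$ is a homomorphism), and respect $\phi$ with character $\chi_{\gamma_1\gamma_2}$ (as $\chi$ is a homomorphism). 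Therefore they differ by an automorphism of $(E,\phi)$, and I define $c(\gamma_1,\gamma_2)\in Z$ as this difference, normalized so that condition (iii) of Definition \ref{tstructure} holds. This produces a collection $\{\widetilde{\eta_\gamma}\}$ together with a $Z$-valued $2$-cochain $c$, that is, a candidate $(\Gamma,\eta,\theta,c,\chi)$-equivariant structure, pending the verification that $c$ is a cocycle.

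Third and last, I would check that $c\in Z^2_\theta(\Gamma,Z)$ by expanding $\widetilde{\eta_{\gamma_1}}\widetilde{\eta_{\gamma_2}}\widetilde{\eta_{\gamma_3}}$ in two ways using associativity of composition. The key point, and the step I expect to be the main obstacle, is the bookkeeping of the twist: moving a central element $z\in Z$ past a biholomorphism $\widetilde{\eta_\gamma}$ replaces it by $\theta_\gamma(z)$, since $\widetilde{\eta_\gamma}(ez)=\widetilde{\eta_\gamma}(e)\theta_\gamma(z)$ by condition (ii). Carrying this through yields exactly the twisted $2$-cocycle identity defining $Z^2_\theta(\Gamma,Z)$, with $\Gamma$ acting on $Z$ through $\theta$, completing the construction with $\theta'=\theta$. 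Finally I would record that, by Proposition \ref{equivalent-structures}, the same bundle admits a $(\Gamma,\eta,\theta',c,\chi)$-equivariant structure for any lift $\theta'\sim\theta$ in the sense of \eqref{equivalence}; the action of $\Gamma$ on $Z$, and hence the group $Z^2_\theta(\Gamma,Z)$, is unchanged under such a replacement because inner automorphisms act trivially on the centre, which is the form in which the statement is phrased.
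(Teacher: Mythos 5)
Your argument is correct and is essentially the paper's proof made explicit: the paper packages your construction as the statement that the twisted automorphisms of $(E,\phi)$ form an extension $1\to Z\to \Aut_{\Gamma,\eta,\theta',\chi}(E,\phi)\to\Gamma\to 1$ (surjectivity onto $\Gamma$ being exactly the fixed-point hypothesis, the kernel being $Z$ by simplicity) and reads off $c$ as the extension class, which is precisely your choice of set-theoretic section together with the discrepancy computation and the twisted cocycle check. The only cosmetic difference is that the paper allows the lift to change to some $\theta'$ equivalent to $\theta$ rather than arguing, as you do, that the orientation conventions can be arranged so that $\theta'=\theta$ itself works.
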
 
 \begin{proof}
 Let $(E,\phi)$ be a  $G$-Higgs bundle over $X$ such that 
 \begin{equation}\label{sim1} 
 (E,\phi)\cong (\eta_{\gamma}^*\theta^{-1}_{\gamma}(E),\mu_{\gamma}\eta_{\gamma}^*\tg^{-1}(\phi))
 \end{equation}
for all $\gamma \in \Gamma$.  Let $\Aut(E,\phi)$ be the group of automorphisms covering the identity of $X$, and $\Aut_{\Gamma,\eta,\theta}(E,\phi)$ be the group
defined in Remark \ref{remark-exact-sequence-aut} for $\rhog=\omu\theta$. 
The simplicity of $(E,\varphi)$ implies that 
$\Aut(E,\varphi)\cong Z$,  and hence (\ref{exact-aut})
gives an extension
$$
1\to Z\to \Aut_{\Gamma,\eta,\theta}(E,\varphi) \to \Gamma\to 1.
$$
This extension defines a 2-cocycle $c\in Z^2_a(\Gamma,Z)$,  and a
$c$-twisted homomorphism 
$$
\Gamma\to \Aut_{\Gamma,\eta,\theta}(E,\varphi)
$$ 
with cocycle $c$, that is, a $(\theta,c,\mu)$-twisted $\Gamma$-equivariant
structure on $(E,\varphi)$. 
\end{proof}

We are now ready to state the main result of this section. In order to make the notation more consistent with future sections, we rewrite $\widetilde{\cM}(X,G,\Gamma,\theta,c,\mu)$ as $\widetilde{\cM}(X,G,\Gamma,\theta,c,\omu\theta)$, taking advantage of the equivalence of categories between $(\theta,c,\mu)$-twisted $\Gamma$-equivariant Higgs bundles and $(\theta,c,\omu\theta)$-twisted $\Gamma$-equivariant Higgs bundles outlined at the end of Section \ref{section-twisted-equivariant-higgs-pairs}.

\begin{theorem}\label{main}
 Let ${\cM}_*(X,G)\subset \cM(X,G)$ be the open subvariety of $\cM(X,G)$ 
consisting of those $G$-Higgs bundles
 which are stable and simple. Then 
$$ \bigcup_{[c]\in H^2_a(\Gamma,Z)} 
\widetilde{\cM}(X,G,\Gamma,\theta,c,\omu\theta)
\subset
\cM(X,G)^{\Gamma}
$$
and 
 $$
{\cM}_*(X,G)^{\Gamma}\subset \bigcup_{[c]\in H^2_a(\Gamma,Z)} 
\widetilde{\cM}(X,G,\Gamma,\theta,c,\omu\theta).
$$
Here $\theta:\Gamma\to \Aut(G)$ is 
any  lift of $a:\Gamma\to \Out(G)$, and $H^2_a(\Gamma,Z)$ is the second cohomology group of $\Gamma$ with values in $Z$ with $\Gamma$-action induced by $a$. Moreover, the subvarieties
\begin{equation}\label{eq-intersections-twisted-equivariant-smooth}
    {\cM}_*(X,G)\cap\widetilde{\cM}(X,G,\Gamma,\theta,c,\omu\theta)={\cM}_*(X,G)^{\Gamma}\cap\widetilde{\cM}(X,G,\Gamma,\theta,c,\omu\theta)
\end{equation}
are all disjoint for different cohomology classes $[c]\in H^2_a(\Gamma,Z)$.

\end{theorem}
\begin{proof}
 Let $(E,\phi)\in {\cM}_*(G)^{\Gamma}$. 
 Then, by Proposition \ref{simple}, $(E,\phi)$ admits a $(\theta,c,\mu)$-twisted $\Gamma$-equivariant structure, where $c\in Z^2_\theta(\Gamma,Z)$, the set of all $2$-cocycles where 
 $\Gamma$ acts on $Z$ via $\theta$.
  Hence $(E,\phi)\in \widetilde{\cM}(X,G,\Gamma,\theta,c,\omu\theta)$. 
It follows from Proposition  \ref{cohom} that the
union should run over $[c]\in H^2_a(\Gamma,Z)$. The fact that the intersections \ref{eq-intersections-twisted-equivariant-smooth} are disjoint follows from the fact that a twisted $\Gamma$-equivariant action on a simple Higgs bundle is unique up to multiplication of the action of each $\gamma\in\Gamma$ by an element of $Z$, which preserves the cohomology class of the corresponding 2-cocycle.
\end{proof}

\begin{remark}\label{remark-isotropy}
    Theorem \ref{main} may be refined by taking account of the twisted $\Gamma$-action at isotropy points according to Proposition \ref{local-data}. More precisely, let $\{x_i\}_{i}$ be the set of isotropy points of the $\Gamma$-action on $X$, and let $\Gamma_{x_i}$ be the isotropy group of $\Gamma$ at $x_i$. Fix $c\in Z^2_a(\Gamma,Z)$ and let $c_{x_i}$ be its restriction to $\Gamma_{x_i}$. Denote by ${\cM}(X,G,\Gamma,\theta,c,\mu,\sigma)$ the moduli space of $(\theta,c,\mu)$-twisted $\Gamma$-equivariant $G$-Higgs bundles with local types $\{\sigma_{x_i}\}_{i}$, where $\sigma:=\{\sigma_{x_i}\}$ is a set of equivalence classes $\sigma_{x_i}\in H^1_{c_{x_i}}(\Gamma_{x_i},G)$ --- see Section \ref{section-isotropy}. Then each moduli space ${\cM}(X,G,\Gamma,\theta,c,\omu\theta)$ appearing in Theorem \ref{main} has a decomposition
\begin{equation*}
    {\cM}(X,G,\Gamma,\theta,c,\omu\theta)=\bigsqcup_{ 
[\sigma_{x_i}]\in H^1_{c_{x_i}}(\Gamma_{x_i},G)} 
{\cM}(X,G,\Gamma,\theta,c,\omu\theta,\sigma).
\end{equation*}
\end{remark}

\subsection{Fixed points in the character variety}\label{section-character-variety-alpha-trivial}

We now study the action of $\Gamma$ on the character variety $\calR(X,G)$, and 
describe the fixed points in terms of twisted equivariant representations. Recall that we are given  homomorphisms  $\eta:\Gamma\to \Aut(X)$, 
$a:\Gamma\to \Out(G)$ and $\theta:\Gamma\to \Aut(G)$, where $\theta$ is a lift of $a$. For most of this section the character $\mu$ is trivial.

Fix a point $x\,\in\, X$. For each $\gamma\in\Gamma$, the automorphism  $\eta_\gamma$ of $X$ produces a
homomorphism
$$
{\eta_\gamma}_*\, :\, \pi_1(X, x)\to \pi_1(X, \eta_\gamma(x))\, .
$$
This induces an automorphism of $\calR(X,G)$, since 
the quotient $\Hom(\pi_1(X,x),G)/G$ is independent of the base point of $X$.
Now, given the automorphism $\theta_\gamma$  of $G$ and  
$\rho\in \Hom(\pi_1(X,x),G)$, there is another representation of $\pi_1(X,x)$ in $G$ given by $\theta_\gamma\circ \rho$. This defines a left action of $\Gamma$ on 
$\calR(X,G)$ that clearly depends only on $a:\Gamma\to \Out(G)$.
So, for every $\gamma\in \Gamma$ and  $\rho\in \Hom(\pi_1(X),G)$, we define 
$\rho\cdot\gamma \in \Hom(\pi_1(X),G)$ by 
\begin{equation}\label{eq-action-alpha-trivial-character-variety}
\rho\cdot\gamma=\theta_\gamma^{-1}\circ \rho\circ {\eta_\gamma}_*.
\end{equation}
It is straightforward to show that the action of $\Gamma$ on  $\calR(X,G)$ 
given by this coincides with the action of $\Gamma$ on $\cM(X,G)$ defined in 
Section \ref{section-action} via the non-abelian Hodge correspondence ---see 
\cite{biswas-calvo-García-Prada,PR,ow} 
for a similar computation. 





Hence, combining Theorem \ref{rep1} with Theorems \ref{equivariant-nahc} and \ref{main}, we conclude the following.

\begin{theorem}\label{main-rep}
Let ${\calR}_{\irr}(X,G)\subset \calR(X,G)$ be the subvariety of $\calR(X,G)$ 
consisting of irreducible representations, and let 
$\widetilde{\calR}(X,G,\Gamma,\theta,c)$ 
be the image of $\calR(X,G,\Gamma,\theta,c)$ in $\calR(X,G)$ under the 
natural map defined by Diagramme  (\ref{compatible-rep}). Let $\calR(X,G)^{\Gamma}$ be
the fixed-point subvariety for the action of $\Gamma$ defined by the homomorphism $(\eta,a): \Gamma \to \Aut(X)\times \Out(G)$.
Then

$$
\bigcup_{[c]\in H^2_a(\Gamma,Z)}\widetilde{\calR}(X,G,\Gamma,\theta,c)\subset \calR(X,G)^{\Gamma}  
$$

and

$$
{\calR}_{\irr}(X,G)^{\Gamma}\subset \bigcup_{[c]\in H^2_a(\Gamma,Z)} 
\widetilde{\calR}(X,G,\Gamma,\theta,c).
$$
Here $\theta:\Gamma\to \Aut(G)$ is 
any  lift of $a:\Gamma\to \Out(G)$.
\end{theorem}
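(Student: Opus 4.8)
The plan is to transport the two inclusions already established on the Higgs bundle side---Proposition \ref{obvious-fix} and Theorem \ref{main}---across the non-abelian Hodge correspondence, using the equivariant version of Section \ref{higgs-bundles} (Theorem \ref{equivariant-nahc}) to match the twisted equivariant moduli spaces, and Proposition \ref{action-nahc} to match the fixed-point loci. Throughout we take $\chi:\Gamma\to\C^*$ trivial, as in this section. First I would record the consequences of Proposition \ref{action-nahc}: since the homeomorphism $\cM(G)\cong \calR(G)$ of Theorem \ref{rep1} intertwines the two $\Gamma$-actions, it carries $\cM(G)^F$ onto $\calR(G)^F$; and since by Theorem \ref{rep1} it carries the stable-and-simple locus onto the irreducible locus (both of which are $\Gamma$-invariant, these properties being preserved by pull-back and by twisting the structure group through $\theta_\gamma$), it carries ${\cM}_{ss}(G)^F$ onto ${\calR}_{\irr}(G)^F$.

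The central step is to establish the commutative square
\[
\begin{array}{ccc}
\cM(G,\Gamma,\eta,\theta,c) & \stackrel{\cong}{\longrightarrow} & \calR(G,\Gamma,\eta,\theta,c)\\
\downarrow & & \downarrow\\
\cM(G) & \stackrel{\cong}{\longrightarrow} & \calR(G),
\end{array}
\]
where the top arrow is the equivariant correspondence of Theorem \ref{equivariant-nahc}, the bottom is the ordinary one of Theorem \ref{rep1}, the left arrow is the forgetful map $(E,\{\widetilde{\eta_\gamma}\},\phi)\mapsto (E,\phi)$, and the right arrow is the restriction $\widehat{\rho}\mapsto \widehat{\rho}|_{\pi_1(X,x_1)}$ coming from diagram (\ref{compatible-rep}). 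Commutativity is built into the proof of Theorem \ref{equivariant-nahc}: the flat connection $D=\nabla_h+\phi-\tau_h(\phi)$ attached to a polystable twisted equivariant Higgs bundle is formed from the same Hitchin metric $h$ that the ordinary correspondence uses on $(E,\phi)$, and the restriction of its holonomy $\widehat{\rho}$ to $\pi_1(X,x_1)$ is precisely the holonomy representation of the underlying flat $G$-bundle. Taking images of the two vertical maps then identifies $\widetilde{\cM}(G,\Gamma,\eta,\theta,c)$ with $\widetilde{\calR}(G,\Gamma,\eta,\theta,c)$ under the bottom homeomorphism.

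With the square in hand, the first inclusion is immediate: Proposition \ref{obvious-fix} gives $\widetilde{\cM}(G,\Gamma,\eta,\theta,c)\subset \cM(G)^F$, and transporting along the square yields $\widetilde{\calR}(G,\Gamma,\eta,\theta,c)\subset \calR(G)^F$ for every lift $\theta$ of $a$. For the second inclusion I would start from Theorem \ref{main}, which bounds ${\cM}_{ss}(G)^F$ by a union over $[c]\in H^2_\theta(\Gamma,Z)$, $[\theta']\in H^1_\theta(\Gamma,\Int(G))$ and the local types $[\sigma]\in H^1_{c_{x_i}}(\Gamma_{x_i},G)$. The key observation here is that the full equivariant moduli space $\cM(G,\Gamma,\eta,\theta',c)$ is the union of its local-type strata $\cM(G,\Gamma,\eta,\theta',c,\chi,\{\sigma_{x_i}\})$, so that $\bigcup_{[\sigma]}\widetilde{\cM}(G,\Gamma,\eta,\theta',c,\chi,\{\sigma_{x_i}\})=\widetilde{\cM}(G,\Gamma,\eta,\theta',c)$ and the local-type union is absorbed (this is why no $\{\sigma_{x_i}\}$ appears on the representation side, where the strata are not fixed). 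Transporting the resulting bound $\bigcup_{[c],[\theta']}\widetilde{\cM}(G,\Gamma,\eta,\theta',c)$ across the square, together with ${\cM}_{ss}(G)^F\cong{\calR}_{\irr}(G)^F$, gives the stated inclusion for ${\calR}_{\irr}(G)^F$.

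The main obstacle is the verification that the square genuinely commutes---that the equivariant correspondence of Theorem \ref{equivariant-nahc} lifts the ordinary one \emph{fibrewise over the forgetful maps}, rather than merely being a homeomorphism of the two total spaces. Once the underlying flat connection and harmonic (Hitchin) reduction are seen to coincide with those produced by the ordinary correspondence, so that restriction of holonomy commutes with forgetting the twisted structure, the remaining bookkeeping---absorption of the local-type union above, and the reduction of the indexing sets modulo the equivalences of Propositions \ref{cohom}, \ref{canonical-isomorphism} and \ref{cohomology-lifts}, all of which are already packaged into Theorem \ref{main}---is routine.
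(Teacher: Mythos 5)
Your proposal is correct and follows exactly the route the paper intends: the paper offers no detailed argument for Theorem \ref{main-rep} beyond the sentence ``Combining Theorem \ref{rep1} and Proposition \ref{action-nahc} with Theorems \ref{equivariant-nahc} and \ref{main} and Proposition \ref{obvious-fix} we have the following,'' and your write-up is precisely that combination, with the two implicit points (commutativity of the square relating the equivariant and ordinary correspondences over the forgetful maps, and absorption of the local-type strata on the representation side) correctly identified and filled in.
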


\begin{remark}
We could also have considered a non-trivial character $\mu:\Gamma\to \C^*$ 
with image equal to the subgroup $\{\pm 1\}<\C^*$. With a minor
modification in the definition of the group $G\times_{(\theta,c)}\Gamma$ 
one would obtain similar results --- see  
\cite{biswas-calvo-García-Prada,PR,ow} 
for an analogous situation.
\end{remark}

\subsection{Involutions of hyperelliptic curves and \texorpdfstring{$G=\SL(2,\C)$}{G=SL(2,C)}}\label{section-alfa-trivial-example1}

 Let $G=\SL(2,\C)$ and $(X,\sigma)$ be a hyperelliptic curve together with the 
hyperelliptic involution $\sigma$.
 In this case let $\Gamma=\Z/2\Z$  and 
consider  the homomorphism $\eta:\Z/2\Z\to \Aut(X)$ defined by sending 
$-1\mapsto \sigma$.
In this case $\Out(G)=1$ and $Z=\Z/2\Z$,  hence $\Aut(G)=\Int(G)$, 
and therefore
 $\Aut(G)$ acts trivially on the centre $Z$. So, in this case, $H^2(\Z/2\Z,Z)=\Z/2\Z$. 
Also,
there are only two characters $\mu^\pm$, defined by  $\mu^\pm(-1)=\pm 1$. 
We can then define actions on the moduli space of $\SL(2,\C)$-Higgs bundles
defined by $\eta$ and $\mu^\pm$. The case in which $\eta$ is the trivial homomorphism from $\Gamma$ to $\Aut(X)$ and $\mu=\mu^-$ is studied in
\cite{hitchin1987,García-Prada,García-Prada-ramanan,PR}.

\subsection{Involutions of hyperelliptic curves and \texorpdfstring{$G=\SL(n,\C)$}{G=SL(n,C)}}\label{section-alfa-trivial-example2}

Let $G=\SL(n,\C)$, with $n>2$ and $X$ a hyperelliptic curve together with the 
hyperelliptic involution $\sigma$, as above. Let $\Gamma=\Z/2\Z$ and 
$\eta:\Z/2\Z\to \Aut(X)$ be the 
homomorphism defined by sending $-1\mapsto \sigma$. 
In this case $\Out(G)\cong \Z/2\Z$ and $Z\cong \Z/n\Z$.
Let us denote the 
trivial homomorphism from $\Gamma\to \Out(G)$ by $a^+$, and the homomorphism which sends $-1$ to $b$, where
$\Out(G)=\langle b \rangle$, by $a^-$. 
In the first case $\Gamma$ acts trivially on the 
centre $Z$ via $a^+$. To compute the second group cohomology of $\Z/2\Z$ with coefficients in $\Z/n\Z$ we use the following fact:
Let $\Lambda$ be a cyclic group of order $r$ generated by $t$ and $A$ be a finite abelian group with a $\Lambda$-action. Let 
$N=1+t+\cdots+t^{r-1}\in \Z[\Gamma]$ then obviously $Na$, $a\in A$, is fixed by all $\lambda\in \Lambda$. With these notations
$H^p(\Lambda, A)=\frac{A^{\Gamma}}{NA}$, $p=2,4,6...$.
Hence, in this case, $H^2(\Gamma,Z)=0$ when $n$ 
is odd and $H^2(\Gamma,Z)=(\Z/2\Z)^r$ when $n$ is even,
where $r$ is the minimal number of generators of the $2$-Sylow subgroups of 
$\Z/n\Z$. 
On the other 
hand the action of the generator of $\Z/2\Z$ on $Z$ induced by $a^-$ 
sends $x\in Z$ to $x^{-1}$. In this case $H^2_{a^-}(\Gamma, Z)=Z^{\Z/2\Z}$.
Therefore the action is trivial when $n$ is odd and 
hence $H^2_{a^-}(\Gamma, Z)=0$, and if $n$ is even then $H^2_{a^-}(\Gamma,Z)$ consists of  
all order $2$ elements of $\Z/n\Z$.
As in the previous example, one has $\mu^\pm$ as possible characters.

The cases in which $\eta$ is the trivial homomorphism from $\Gamma$ to $\Aut(X)$ and $\mu=\mu^\pm$ is studied in
\cite{García-Prada,PR} --- see also \cite{heller-schaposnik,schaffhauser} 
for related work.

\subsection{The case \texorpdfstring{$G=\Spin(8,\C)$}{G=Spin(8,C)}}\label{section-alfa-trivial-example3}

Let $G=\Spin(8,\C)$. Then $Z=\Z/2\Z\times \Z/2\Z$ and $\Out(G)\cong S_3$. 
In \cite{PR} the authors consider various 
actions of cyclic subgroups  $\Gamma$ of $\Out(G)$, with 
$\Gamma$ acting trivially on  $X$, and identify the fixed-point subvarieties.  

Now in our situation the following three cases are relevant.

Case (I): Let $X$ be a compact Riemann surface of genus $g>2$ and $\Gamma:=S_3$.
Let $\eta:\Gamma \to \Aut(X)$ be an injective homomorphism, so that the action of $\Gamma$ on $X$ is faithful.
 Let $\sigma$ and $\tau$ generate the group $\Out(\Spin(8,\C))\cong S_3$. 
Let $a: \Gamma\to \Out(\Spin(8,\C))$ be the isomorphism defined by sending an order $2$ generator to $\sigma$
and an order $3$ generator to $\tau$. Let $\mu:\Gamma\to \mathbb{C}^*$ be a  
character of $S_3$. We know that $S_3$ has three non-equivalent conjugacy classes. Let 
$\mu_i$, $i=1,2,3$, be the corresponding characters. 
We define homomorphisms 
$F_i=(\eta,a,\mu_i):\Gamma \to \Aut(X)\times \Out(G)\times \C^*$, $i=1,2,3$. Then each $F_i$  determines an action on 
the moduli space of $G$-Higgs bundles. Let $H=\langle\tau\rangle$ be the 
normal subgroup of $G$ generated by $\tau$. 
Then by \cite[Lemma 2.2.4]{oxr} $H^2(\Gamma,Z)=H^2(\Gamma/H,Z^{H})$. As $\tau$ is an element of order $3$ either $Z^{H}=(e)$ or $Z^{H}=Z$.
So either $H^2(\Gamma,Z)=0$ or $H^2(\Gamma,Z)=\Z/2\Z$.

Case (II): Let $X$ be a hyperelliptic curve and $\Gamma:=S_3$. We 
define a homomorphism $\eta:S_3\to \Aut(X)$ by sending $\sigma$ to $\text{Id}$ and $\tau$ to an order $2$ hyperelliptic 
involution. Let $b_i\in \Out(G)$ be the class of an order two automorphism of $G$ and $\theta_i:\Gamma \to \Out(G)$
be the homomorphism defined by $\tau \mapsto 1$ and $\sigma \mapsto b_i$. 
We define $F_i:=(\eta,\theta_i,\mu_i):\Gamma\to \text{Aut}(X)\times \text{Out}(G)$, $i=1,2,3$. Then the respective actions of $\Gamma$ on the moduli space 
of $G$-Higgs bundles are determined by $F_i$. In this case the subgroup $H$ 
acts on $Z$ trivially, therefore $Z^{H}=Z$, and hence 
$H^2(\Gamma,Z)=\Z/2\Z$.

Case (III): $X$ is a cyclic trigonal curve.  In other words we assume that the subgroup $\langle \tau\rangle$ acts trivially on $X$ and
$f$ is an order $3$ automorphism such that $X/\langle f\rangle\cong \mathbb{P}^1$.
This case is related to the work of Oxbury and Ramanan \cite{ow}, and to
what they refer as Galois $\Spin(8,\mathbb{C})$-bundles.
We 
define a homomorphism $\eta:S_3\to \Aut(X)$ by sending $\sigma$ to $\Id$ and 
$\tau$ to the order $3$ automorphism $f$.
Let $b$ be the class of unique order $3$ automorphism of $X$, and let $\theta:S_3\to \Out(G)$ be the homomorphism sending $\sigma$ to $I$ and $\tau$ to $b$.  As in the previous case we define $F_i:=(\eta,\theta_i,\mu_i):\Gamma\to \text{Aut}(X)\times \text{Out}(G)$, $i=1,2,3$. 
Then the action of $\Gamma$ on the moduli space 
of $G$-Higgs bundles is determined by $F_i$. Since $\Z/3\Z$ and 
$Z=\Z/2\Z\times \Z/2\Z$ have coprime 
order, by \cite[Lemma 2.2.4]{ow} $H^2(\Z/3\Z,Z)=0$.

\subsection{The case \texorpdfstring{$G=E_6$}{G=E6}}\label{section-alfa-trivial-example4}

Let $G$ be a group of type $E_6$. In this case $\text{Out}(G)=\Z/2\Z$.
Let $X$ be a hyperelliptic curve together with a hyperelliptic involution $\sigma$ and $\Gamma=\Z/2\Z$.
We define a homomorphism $\eta:\Gamma\to \text{Aut}(X)$ by sending $-1\mapsto \sigma$. As in the case 
of example $1$ one has two homomorphisms $a^\pm:\Gamma\to \text{Out}(G)$ and two characters $\mu^{\pm}$.

\section{Fixed points for trivial action on \texorpdfstring{$X$}{X}}
\label{section-trivial-eta}

Let $X$ be a compact Riemann surface and let $G$ be a connected semisimple complex Lie group with centre $Z$ and Lie algebra $\lieg$. Let $\Gamma$ be a finite group equipped with a homomorphism
$$\Gamma\to H^1(X,Z)\rtimes\outg\times\C^*.$$
Projections on the first, second and third factors provide a 1-cocycle $\alpha\in Z^1_a(\Gamma,H^1(X,Z))$ and homomorphisms $a:\Gamma\to \Out(G)$, $\mu:\Gamma\to\C^*$, respectively --- see Section \ref{section-lifts-and-non-abelian-cohomology}.
We consider the action of $\Gamma$ on $\mdl(X,G)$ defined by (\ref{eq-action}), for trivial $\eta$.

\begin{remark}\label{remark-finiteness-zt}
Sections \ref{section-simple-and-H}, \ref{section-fixed-isomorphism-classes} and \ref{section-trivial-eta-fixed-moduli} could be rewritten for $G$ an arbitrary connected reductive complex Lie group, hence Theorem \ref{th-fixed-points-oscar-ramanan-higgs} also holds in this general setting. Section \ref{section-prym-narasimhan-ramanan} may also be rewritten if $G$ is reductive, under the assumption that the order of $\alpha\in Z^1_a(\Gamma,H^1(X,Z))$ is finite.
\end{remark}

\subsection{Simple fixed points and elements of \texorpdfstring{$H^1_{\theta}(\Gamma,G/Z)$}{H1(Gamma,G/Z)}}\label{section-simple-and-H}
Fix a lift $\theta:\Gamma\to\Aut(G)$ of $a$. Abusing notation we also call $\theta$ to the induced homomorphism $\Gamma\to\Aut(G/Z)$. 
In this section we construct a map from the set $S$ of isomorphism classes of simple $G$-Higgs bundles which are fixed under the action of $\Gamma$ to $H^1_{\theta}(\Gamma,G/Z)$, as defined in Section \ref{section-lifts-and-non-abelian-cohomology}.

Let $(E,\phi)$ be a simple $G$-Higgs bundle over $X$ such that $(E,\phi)\cong(\tg^{-1}(E\otimes\alg),\mug\tg^{-1}(\phi))$ for every $\gamma$ in $\Gamma$. In other words, for each $\gamma\in\Gamma$ there is an isomorphism
$$\hg:E\xrightarrow{\sim}\tg^{-1}(E\otimes\alg)$$
such that
$\hg(\phi)=\mu_{\gamma}\tg^{-1}(\phi).$
This in turn induces an isomorphism
$\ohg:E/Z\to\tg^{-1}(E\otimes\alg)/Z.$
Since $\tg(Z)=Z$, there are natural isomorphisms
$\tg^{-1}(E\otimes\alg)/Z\cong\tg^{-1}(E)/Z\cong\tg^{-1}(E/Z).$
According to Section \ref{section-action}, $\tg^{-1}(E/Z)$ and $E/Z$ are biholomorphic. Therefore, we may regard the isomorphisms $\ohg$ as biholomorphisms
$$\ohg:E/Z\to E/Z$$
satisfying
$\ohg(eg)=\ohg(e)\tg(g).$

Define a holomorphic map
$$f:E/Z\to \fun{\Gamma}{G/Z}$$
in such a way that $\ohg(e)=e \fg(e)$ for each $e\in E/Z$. A straightforward calculation shows that 
\begin{equation}\label{eq-conjugacy-f}
    f(e g)=g^{-1}f(e)\theta(g)
\end{equation}
for every $g\in G/Z$ and $e\in E/Z$, where we are identifying elements in $G/Z$ with constant functions from $\Gamma$ to $G/Z$. 

\begin{lemma}\label{lemma-f-Z}
For every $e\in E/Z$, $f(e)\in Z^1_{\theta}(\Gamma,G/Z)$.
\end{lemma}
\begin{proof}
Recall from Remark \ref{remark-induced-isomorphisms} that, if $\gamma$ and $\gamma'$ are elements of $\Gamma$, the isomorphism $\hg$ induces an isomorphism 
$$(E,\phi)\cdot\gamma'\to (E,\phi)\cdot\gamma\gamma'$$
which we also call $\hg$. Since $h_{\gamma}h_{\gamma'}h_{\gamma\gamma'}^{-1}$ is in the gauge group $Z$ of $(E,\phi)$, we obtain $\ohg \oh_{\gamma'}=\oh_{\gamma\gamma'}$ and so
$$ef_{\gamma\gamma'}(e)=\oh_{\gamma\gamma'}(e)=\oh_{\gamma}\oh_{\gamma'}(e)=\oh_{\gamma}(ef_{\gamma'}(e))=\oh_{\gamma'}(e) \theta_{\gamma}(f_{\gamma'}(e))=ef_{\gamma}(e) \theta_{\gamma}(f_{\gamma'}(e))$$
for each $e\in E/Z$, as required.
\end{proof}

Note that the fact that $G/Z$ is an affine algebraic variety implies that $\fung$ is affine. Since $Z^1_{\theta}(\Gamma,G/Z)$ is a closed subvariety of $\fung$, it is itself affine. Consider the action of $G/Z$ on $Z^1_{\theta}(\Gamma,G/Z)$, such that $g\in G/Z$ sends $\beta\in Z^1_{\theta}(\Gamma,G/Z)$ to $g\beta\theta(g)^{-1}$. Because of Lemma \ref{lemma-f-Z} and (\ref{eq-conjugacy-f}), there is a morphism
\begin{equation*}\label{eq-def-morphism-f}
    X\to Z^1_{\theta}(\Gamma,G/Z)\sslash G/Z,
\end{equation*}
where the right hand side is the corresponding GIT quotient \cite{mumford}. Since $Z^1_{\theta}(\Gamma,G/Z)$ is affine, so is $Z^1_{\theta}(\Gamma,G/Z)\sslash G/Z$. Since this is an algebraic morphism from a projective variety to an affine variety, it must be constant. 

Denote by $O(e)$ the orbit of $f(e)$ under the action of $G/Z$. Given another element $e'\in E$ in the same fibre of $e$, by the previous paragraph the images of $f(e)$ and $f(e')$ in the GIT quotient $Z^1_{\theta}(\Gamma,G/Z)\sslash G/Z$ are equal and so the closures $\overline{O(e)}$ and $\overline{O(e')}$ in $Z^1_{\theta}(\Gamma,G/Z)$ intersect. We claim that $O(e)$ is closed in $Z^1_{\theta}(\Gamma,G/Z)$, so that $O(e)=O(e')$ for every two elements $e$ and $e'$ in the same fibre.

In fact, the orbit of any element of $Z^1_{\theta}(\Gamma,G/Z)$ by the $G/Z$-action is closed. Indeed, there is a closed embedding
$$Z^1_{\theta}(\Gamma,G)\hookrightarrow\Hom(\Gamma,G\rtimes_{\theta}\Gamma);\,\beta\mapsto (\gamma\mapsto (\beta(\gamma),\gamma)).$$
This is $G$-equivariant for the conjugation action of $G$ on $\Hom(\Gamma,G\rtimes_{\theta}\Gamma)$, since
\begin{align*}
g(\beta(\gamma),\gamma)g^{-1}=(g\beta(\gamma)\tg(g)^{-1},\gamma)
\end{align*}
for each $g\in G$, $\gamma\in\Gamma$ and $\beta\in Z^1_{\theta}(\Gamma,G)$.
Recall that a homomorphism $\Gamma\to G\rtimes_{\theta}\Gamma$ is \textbf{reductive} if its composition with the adjoint representation is completely reducible. On the other hand, $G\rtimes_{\theta}\Gamma$ is a reductive group --- it is a finite extension of a reductive group --- and so reductive representations have closed orbits \cite{representations-finitely-generated-groups}. But $\Gamma$ is finite, hence all its representations are reductive.

We thus obtain a map
\begin{equation}\label{eq-def-f}
    \wf:S\to H^1_{\theta}(\Gamma,G/Z)=Z^1_{\theta}(\Gamma,G/Z)/ (G/Z)
\end{equation}
sending $(E,\phi)$ to the class of $f(e)$ for any $e\in E/Z$, where $S$ is the set of isomorphism classes of simple $G$-Higgs bundles whose isomorphism class is preserved by $\Gamma$. Moreover, the following lemma holds.

\begin{lemma}\label{lemma-orbit-fibre}
An element $\beta\in Z^1_{\theta}(\Gamma,G/Z)$ is in the class $\wf(E,\phi)$ if and only if, for every --- or for some --- $x\in X$, there exists $e$ in the fibre of $E/Z$ over $x$ such that $f(e)=\beta$.
\end{lemma}
\begin{proof}
The ``if'' direction follows immediately from the definition of $\wf$. For the ``only if'' direction, fix $x\in X$ and let $\beta\in \wf(E,\phi)$. Then there exist $e'$ in the fibre of $x$ and $g\in G/Z$ such that $f(e')=g^{-1}\beta\theta(g).$ By (\ref{eq-conjugacy-f}) we obtain
$$f(e'g^{-1})=gf(e')\theta(g^{-1})=\beta,$$
so we set $e:=e'g^{-1}$.
\end{proof}

\begin{remark}\label{remark-G/Z-vs-Int(G)}
    Using the natural isomorphism $G/Z\cong\Int(G)$, we sometimes regard the image of $\wf$ as an element of  $H^1_{\theta}(\Gamma,\Int(G))$, where the action of $\Gamma$ on $\Int(G)$ is conjugation by $\theta$.
\end{remark}

\begin{remark}\label{remark-f-theta-theta'}
    Given another lift $\theta'=\beta\theta$ of $a$, where $\beta\in Z^1_{\theta}(\Gamma,\Int(G))$, the bijections from $Z^1_{\theta}(\Gamma,\Int(G))$ and $Z^1_{\theta'}(\Gamma,\Int(G))$ to the set of lifts of $a$ induce the bijection
    \begin{equation}\label{eq-iso-Htheta-Htheta'}
        Z^1_{\theta}(\Gamma,G/Z)\xrightarrow{\beta^{-1}}Z^1_{\theta'}(\Gamma,G/Z)
    \end{equation}
    given by multiplication by $\beta^{-1}$ on the right. If we denote by $\wf'$ the map (\ref{eq-def-f}) defined in this section after replacing $\theta$ with $\theta'$, then $\wf'$ is equal to the composition of $\wf$ with (\ref{eq-iso-Htheta-Htheta'}), i.e. $\wf'=\beta^{-1}\wf$.
\end{remark}

\subsection{Simple \texorpdfstring{$G$}{G}-Higgs bundles and fixed points}\label{section-fixed-isomorphism-classes}
We describe the isomorphism classes of $G$-Higgs bundles which are fixed under the action of $\Gamma$ using arguments similar to the ones in \cite{PR}. Let $K_X$ be the canonical bundle of $X$. In what follows we use the following notation: in general, given a $G$-Higgs bundle $(E,\phi)$ --- see Definition \ref{def-higgs-pair} ---, a subgroup $G'\le G$ and a reduction of structure group $F$ of $E$ to $G'$, there is an induced Higgs field $\psi\in F(\lieg)\otimes K_X$, where the representation of $G'$ in $\lieg$ is the restriction of the adjoint representation. This is due to the fact that, if $\phi$ is locally defined by $(e,v)\otimes k$ for some $e\in E$, $v\in \lieg$ and $k\in K_X$, there exists $g\in G$ such that $(eg,\Ad_{g^{-1}}v)\otimes k$ is also a local expression for $\phi$ and $eg\in F$. In fact, for this reason $E(\lieg)$ and $F(\lieg)$ are isomorphic vector bundles. we denote by $(F,\psi)$ a \textbf{reduction of structure group} of $(E,\phi)$. Conversely, $(E,\phi)$ is an \textbf{extension of structure group} of $(F,\psi)$.

\begin{proposition}\label{prop-reduction}
Let $(E,\phi)$ be a $G$-Higgs bundle and $\theta\in\homgh$ a lift of $a$. With notation as in Sections \ref{section-Gtheta} and \ref{section-adjoint-representation}, assume that there is a reduction of structure group of $(E,\phi)$ to a $(\gs,\liegm)$-Higgs pair $(F,\psi)$ such that 
\begin{equation}\label{eq-c-theta}
    \ctt(F)=\alpha.
\end{equation}
Then $(E,\phi)$ is isomorphic to $(E,\phi)\cdot\gamma:=\tg^{-1}(E\otimes\alg,\mug\phi)$ for every $\gamma\in\Gamma$. If $\wf$ is the map (\ref{eq-def-f}), then $\wf(E,\phi)=1\in H^1_{\theta}(\Gamma,G/Z)$.
\end{proposition}

\begin{proof}
For each $\gamma$ in $\Gamma$ we want to obtain an isomorphism
$$\hg:F\xrightarrow{\sim}\tg^{-1}(F\otimes\alg).$$
This will, in turn, induce an isomorphism from the extension $E$ to the extension $\tg^{-1}(E\otimes \alg)$.
Fix $\gamma\in\Gamma$ and choose an open cover $\{U_i\}_{i\in I}$ of $X$ which trivializes both $F$ and $\alg$. Let $e_i$ and $z_i$ define local sections of $F$ and $\alg$ on $U_i$, respectively. We obtain transition functions
$$g_{ij}:U_i\cap U_j\to G,\,z_{ij}:U_i\cap U_j\to Z$$
satisfying $e_j=e_ig_{ij}$ and $z_j=z_iz_{ij}$. A set of local trivializations for $\tg^{-1}(F\otimes\alg)$ is then $\{U_i,e_i\otimes z_i\}$ --- with each $e_i\otimes z_i$ regarded as a local section of $\tg^{-1}(F\otimes \alg)$ as in Section \ref{section-action} ---, and the corresponding transition functions are
$$\tg^{-1}(g_{ij}z_{ij}):U_i\cap U_j\to G.$$
But, according to (\ref{eq-c-theta}), we may assume that $z_{ij}=g_{ij}^{-1}\tg(g_{ij})$ and so
$$\tg^{-1}(g_{ij}z_{ij})=\tg^{-1}(g_{ij}g_{ij}^{-1}\tg(g_{ij}))=g_{ij}.$$
Therefore, we may set $\hg(e_i):=e_i\otimes z_i$ and extend the isomorphism to the whole $F$ by imposing that it respects the $\gs$-actions.

We need to check that 
$$\hg(\psi)=\mu_{\gamma}\tg^{-1}(\psi).$$
Note that, if $\psi$ is locally of the form $(e_i,v)\otimes k$, where $e_i$ is the local section defined above, then the local form of $\hg(\psi)$ is also $(e_i,v)\otimes k$ with $e_i$ considered as a local section of $\tg^{-1}(E)$. But $\mug\tg^{-1}(\psi)$ is locally equal to
$$\mug(e_i,\tg^{-1}(v))\otimes k=\mug\mu_{\gamma}^{-1}(e_i,v)\otimes k=(e_i,v)\otimes k,$$
as required.

Finally, by the construction of $\hg$, the induced isomorphism $\hg:E/Z\to E/Z$ features an element $e\in E/Z$ --- namely, the image of $e_i$ in $E/Z$ for any $i$ --- such that $\hg(e)=e$, therefore the last statement of the proposition follows by the definition of $\wf$.
\end{proof}

\begin{proposition}\label{prop-simple-fixed-points}
Let $(E,\phi)$ be a simple $G$-Higgs bundle over $X$ isomorphic to $(E,\phi)\cdot\gamma$ for every $\gamma$ in $\Gamma$. Then there exists a homomorphism $\theta\in\homgh$ lifting $a$ and a $(\gs,\liegm)$-Higgs pair $(F,\psi)$ providing a reduction of structure group of $(E,\phi)$ satisfying (\ref{eq-c-theta}), i.e.
$
    \ctt(F)=\alpha.
$
Moreover, $\theta$ is unique up to conjugation by $\Int(G)$ and such a reduction exists and is unique, up to the action of $Z$, for every element of its conjugacy class. Given an automorphism $\theta'$ conjugate to $\theta$ and two reductions $(F,\psi)$ and $(F',\psi')$ to $(\gs,\liegm)$ and $(G_{\theta'},\lie g^{\theta'}_{\mu})$ respectively, there exists $g\in G$, such that $\theta'=\Int_g\theta\Int_g^{-1}$ and $F=F'g$.
\end{proposition}

\begin{proof}
Fix a lift $\theta$ of $a$. With notation as in Section \ref{section-simple-and-H} choose an element $\beta\in \wf(E,\phi)\subseteq Z^1_{\ot}(\Gamma,G/Z)\cong Z^1_{\ot}(\Gamma,\Int(G))$. consider the map
$\beta\theta:\Gamma\to\Aut(G).$
By Lemma \ref{lemma-lifts-vs-non-abelian-cohomology}, this is a homomorphism.

Take a map $s\in\Fun(\Gamma,G)$ such that $\beta=\Int_s$. Replace $\theta$ by $\beta\theta$ and the isomorphisms $\hg:(E,\phi)\to(\tg^{-1}(E\otimes\alg),\mug\tg^{-1}(\phi))$ by the compositions
\begin{align*}
    (E,\phi)\xrightarrow{\hg}(\tg^{-1}(E\otimes\alg),\mug\tg^{-1}(\phi))\xrightarrow{s_{\gamma}^{-1}}&(\tg^{-1}\Int_{s_{\gamma}^{-1}}(E\otimes\alg),\mug\tg^{-1}\Int_{s_{\gamma}^{-1}}(\phi))
    \\&=((\Int_{s_{\gamma}}\tg)^{-1}(E\otimes\alg),\mug(\Int_{s_{\gamma}}\tg)^{-1}(\phi)),
\end{align*}
where the second map is just multiplication by $s_{\gamma}^{-1}$ according to the $G$-action on $E$ --- or, alternatively, multiplication by $\tg^{-1}(s)$ according to the action on $\tg^{-1}(E\otimes\alg)$. Then we may assume that $1$ is in the image of the map $f:E/Z\to Z^1_{\theta}(\Gamma,G/Z)$ defined in Section \ref{section-simple-and-H}. By Lemma \ref{lemma-orbit-fibre}, $f^{-1}(1)$ has non-empty intersection with every fibre of $E/Z$. 

Define
\begin{equation}\label{eq-definition-reduction-from-iso}
    F:=\{e\in E:\hg(e)=e\otimes\zg(e),\,\zg(e)\in\alg\},
\end{equation}
where the elements $e\otimes\zg(e)$ are regarded as elements of $\tg^{-1}(E\otimes\alg)$ according to Section \ref{section-action}. This is the preimage of $f^{-1}(1)$ under the natural projection
$E\to E/Z.$ 
By the previous paragraph we may assume that the intersection of $F$ with every fibre is non-empty. Moreover, given $e\in F$ and $g\in G$, the element $eg$ is also in $F$ if and only if
$$e\tg(g)\otimes\zg'(e)=\hg(eg)=eg\otimes\zg(eg)$$
for every $\gamma\in\Gamma$ and some other elements $\zg'(e)\in\alg$ or, equivalently, $\tg(g)g^{-1}\in Z$. This shows that $F$ is a reduction of structure group of $E$ to $\gs$, and it inherits isomorphisms
$$F\to\tg^{-1}(F\otimes\alg);\,e\mapsto e\otimes\zg(e)$$
for every $\gamma\in\Gamma$. To see why (\ref{eq-c-theta}) is true, we fix $\gamma\in\Gamma$ and use an open cover $\{U_i\}_{i\in I}$ of $X$ trivializing $F$ together with local sections $e_i$ on each $U_i$, so that $\{z_i:=\zg(e_i)\}$ is a set of local sections for $\alg$. Setting $z_j=z_iz_{ij}$ --- according to the $Z$-action on $\alg$ --- and $e_j=e_ig_{ij}$, and denoting the action of $G$ on $\tg^{-1}(F\otimes\alg)$ with a dot,
$$(e_i\otimes z_i)g_{ij}z_{ij}=(e_i\otimes z_i)\cdot \tg^{-1}(g_{ij}z_{ij})=e_j\otimes z_j=\hg(e_j)=\hg(e_ig_{ij})=(e_i\otimes z_i)\cdot g_{ij}.$$
Therefore,
$z_{ij}=\tg(g_{ij})g_{ij}^{-1}$
as required.

On the other hand, if $\phi$ is locally of the form $(e,v)\otimes k\in E(\lie g)\otimes K_X$, the equation
\begin{equation}\label{eq-Phi-induces-local}
    (\hg(e),v)\otimes k=\mu_{\gamma}(e,\tg^{-1}(v))\otimes k
\end{equation}
holds for every $\gamma\in \Gamma$, where we are using the natural identification between $\tg^{-1}(E\otimes\alg)(\lieg)$ and $\tg^{-1}(E)(\lieg)$. The Higgs field $\phi$ is also equal to $(e g,\Ad_{g^{-1}}v)\otimes k$ for some $g\in G$ such that $e g\in F$, so we may assume that $e$ lies in $F$. In this case, (\ref{eq-Phi-induces-local}) translates into
$$\tg^{-1}(E)(\lieg)\otimes K_X\ni(e,v)\otimes k=\mu_{\gamma}(e,\tg^{-1}(v))\otimes k,$$
that is to say,
$v=\mu_{\gamma}\tg^{-1}(v)$
whenever $k$ does not vanish, and so $\phi$ is induced by a section of $F(\lieg^{\theta}_{\mu})\otimes K_X$ as required.

Note that another choice of $\beta$ in $\wf(E,\phi)$ would provide an element $g^{-1}\beta\ot(g)\in G/Z$ for some $g\in G/Z$, so that $\beta\theta$ is replaced by $\intt{g^{-1}}\beta\theta\intt g$ and so every element in the conjugacy class of $\beta\theta$ satisfies the requirements in the statement of the proposition. The uniqueness of the reduction up to multiplication by $Z$ follows from Proposition \ref{prop-reduction}, the simplicity of $(E,\phi)$ and the fact that the resulting isomorphisms completely determine the reduction by (\ref{eq-definition-reduction-from-iso}). 

Finally, we prove the uniqueness of $\theta$ up to conjugation by $\Int(G)$. Assume that there are two lifts $\theta$ and $\theta'$ of $a$ providing reductions of structure group $(F,\psi)$ and $(F',\psi')$ of $(E,\phi)$ to $\gs$ and $G_{\theta'}$, respectively, and satisfying (\ref{eq-c-theta}). Proposition \ref{prop-reduction} provides isomorphisms
$$\hg:(E,\phi)\xrightarrow{\sim}(\tg^{-1}(E)\otimes\alg,\tg^{-1}(\phi))$$
and
$$\hg':(E,\phi)\xrightarrow{\sim}(\tg'^{-1}(E)\otimes\alg,\tg'^{-1}(\phi)).$$
Moreover, the images of $(E,\phi)$ by the functions $\wf:S\to H^1_{\theta}(\Gamma,G/Z)$ and $\wf':S\to H^1_{\theta'}(\Gamma,G/Z)$ defined in Section \ref{section-simple-and-H} using $\theta$ and $\theta'$, respectively, are the respective trivial cohomology classes. Let
$\beta\in Z^1_{\theta}(\Gamma,\Int(G))\cong Z^1_{\theta}(\Gamma,G/Z)$ be such that $\theta'=\beta\theta$. By Remark \ref{remark-f-theta-theta'} $\wf'(E,\phi)\beta=\wf(E,\phi)$, which implies that there exists $g\in G$ such that $\Int_{g\tg(g)^{-1}}=\beta_{\gamma}$ for each $\gamma\in\Gamma$. Therefore $\theta'=\beta\theta=\Int_g\theta\Int_g^{-1}$. Moreover, $F' g$ is a reduction of structure group to $g^{-1}G_{\theta'}g=G_{\Int_g^{-1}\theta'\Int_g}=\gs$. To see why this equation is true, note that $u\in G$ is contained in $\Int_{g}(\gs)$ if and only if
$$\Int_g\theta\Int_{g^{-1}}(u)=c_{\theta'}(u,\gamma)u$$
for each $\gamma\in\Gamma$ and some $c_{\theta'}(u,\gamma)\in Z$, which yields $\Int_{g}(\gs)=G_{\theta'}$.
The reduction of structure group $F' g$ also satisfies (\ref{eq-c-theta}), since $F'g$ is the extension of structure group of $F'$ by the homomorphism $\Int_g^{-1}:G_{\theta'}\to \gs$ and, for every $s\in G_{\theta'}$ and any $\gamma\in\Gamma$,
\begin{align*}
    Z\ni c_{\theta'}(s)&=\tg'(s)s^{-1}
    \\&=\Int_g^{-1}(\tg'(s)s^{-1})
    \\&=\Int_g^{-1}\tg'\Int_g(\Int_g^{-1}(s))\Int_g^{-1}(s)^{-1}
    \\&=\tg(\Int_g^{-1}(s))\Int_g^{-1}(s)^{-1}
    \\&=c_{\theta}(\Int_g^{-1}(s)).
\end{align*}
Hence, by the previous paragraph, $F'g$ is equal to $F$ up to an element of $Z$. This proves the last statement of the proposition.

\end{proof}

\begin{remark}\label{remark-paramatrize-theta-cohomology}
In particular, Proposition \ref{prop-simple-fixed-points} shows that, when describing simple fixed points in the moduli space, we only need to consider lifts $\theta\in\homgh$ of $a$ up to conjugation by elements in $\Int(G)$. By Lemma \ref{lemma-lifts-vs-non-abelian-cohomology}, these are in bijection with $H^1_{\theta}(\Gamma,\Int(G))$.

\end{remark}

\subsection{Fixed points in the moduli space}\label{section-trivial-eta-fixed-moduli}

So far we have studied fixed points in the set of isomorphism classes of $G$-Higgs bundles. Next we will express these results in the context of moduli spaces.

\begin{proposition}\label{prop-polystability-extension-structure-group}
Let $\theta\in\homgh$, and let $\gs$ and $\liegm$ be defined as in Section \ref{section-Gtheta}. Then the following statements hold.
\begin{enumerate}
    \item If a $(\gs,\liegm)$-Higgs pair $(F,\psi)$ is polystable, the $G$-Higgs bundle $(E,\phi)$ obtained by extension of structure group is also polystable.
    \item If $(E,\phi)$ is a (semi,poly)stable $G$-Higgs bundle with a reduction of structure group to a $(\gs,\liegm)$-pair $(F,\psi)$, then $(F,\psi)$ is (semi,poly)stable.
    \item Given $g\in G$ and $\theta':=\Int_g\theta\Int_{g^{-1}}$, there is a canonical isomorphism between $\cM(X,\gs,\liegm)$ and $\cM(X,G_{\theta'},\lieg^{\theta'}_{\mu})$ making the following diagramme commute:
    \[\begin{tikzcd}
\cM(X,\gs,\liegm)\arrow{r}\arrow{d} &
\cM(X,G)\\
\cM(X,G_{\theta'},\lieg^{\theta'}_{\mu})\arrow{ur}
\end{tikzcd},
\]
where the morphisms to $\cM(X,G)$ are given by extension of structure group. For each $Y\in H^1(X,\gamtt)$, it restricts to a commutative diagramme
\[\begin{tikzcd}
\cM_{Y}(X,\gs,\liegm)\arrow{r}\arrow{d} &
\cM(X,G)\\
\cM_{Y}(X,G_{\theta'},\lieg^{\theta'}_{\mu})\arrow{ur}
\end{tikzcd},
\]
where $\cM_{Y}(X,\gs,\liegm)$ is the moduli space of $(\gs,\liegm)$-Higgs pairs $(F,\psi)$ such that $F/\gt_0\cong Y$.
\end{enumerate}
\end{proposition}
\begin{proof}
The proof of (2) is precisely the same as the proof of \cite[Proposition 5.7 (2)]{PR}. We give here the proof of (1), which also follows \cite{PR}: fix a maximal compact subgroup $K_{\theta}$ of $\gs$ and consider a maximal compact subgroup $K$ of $G$ containing it, so that $K_{\theta}=K\cap\gs$. By Lemma \ref{lemma-compact-involution} we may assume that $K$ is the fixed locus of an antiholomorphic involution $\sigma$ of $G$ satisfying
\begin{equation}\label{eq-sigma}
    \sigma(\liegm)=\lieg^{\theta}_{\omu}
\end{equation}
for every character $\mu:\Gamma\to\C^*$. Then, given a polystable $(\gs,\liegm)$-Higgs pair $(F,\psi)$, by Theorem \ref{EH1-equivariant} and Proposition \ref{prop-twisted-equivariant-bundles-one-to-one} there exists a reduction of structure group $h\in \Omega^0(F/K_{\theta})$, which satisfies (\ref{hitchin-equation-pairs}) for $V=\liegm$ and $h_V$ determined by $\sigma$ and the Killing form on $\lie g$. Let $(E,\phi)$ be the extension of structure group of $(F,\psi)$ to $G$. Using the inclusion $F/K_{\theta}\subset E/K$, we obtain an extension of $h$ to a reduction of structure group in $\Omega^0(E/K)$, which we also call $h$. The corresponding Chern--Singer connection is induced by that of $(F,\psi)$, hence the corresponding curvatures are equal. By Remark (\ref{remark-hitchin-eqs-equivalent}) $\Ad^*(\frac{i}{2}\phi\otimes\sigma_{h}(\phi))=\Lambda[\phi,\sigma_h(\phi)]$, where $\Lambda:\Omega^2(X)\to\Omega^0(X)$ is the adjoint of wedging with the K\"ahler form of volume 1 on $X$. Therefore, if $\Ad_{\theta,\mu}:\gs\to \GL(\liegm)$ is the restriction of the adjoint representation --- this is well-defined by Section \ref{section-adjoint-representation} ---, then
\begin{equation*}
    \Ad_{\theta,\mu}^*(\frac{i}{2}\psi\otimes\sigma_{h}(\psi))=\Lambda p_{\theta}[\psi,\sigma_h(\psi)]=\Lambda[\psi,\sigma_h(\psi)],
\end{equation*}
where $p_{\theta}:\lie g\to\lie g^{\theta}$ is the orthogonal projection and the last equation follows from (\ref{eq-sigma}). Hence $\Ad_{\theta,\mu}^*(\frac{i}{2}\psi\otimes\sigma_{h}(\psi))=\Lambda[\psi,\sigma_h(\psi)]=\Lambda[\phi,\sigma_h(\phi)]$. Since (\ref{hitchin-equation-pairs}) holds, so does (\ref{hitchin-equation}) for $(E,\phi)$. Therefore, by Theorem \ref{EH1}, $(E,\phi)$ is polystable.

The isomorphism in (3) is given as follows: consider a $G$-Higgs bundle $(E,\phi)$, a homomorphism $\theta\in\homgh$ and a reduction of structure group $F$ to $\gs$ such that $\phi$ is induced by a section $\psi$ of $F(\liegm)\otimes K_X$. Let $g\in G$ and $\theta':=\Int_{g}\theta \Int_{g^{-1}}$. We show that there is also a reduction $F'$ to $G_{\theta'}$ such that $\phi$ is induced by a section $\psi'$ of $F'(\lieg^{\theta'}_{\mu})\otimes K_X$. We set 
$$F':=Fg^{-1},$$
so that $F'$ is a reduction to $\Int_{g}(\gs)$. It is straightforward to see that $\Int_{g}(\gs)=G_{\theta'}$ and $\Int_{g}(\gt)=G^{\theta'}$ --- see the proof of Proposition \ref{prop-reduction} ---, therefore $g^{-1}$ induces an isomorphism from $F/\gt$ to $F'/G^{\theta'}$. Moreover, if $\psi$ is locally equal to $(e,v)\otimes k$ for some $e\in F$, $v\in\liegm$ and $k\in K_X$, we may define $\psi'$ locally as $(eg^{-1},\Ad_gv)\otimes k$. It is straightforward to see that this is a section of $F'(\lieg^{\theta'}_{\mu})\otimes K_X$.
\end{proof}

As above, let $\Gamma$ be a finite group equipped with a homomorphism
$\Gamma\to H^1(X,Z)\rtimes\outg\times\C^*$, and denote by $a:\Gamma\to \Out(G)$, $\mu:\Gamma\to\C^*$ and $\alpha\in Z^1_a(\Gamma,H^1(X,Z))$ the projections on the second, third and first factor, respectively.
Given $\theta\in\homgh$ lifting $a$, let $\cM_{\alpha}(X,\gs,\liegm)$ be the moduli space of $(\gs,\liegm)$-higgs pairs $(F,\psi)$ such that $\ctt(F)\cong\alpha$, where $\ctt$ is defined in Section \ref{section-Gtheta}. We denote by $\widetilde{\cM}_{\alpha}(X,\gs,\liegm)$ the image of $\cM_{\alpha}(X,\gs,\liegm)$ in $\cM(X,G)$. By Proposition \ref{prop-polystability-extension-structure-group}, if $\theta'=\Int_g\theta\Int_{g^{-1}}$ for some $g\in G$, then
$\widetilde{\cM}_{\alpha}(X,\gs,\liegm)=\widetilde{\cM}_{\alpha}(X,G_{\theta'},\lieg^{\theta'}_{\mu}).$

Let $\cM(X,G)^{\Gamma}$ be the fixed-point locus of the $\Gamma$-action on $\cM(X,G)$, and let $\cM_*(X,G)^{\Gamma}$ be the intersection with the stable and simple locus. Combining Propositions \ref{prop-reduction} and \ref{prop-simple-fixed-points}, we obtain the main result of this section.

\begin{theorem}\label{th-fixed-points-oscar-ramanan-higgs}
Fix $\theta\in\homgh$ lifting $a$. The following relations between moduli spaces hold:
\begin{enumerate}
    \item $$\bigcup_{[\beta]\in H^1_{\theta}(\Gamma,\Int(G))}\widetilde{\cM}_{\alpha}(X,G_{\beta\theta},\lieg^{\beta\theta}_{\mu})\subset\cM(X,G)^{\Gamma}. $$
    
    \item $$\cM_*(X,G)^{\Gamma}\subset\bigcup_{[\beta]\in H^1_{\theta}(\Gamma,\Int(G))}\widetilde{\cM}_{\alpha}(X,G_{\beta\theta},\lieg^{\beta\theta}_{\mu}).$$
    
\end{enumerate}

Moreover, the intersections 
$$\cM_*(X,G)\cap\widetilde{\cM}_{\alpha}(X,G_{\beta\theta},\lieg^{\beta\theta}_{\mu})=\cM_*(X,G)^{\Gamma}\cap\widetilde{\cM}_{\alpha}(X,G_{\beta\theta},\lieg^{\beta\theta}_{\mu})$$
are disjoint for different $[\beta]\in H^1_{\theta}(\Gamma,\Int(G))$.
\end{theorem}
\begin{remark}
If $\mu$ is trivial then the moduli spaces $\widetilde{\cM}_{\alpha}(X,G_{\beta\theta},\lieg^{\beta\theta}_{\mu})$ involved in the statement of Theorem \ref{th-fixed-points-oscar-ramanan-higgs} are moduli spaces of $\gs$-Higgs bundles.
\end{remark}

\subsection{The Prym-Narasimhan-Ramanan construction of fixed points}\label{section-prym-narasimhan-ramanan}

The Prym--Narasimhan--Ramanan construction can be now given as a combination of Theorem \ref{th-fixed-points-oscar-ramanan-higgs} and a corollary of Proposition \ref{prop-prym-narasimhan-ramanan}.


We keep the notation of Sections \ref{section-Gtheta} and \ref{section-trivial-eta-fixed-moduli}. Let $Z( \gt_0)$ be the centre of $ \gt_0$ and $\hat Y\in H^1(X,\gamtt)$, a $\gamtt$-bundle over $X$. Any connected component $Y$ of $\hat Y$ may be regarded as a connected étale cover of $X$ with Galois group $\Gamma_Y\le\gamtt$. Consider the subgroup $G_{Y}\le\gs$. This is the preimage of $\Gamma_Y$ under the quotient $\gs\to\gamtt$. Choose a lift
$\taut:\gamtt\to\Aut( \gt_0)$
of the characteristic homomorphism of the extension (\ref{eq-extension-connected-component}).
By Proposition \ref{prop-extensions-isomorphic-twisted-group}, there exists a 2-cocycle $\ct\in Z^2_{\taut}(\gamtt,Z( \gt_0))$ such that
$\gs\cong  \gt_0\times_{(\taut,\ct)}\gamtt.$
On the other hand, a $\gs$-bundle $E$ over $X$ satisfies $\ctt(E)\cong{\alpha}$ if and only if $E/ \gt_0\cong\hat Y$ for some element $\hat Y\in H^1(X,\gamtt)$ such that $\qqt(Y)\cong\alpha$, where $\qt$ is given by (\ref{eq-def-qt}).

Let $\rho_{Y}:\Gamma_Y\to \GL(\liegm)$ be the $(\taut,\ct)$-twisted representation given by the composition of the map $\Gamma_Y\to\gs$ sending $\gamma$ to $(1,\gamma)$, and the adjoint representation of $G$ on $\lie g$.

\begin{theorem}\label{th-prym-narasimhan-ramanan-oscar-ramanan-higgs}
For each homomorphism $\theta:\Gamma\to\Aut(G)$ lifting $a$ there is an isomorphism
\begin{equation}\label{eq-prym-narasimhan-ramanan}
    \bigsqcup_{\qqt(Y)\cong \alpha}\mdl(Y,\gt_0,\Gamma_Y,\tau^{\theta},c^{\theta}, \lieg^{\theta}_{\mu},\rho_Y)/Z_{\gamtt}(\Gamma_Y)\cong \mdl_{\alpha}(X,\gs,\liegm),
\end{equation}
where $Z_{\gamtt}(\Gamma_Y)$ is the centralizer of $\Gamma_Y$ in $\gamtt$, which acts on $\mdl(Y,\gt_0,\Gamma_Y,\tau^{\theta},c^{\theta}, \lieg^{\theta}_{\mu},\rho_Y)$ as explained in Proposition \ref{prop-action-centralizer}.


    
    


\end{theorem}
\begin{proof}
    Follows directly from Proposition \ref{prop-prym-narasimhan-ramanan}.
\end{proof}

\subsection{The Prym--Narasimhan--Ramanan construction for character varieties}\label{section-prym-narasimhan-ramanan-character-varieties}

We keep the notation of Section \ref{section-prym-narasimhan-ramanan}. If we suppose that $\mu$ is trivial, Theorems \ref{th-fixed-points-oscar-ramanan-higgs} and \ref{th-prym-narasimhan-ramanan-oscar-ramanan-higgs} yield a description of the fixed points of a $\Gamma$-action on the character variety $\calR(X,G)$.
The action of $\Gamma$ on $\calR(X,G)$ induced by its action on $\cM(X,G)$ via Theorem \ref{rep1} is given as follows. For each $\gamma\in\Gamma$, the corresponding $Z$-bundle $\alg$ is flat, since $Z$ is finite, and so it is given by a representation $\rho_{\gamma}:\pi_1(X)\to Z$. Given a representation $\rho:\pi_1(X)\to G$, we may multiply it by $\rho_{\gamma}$ to obtain a new representation $\rho\otimes\alpha_{\gamma}:=\rho\cdot\rho_{\gamma}$. Now, given the automorphism $\theta_\gamma$  of $G$ and  
$\rho\in \Hom(\pi_1(X,x),G)$, there is another representation of $\pi_1(X,x)$ in $G$ given by $\theta_\gamma\circ \rho$. This defines a left action of $\Aut(G)$ on 
$\calR(X,G)$ that factors through an action of $\Out(G)$.
So for every $\gamma\in \Gamma$ and  $\rho\in \Hom(\pi_1(X),G)$, we define
$\rho\cdot\gamma \in \Hom(\pi_1(X),G)$ by
\begin{equation}\label{eq-action-character-no-eta}
    \rho\cdot\gamma=\theta_\gamma^{-1}\circ (\rho\otimes\alg).
\end{equation}
It is straightforward to show --- see 
\cite{biswas-calvo-García-Prada,PR,ow} 
for a similar computation --- that the right action of $\Gamma$ on  $\calR(X,G)$ 
given by (\ref{eq-action-character-no-eta}) coincides with the action of $\Gamma$ on $\cM(X,G)$ defined in 
Section \ref{section-action} via the non-abelian Hodge correspondence. Recall
that here we are taking the character $\mu:\Gamma\to \C^*$ to be trivial.

For each lift $\theta:\Gamma\to\Aut(G)$ of $a$, let $\taut:\gamtt\to\Aut(\gt_0)$ and $\ct\in Z^1_{\taut}(\gamtt,Z(\gt_0))$ fitting in $\gs\cong\gt_0\times_{(\taut,\ct)}\gamtt$. Let $Y$ be the connected component of a $\gamtt$-bundle, with Galois group $\Gamma_Y\le\gamtt$. Then, by Proposition \ref{prop-polystability-extension-structure-group}, Theorem \ref{th-prym-narasimhan-ramanan-oscar-ramanan-higgs} and the non-abelian Hodge correspondence \ref{equivariant-nahc}, there is a morphism $\calR(Y,\gt_0,\Gamma_Y,\taut,\ct)/Z_{\gamtt}(\Gamma_Y)\to\calR(X,G)$. Here $\calR(Y,\gt_0,\Gamma_Y,\taut,\ct)$ is the moduli space  
consisting of  
 $G$-conjugacy classes of elements of   
$\Hom(\pi_1(X,\Gamma,x), G\times_{(\theta,c)}\Gamma)$
whose restriction to $\pi_1(X,\,x_1)$ is reductive, defined in Section \ref{section-non-abelian-hodge-twisted-equivariant}. Let $\wcalR(Y,\gt_0,\Gamma_Y,\taut,\ct)$ be its image. By Theorem \ref{th-prym-narasimhan-ramanan-oscar-ramanan-higgs}, the following theorem holds.

\begin{theorem}\label{th-prym-narasimhan-ramanan-character-varieties}
Let $\mu$ be trivial and fix $\theta\in\homgh$ lifting $a$. The following relations between character varieties hold:
\begin{enumerate}
    \item $$\bigcup_{[\beta]\in H^1_{\theta}(\Gamma,\Int(G)),\, q_{\beta\theta}(Y)=\alpha}\wcalR(Y,G^{\beta\theta}_0,\Gamma_Y,\tau^{\beta\theta},c^{\beta\theta})\subset\calR(X,G)^{\Gamma}. $$
    
    \item $$\calR_{\irr}(X,G)^{\Gamma}\subset\bigcup_{[\beta]\in H^1_{\theta}(\Gamma,\Int(G)),\, q_{\beta\theta}(Y)=\alpha}\wcalR(Y,G^{\beta\theta}_0,\Gamma_Y,\tau^{\beta\theta},c^{\beta\theta}),$$
    where $\calR_{\irr}(X,G)\subset\calR(X,G) $ is the subvariety of irreducible representations.
    
\end{enumerate}
\end{theorem}

\subsection{Action of a cyclic subgroup of the Jacobian on \texorpdfstring{$\mdl(X,\SL(n,\C))$}{M(X,SL(n,C))}}\label{section-example-generalize-narasimhan}
In this section we apply Theorem \ref{th-prym-narasimhan-ramanan-oscar-ramanan-higgs} to the case $G=\SL(n,\C)$ and $a$ trivial. In particular this shows that Theorem \ref{th-prym-narasimhan-ramanan-oscar-ramanan-higgs} gives the construction in \cite{narasimhan-ramanan} when applied to $\SL(n,\C)$-bundles, or vector bundles with trivial determinant. 

Let $L$ be a line bundle over $X$ with order $r$, which may be seen as an element of $H^1(X,\Z/r\Z)$, and assume that $n=rm$ is divisible by $r$ --- otherwise there are no fixed points, since $E\otimes L\cong E$ implies $(\det E)\otimes L^{n}\cong\det E$. Let $\Gamma<H^1(X,\Z/r\Z)$ be the subgroup generated by $L$ and consider a homomorphism
$$\mu:\Gamma\to\C^*;\,\gamma\mapsto\mug.$$
We want to identify the fixed points of the $\Gamma$-action on $\cM(X,\SL(n,\C))$ such that $L$ sends $(E,\phi)$ to $(E\otimes L,\mu_L\phi)$. 

The fixed points of the tensorization action of $\Gamma$ on $M(X,\GL(n,\C))$ are given in \cite[Section 4.1]{prym-narasimhan-ramanan}. The calculation for $\cM(X,\SL(n,\C))$ is similar apart from what concerns the Higgs fields, so we don't go into much detail. First of all, every class in $H^1(\Z/r\Z,\Int(\SL(n,\C)))$ is represented by an automorphism of the form $\theta:=\Int_D$, where
$$D:=\begin{pmatrix}
    I_{p_1}&0&\ldots&0\\
    0&\zeta I_{p_2}&\ldots&0\\
    \vdots&\vdots&\ddots&\vdots\\
    0&0&\ldots&\zeta^{r-1}I_{p_r}
    \end{pmatrix},$$
$\zeta$ is a primitive $r$-th root of unity and $p_1+\ldots+p_r=n$. The subgroup of fixed points $\SL(n,\C)^{\theta}$ is equal to 
$\mathrm{S}(\GL(p_1,\C)\times\GL(p_2,\C)\times\ldots\times\GL(p_d,\C))$, and $\SL(n,\C)_{\theta}$ consists of matrices $A\in\SL(n,\C)$ sending the $\zeta^i$-eigenspace of $D$ to the $\zeta^{i+k}$-eigenspace, for every $i$ and some $k$ depending on $A$. By Theorem \ref{th-fixed-points-oscar-ramanan-higgs} the smooth fixed point locus $\cM_*(X,\SL(n,\C))^{\Gamma}$ is empty unless the homomorphism
$$\cct:\SL(n,\C)_{\theta}\to\Z/r\Z$$
is surjective. This is true if and only if all the eigenspaces have the same dimension, i.e. $p_1=\dots=p_d=m$. We assume this from now on, so that the corresponding class $[\theta]$, where $\theta=\Int_D$, is the only one appearing in the decomposition of Theorem \ref{th-fixed-points-oscar-ramanan-higgs}.

Hence, $\SL(n,\C)_{\theta}$ is a $\Z/r\Z$-extension of $\SL(n,\C)^{\theta}$ generated by $\SL(n,\C)^{\theta}$ and the permutation matrix
$$S:=\xi\begin{pmatrix}
    0&I_m&0&\ldots&0\\
    0&0&I_m&\ldots&0\\
    \vdots&\vdots&\ddots&\ddots&\vdots\\
    0&0&0&\ldots&I_m\\
    I_m&0&0&\ldots&0\\
    \end{pmatrix},$$
where $\xi$ is a $r$-th root of $(-1)^{r-1}$ ensuring that $\det S=1$. Since $S^r=(-1)^{r-1}$, the 2-cocycle involved is a map
$$c:\Z/r\Z\times\Z/r\Z\to Z(\GL(n,\C)^{\theta})\cong (\C^*)^{r};(a,b)\mapsto
\begin{cases}
1\quad\text{if}\;a+b<r\\
(-1)^{r-1}\quad\text{if}\;a+b\ge r,
\end{cases}$$
where we are identifying the elements of $\Z/r\Z$ with integers between $0$ and $r-1$.
This provides an isomorphism
$$\nslm\cong \slm \times_{(\Int_S,c)}\Z/r\Z,$$
where we denote by $\Int_S$ the homomorphism $\Z/r\Z\to\Aut(\slm)$ such that the image of a generator $\zeta\in\Z/r\Z$ is equal to $\Int_S$, by abuse of notation.

Let 
$$p_L:X_L\to X$$ 
be the étale $r$-cover of $X$ corresponding to $L$ and let $\zeta$ be a generator of its Galois group $\Gamma_L$, isomorphic to $\Z/r\Z$. According to Theorem \ref{th-fixed-points-oscar-ramanan-higgs}, elements in $\mdl_*(X,\SL(n,\C))^{\Gamma}$ correspond to $(\Int_S,c)$-twisted $\Z/r\Z$-equivariant $(\slm,\sll(n,\C)^{\theta}_{\mu})$-Higgs pairs over $X_L$. Let $(F,\bullet)$ be a $(\Int_S,c)$-twisted $\Z/r\Z$-equivariant $\slm$-bundle. The corresponding vector bundle $F(\C^n)$ is a direct sum of vector bundles of rank $m$. 
The equivariant $\Z/r\Z$-action given by Proposition \ref{prop-associated-bundle-equivariant} using the natural embedding of $\SL(n,\C)_{\theta}$ in $\GL(n,\C)$ is such that the generator $\zeta\in\Z/r\Z$ sends $(e,v)\in F(\C^n)$ to $(e\bullet\gamma,S^{-1}v)$. Since $S$ exchanges the different copies of $\GL(m,\C)$ in $\SL(n,\C)^{\theta}$, the $\Z/r\Z$-equivariant action on $F(\C^n)$ exchanges its summands. Therefore, there is a decomposition $F(\C^n)\cong V\oplus \zeta^*V\oplus\dots\oplus\zeta^{*r-1}V$. If $E$ is the $\nslm$-bundle over $X$ corresponding to $(F,\bullet)$ via Theorem \ref{th-prym-narasimhan-ramanan-oscar-ramanan-higgs} then, by Proposition \ref{prop-associated-bundle-equivariant},
\begin{equation*}
    E(\C^n)\cong F(\C^n)/(\Z/r\Z)\cong p_{L*}V.
\end{equation*}
Conversely, using \ref{prop-associated-bundle-equivariant} one may also show that, for any vector bundle $W\to X$ with trivial determinant associated to a $\nslm$-bundle whose quotient by $\SL(n,\C)^{\theta}$ is isomorphic to $L$, $W$ is isomorphic to $p_{L*}V$ for some vector bundle $V\to X_L$ of rank $m$.

Now assume that $(F,\bullet)$ is equipped with a $\Z/r\Z$-invariant Higgs field 
$$\psi\in H^0(X_L,F(\sll(n,\C)^{\theta}_{\mu})\otimes K_X).$$ It is straightforward to see that $\sll(n,\C)_{\mu}^{\theta}$ is the vector space of traceless matrices $A$ sending the $\zeta^k$-eigenspace of $D$ to the $\zeta^k\mu(L)$-eigenspace, for each $k\in\Z$. The homomorphism $\pi_1(X)\to\C^*$ determined by $L$ induces a canonical isomorphism $\Gamma_L\cong \Gamma^*:=\Hom(\Gamma,\C^*)$. Regarding $\mu$ as an element of $\Gamma_L$ and using the $\Gamma_L$-equivariance, we find that the Higgs field on $V$ must be induced by homomorphisms $\zeta^{k*}V\to\mu^{*}\zeta^{k*}V\otimes K_L$ obtained by pulling back a homomorphism 
$$\psi:V\to\mu^{*}V\otimes K_L$$
by the elements of $\gal(X_L/X)$, where $K_L$ is the canonical bundle of $X_L$. 

In summary, the following proposition holds.
\begin{proposition}
A simple $\SL(n,\C)$-Higgs bundle $(E,\phi)$ with trivial determinant over $X$ is isomorphic to $(E\otimes L,\mu(L)\phi)$ if and only if its associated vector bundle is the pushforward of a vector bundle $V\to X_L$ of rank $m$ and $\phi$ is induced by a homomorphism
$$\psi:V\to\mu^{*}V\otimes K_X.$$
\end{proposition}

\begin{remark}
Given a vector bundle $V\to X_L$, the condition that the determinant of $p_{L*}V$ is trivial is equivalent to the norm of $\det(V)$ being isomorphic to $L^{(r-1)/2}$ \cite{narasimhan-ramanan} --- recall that the norm of $\det(V)$ is the quotient of $\det(V)\otimes \zeta^*\det(V)\otimes\dots\otimes\zeta^{*r-1}\det(V)$ by the equivariant permutation $\Z/r\Z$-action.
\end{remark}

\begin{corollary}\label{cor-U(m,m)}
Let $L$ be a line bundle of order 2 over $X$ with corresponding \'etale cover $X_L\to X$. Consider the involution of $\cM(X,\SL(2m,\C))$ sending each Higgs bundle $(E,\phi)$ to $(E\otimes L,-\phi)$. Then, a smooth point is fixed under this action if and only if its associated vector bundle is the pushforward of a vector bundle $V\to X_L$ of rank $m$ equipped with a homomorphism $\psi:V\to\zeta^*V\otimes K_L$, where $\zeta$ is the generator of $\gal(X_L/X)$ and $K_L$ is the canonical bundle of $X_L$.
\end{corollary}

\begin{remark}
With data $(V,\psi)$ as in Corollary \ref{cor-U(m,m)}, the higgs bundle $(V\oplus\zeta^*V,\psi\oplus\zeta^*\psi)$ is a $U(m,m)$-Higgs bundle on $X_L$.
\end{remark}

\subsection{Action of an arbitrary finite subgroup of the Jacobian on \texorpdfstring{$\mdl(X,\SL(n,\C))$}{M(X,SL(n,C))}}\label{section-jacobian}
Let $\Gamma\le J(X)\cong H^1(X,\C^*)$ be a finite subgroup and set $a$ and $\mu$ to be trivial. There is a $\Gamma$-action on $\cM(X,\SL(n,\C))$ given by tensorization. In this section we regard $\cM(X,\SL(n,\C))$ as the moduli space of Higgs bundles of rank $n$ and trivial determinant.

An \textbf{antisymmetric pairing} on $\llambda$ is a homomorphism $l:\llambda\to\llambda^*$, where $\llambda^*:=\Hom(\llambda,\C^*)$ and the associated pairing satisfies $\langle\ambda,\ambda\rangle=1$ for every $\ambda\in\llambda$. Given such an antisymmetric pairing $l$, a subgroup $\Delta\le\llambda$ is called \textbf{isotropic} if the pairing of $\Delta$ with itself is trivial. For each choice of antisymmetric pairing choose a maximal isotropic subgroup $\Delta\le\llambda$. Note that the embedding of $\Delta$ in $J(X)$ is an element of $\Hom(\Delta, H^1(X,\C^*))\cong H^1(X,\Delta^*)$, Hence it defines a connected \'etale cover $\pd:\xd\to X$. This has Galois group $\Delta^*$ by \cite[Proposition 5.6]{prym-narasimhan-ramanan}.

Consider the moduli space $\cM(\xd,\GL(n/\vert\Delta\vert,\C))$ of $\GL(n/\vert\Delta\vert,\C)$-Higgs bundles over $\xd$ --- of course, this is empty if $n/\vert\Delta\vert$ is not an integer. For each antisymmetric pairing $l$, denote by 
$\cM(\xd,\GL(n/\vert\Delta\vert,\C))^{l(\llambda)}$ the subvariety of $\cM(\xd,\GL(n/\vert\Delta\vert,\C))$ consisting of isomorphism classes of polystable Higgs bundles $(E,\phi)$ such that $(E,\phi)\cong (l(\ambda)^*E\otimes \pd^*\ambda,l(\ambda)^*\phi)$ for every $\ambda\in\llambda$, where $l(\ambda)$ is regarded as an element of $\Delta^*$ by restriction. There is a natural pushforward morphism
\begin{equation*}
p_{\Delta*}:\cM(\xd,\GL(n/\vert\Delta\vert,\C))^{l(\Gamma)}\to \cM(X,\GL(n,\C)).
\end{equation*}

For each antisymmetric pairing $l$ of $\llambda$, fix a maximal isotropic subgroup $\Delta\le \llambda$. Consider the 1-dimensional representation $\Delta^*\to\GL(\bigwedge^{\text{top}} \C[\Delta^*])$, induced by the permutation representation of $\Delta^*$. This is a character of $\Delta^*$, hence it determines an element $\delta_l\in\Delta$. Moreover, $\delta_l$ has order at most two. Given a $\GL(n/\vert\Delta\vert,\C)$-Higgs bundle $(E,\phi)$ over $\xd$, by an argument similar to the proof of \cite[Proposition 3.23]{nasser}, we conclude
\begin{equation*}
    \det p_{\Delta*}E=\Nmd(\det E)\otimes \delta_l,
\end{equation*}
where $\Nmd:J(\xd)\to J(X)$ is the norm map. Let
\begin{equation*}
\vartheta_{l}:=\Nmd\circ\det:\cM(\xd,\GL(n/\vert\Delta\vert))\to J(X)
\end{equation*}
and consider the subvariety $\vartheta_{l}^{-1}(\delta_l)^{l(\llambda)}\subset \cM(\xd,\GL(n/\vert\Delta\vert))^{l(\llambda)}$.

Our main result in this setting is the following.
\begin{theorem}\label{th-finite-subgroup-jacobian}
For each antisymmetric pairing $l$, choose a corresponding maximal isotropic subgroup $\Delta\le\Gamma$. Then the inclusions
$$\bigcup_{l}p_{\Delta*}\vartheta_{l}^{-1}(\delta_l)^{l(\llambda)}\subset\cM(X,\SL(n,\C))^{\llambda}$$
and
$$\cM_*(X,\SL(n,\C))^{\llambda}\subset \bigcup_{l}p_{\Delta*}\vartheta_{l}^{-1}(\delta_l)^{l(\llambda)}$$
hold.
Here the parameter $l$ runs over all antisymmetric pairings on $\llambda$ such that the order of $\Delta$ divides $n$.
\end{theorem}
\begin{proof}
Same as the proof of \cite[Theorem 5.15]{prym-narasimhan-ramanan}, taking account of the Higgs field.
\end{proof}



\subsection{Action of a line bundle of order 2 on \texorpdfstring{$\mdl(X,\Sp(2n,\C))$}{M(X,Sp(2n,C))}}\label{section-example-sp-finite}
Let $L\in H^1(X,\Z/2\Z)$ be a line bundle of order 2 and let $X_L$ be the corresponding étale cover of degree two over $X$. Consider the involution of $\cM(X,\Sp(2n,\C))$ which sends $(E,\phi)$ to $(E\otimes L,\pm\phi)$. In other words, the generator of $\Gamma$ is sent to $(L,1,\pm 1)\in H^1(X,\Z/2\Z)\rtimes\Out(\Sp(2n,\C))\times\C^*$. Note that $\Out(\Sp(2n,\C))$ is trivial and the centre of $\Sp(2n,\C)$ is $\Z/2\Z$, so this covers all the possible subgroups of order 2 of $H^1(X,\Z/2\Z)\rtimes\Out(\Sp(2n,\C))\times\C^*$.

Consider the embedding of $\Sp(2n,\C)$ in $\GL(2n,\C)$ associated to the standard symplectic form, given by the matrix
\begin{equation}\label{eq-S-symplectic-form}
    S:=
\begin{pmatrix}
0&I_m\\-I_m&0
\end{pmatrix}.
\end{equation}
According to \cite[Chapter X, Section 2.3]{helgason}, the conjugacy classes of involutions in 
$$\Aut(\Sp(2n,\C))/\Int(\Sp(2n,\C))$$ 
are represented by $\Int_S$ and $\Int_{K_{p,q}}$, where 
$$K_{p,q}:=i\begin{pmatrix}
    -I_p & 0 & 0 & 0\\
    0 & I_q & 0 & 0\\
    0 & 0 & -I_p & 0\\
    0 & 0 & 0 & I_q 
    \end{pmatrix}$$
    and $p+q=n$.
The matrix $S$ is conjugate to
$$
S'=\begin{pmatrix}
    i & 0\\
    0 & -i
    \end{pmatrix}.$$
Set $\theta:=\Int_{S'}$ and $\tau_{p,q}:=\Int_{K_{p,q}}$. By Theorem \ref{th-fixed-points-oscar-ramanan-higgs},
\begin{align*}
    \cM_*(X,\Sp(2n,\C))^{\Gamma}=\\\cM_*(X,\Sp(2n,\C))\cap\left(\widetilde{\cM}_{L}(X,\Sp(2n,\C)_{\theta},\lieg^{\theta}_{\pm1})\cup\bigcup_{p+q=n}\widetilde{\cM}_{L}(X,\Sp(2n,\C)_{\tau_{p,q}},\lieg^{\tau_{p,q}}_{\pm1})\right),
\end{align*}
where the subscript $L$ indicates that the quotient by $\Sp(2n,\C)^{\theta}$ and $\Sp(2n,\C)^{\tau_{p,q}}$, respectively, is a $\Z/2\Z$-bundle isomorphic to $L$. Since the group $\Sp(2n,\C)_{\tau_{p,q}}$ is equal to $\Sp(2n,\C)^{\tau_{p,q}}$ unless $p=q$, we actually have 
\begin{align}\label{eq-sp-fixed-points}
    \cM_*(X,\Sp(2n,\C))^{\Gamma}=\\
    \cM_*(X,\Sp(2n,\C))\cap\left(\widetilde{\cM}_{L}(X,\Sp(2n,\C)_{\theta},\lieg^{\theta}_{\pm1})\cup\widetilde{\cM}_{L}(X,\Sp(2n,\C)_{\tau},\lieg^{\tau}_{\pm1})\right),\nonumber
\end{align}
where $\tau:=\tau_{n/2,n/2}$ and the second component is only present if $n$ is even. From now on, whenever $n/2$ appears it will be implicit that $n$ is even.

By Theorem \ref{th-prym-narasimhan-ramanan-oscar-ramanan-higgs}, each of the components on the right hand side of (\ref{eq-sp-fixed-points}) is the image of a moduli space of twisted equivariant bundles with structure group $\Sp(2n,\C)^{\theta}$ and $\Sp(2n,\C)^{\tau}$ respectively. From the definitions of $S'$ and $K_{n/2,n/2}$, which are diagonal with two eigenvalues, we know that these fixed points are the symplectic matrices preserving two subspaces of $\C^{2n}$ of dimension $n$ --- of course, the subspaces are different for $\theta$ and $\tau$. The $-1$-weight spaces in $\lie{sp}(2n,\C)$ for the action of $\theta$ and $\tau$ consist of matrices which permute the corresponding subspaces. According to \cite[Section 4.2]{prym-narasimhan-ramanan}, the underlying twisted equivariant principal bundles are in correspondence with the following objects, corresponding to $\theta$ and $\tau$ respectively.
\begin{enumerate}
    \item Vector bundles $V\to X_L$ of rank $n$ equipped with an isomorphism $f:V\xrightarrow{\sim}\lambda^*V^*$ satisfying $\lambda^*f^*=-f$, where $\lambda\in\Gamma^*=\gal(X_L/X)$ is the generator. The corresponding vector bundle on $X$ is the pushforward $p_{L*}V$, and the symplectic form is the one induced by the standard one on $V\oplus V^*$ via $f$.
    \item Symplectic vector bundles $E$ of rank $n$ on $X_L$. Again, the corresponding vector bundles on $X$ are the pushforwards.
\end{enumerate}
Let $K_L$ is the canonical bundle of $X_L$. The Higgs field has values in the $\pm 1$-weight space of the corresponding involution. In the $+1$ case this means that it preserves $E$, in the sense that it is given by a homomorphism $E\to E\times K_L$, whereas in the $-1$ case it is determined by a $\lambda^*$-invariant homomorphism $E\to\lambda^*E\otimes K_L$, i.e. the Higgs field exchanges $E$ and $\lambda^*E$. Summing up, we obtain the following result.

\begin{proposition}\label{prop-Sp-example}
Let $L$ be a line bundle of order 2 over $X$ with corresponding \'etale cover $p_L:X_L\to X$ and Galois group $\Gamma_L$ generated by $\lambda$. Let $K_L$ be the canonical bundle of $X_L$. Consider the automorphism of $\cM(X,\Sp(2n,\C))$ sending $(E,\phi)$ to $(E\otimes L,\mu\phi)$, where $\mu=\pm 1$. Then the smooth fixed points in $\cM(X,\Sp(2n,\C))$ are pushforwards by $p_L$ of the following two types of Higgs bundles with extra structure.
\begin{enumerate}
    \item A vector bundle $V$ of rank $n$ over $X_L$ equipped with a homomorphism $\psi:V\to\mu^*V\otimes K_L$ and an isomorphism $f:(V,\psi)\xrightarrow{\sim}\lambda^*(V^*,\psi^*)$ compatible with $\psi$ such that $\lambda^*f^*=-f$, where $\mu$ is regarded as an element of $\Gamma_L\cong\Z/2\Z$.
    \item (Assuming that $n$ is even,) a symplectic vector bundle $V$ of rank $n/2$ over $X$, equipped with a homomorphism $\psi:V\to\mu^*V\otimes K_L$ compatible with the symplectic form.
\end{enumerate}
Conversely, such pushforwards are fixed points.
\end{proposition}

\begin{remark}
Let $\sigma$ be the compact conjugation of $\Sp(2n,\C)$ whose fixed point subgroup is the compact symplectic group $\Sp(2n,\C)\cap U(n)$. Under the given embedding in $\GL(2n,\C)$, this is just the composition of conjugating, transposing and inverting, and it can be seen with a straightforward computation that it commutes with $\theta$ and $\tau$. Then $\sigma\theta$ is in the orbit of $\sigma\Int_S$ by the conjugation action of $\Int(\Sp(2n,\C))$, where $S$ is given by (\ref{eq-S-symplectic-form}). Since $\Int_S$ is just taking transpose and inverse on $\Sp(2n,\C)$, the antiholomorphic involution $\sigma\Int_S$ is just conjugation of matrices. The fixed point real subgroup $\Sp(2n,\C)^{\sigma\theta}$ is therefore a conjugate of $\Sp(2n,\R)$. The fixed point subgroup $\Sp(2n,\C)^{\sigma\tau}$ of the involution $\tau$ is equal to $\Sp(n/2,n/2)$ \cite[Chapter X, Section 2.3]{helgason}. Let $\NSp(2n,\R)$ and $\NSp(n/2,n/2)$ denote the respective normalizers in $\Sp(2n,\C)$. Their quotients by their connected components, which are $\Sp(2n,\R)$ and $\Sp(n/2,n/2)$ respectively, are both isomorphic to $\Z/2\Z$. According to \cite[Section 8.2]{PR} the non-abelian Hodge correspondence provides homeomorphisms 
\begin{align*}
    &\cM(X_L,\Sp(2n,\C)^{\theta},\lieg^{\theta}_{-1})\cong \calR(X_L,\Sp(2n,\R))\andd\\ &\cM(X_L,\Sp(2n,\C)^{\tau},\lieg^{\tau}_{-1})\cong \calR(X_L,\Sp(n/2,n/2)),
\end{align*}
where $\calR(X_L,G)$ denotes the character variety of $\pi_1(X_L)$ with values in $G$.
Combining this with the discussion above, we see that fixed points of the $\Z/2\Z$-action on $\mdl(X,\Sp(2n,\C))$ such that $-1$ sends $(E,\phi)$ to $(E\otimes L,-\phi)$ correspond to twisted equivariant $\Sp(2n,\R)$ and $\Sp(n/2,n/2)$-Higgs bundles on $X_L$.
\end{remark}

\subsection{Tensorization by a line bundle combined with dualization for \texorpdfstring{$G=\SL(n,\C)$}{G=SL(n,C)}}

If $n\ge 3$, the group of outer automorphisms of $\SL(n,\C)$ is isomorphic to $\Z/2\Z$. A lift of its generator $a$ is given by the automorphism $\theta$ sending $A$ to $A^{t-1}$. Given an arbitrary line bundle $L$, 
$$L\theta(L)=LL^*\cong\oo,$$
the trivial bundle. Hence, there is a homomorphism
$$\Gamma:=\Z/2\Z\to H^1(X,\C^*)\rtimes\Out(\SL(n,\C));\,-1\mapsto(L,a).$$
According to \cite[Chapter X, Section 2.3]{helgason}, every class of $H^1_{\theta}(\Z/2\Z,\PGL(n,\C))$ is represented either by $\theta$ or, in case that $n=2m$ is even, by $\nu:=\Int_S\theta$, where $S$ is given by (\ref{eq-S-symplectic-form}).

By \cite[Proposition 2.10]{PR}, the groups $\SL(n,\C)_{\theta}$ and $\SL(n,\C)_{\nu}$ defined in Section \ref{section-Gtheta} are equal to the normalizers $\NSO(n,\C)$ and $\NSp(2m,\C)$ of $\SO(n,\C)$ and $\Sp(2m,\C)$ in $\SL(n,\C)$, respectively. Recall from Section \ref{section-Gtheta} that $c_{\theta}$ is the homomorphism which sends $g\in \NSO(n,\C)$ to $\theta(g)g^{-1}$, an element of the centre of $\SO(n,\C)$, and we may define $c_{\nu}$ similarly. For the symplectic group consider the group $\mathcal A$ of invertible matrices
$$A_\zeta:=\begin{pmatrix}
I_m&0\\0&\zeta^{-1}I_m
\end{pmatrix},$$
where $\zeta\in\C^*$. They satisfy
$$c_{\nu}(A_\zeta)=\zeta I_n,$$
so that $c_{\nu}$ restricts to an isomorphism $\mathcal A\cong\C^*$. Since $\C^*$ coincides with the centre of $\GL(2m,\C)$, every coset of the normalizer of $\Sp(2m,\C)$ in $\GL(2m,\C)$ must be represented by a matrix $A_{\zeta}$. Classes in the quotient $\NSp(2m,\C)/\Sp(2m,\C)$ must then be in bijection with elements of $\mathcal A$ with determinant 1, which are exactly the matrices $A_\zeta$ such that $\zeta$ is an $m$-th root of unity. Consider the homomorphism
$$\Int_A:\Z/m\Z\to\Int(\Sp(2m,\C));\,\zeta\mapsto \Int_{A_{\zeta}},$$
where $\zeta$ is regarded as an $m$-th root of 1.
Then we obtain an isomorphism
\begin{equation}\label{eq-normalizer-sp}
    \NSp(2m,\C)\cong \Sp(2m,\C)\times_{(\Int_{A},1)}\Z/m\Z=\Sp(2m,\C)\rtimes_{\Int_{A}}\Z/m\Z.
\end{equation}

For the orthogonal group, given $\zeta\in \C^*$, 
$$c_{\theta}(\zeta I_n)=\zeta^{-2}I_n.$$
Therefore, by varying $\zeta$ we obtain the whole centre of $\GL(n,\C)$. Classes in the quotient $\NSO(n,\C)/\SO(n,\C)$ are represented by scalar matrices with determinant 1, which are just $n$-th roots of unity. When $n$ is odd, taking the $-2$-power is an automorphism of the group of $n$-th roots $\Z/n\Z$, so that $\NSO(n,\C)$ is a semidirect product of $\SO(n,\C)$ with $\Z/n\Z$ given by conjugation by an element of the centre. Since this conjugation is trivial, $\NSO(2m+1,\C)\cong\SO(2m+1,\C)\times \Z/n\Z$.
However, when $n=2m$ is even, the restriction of $c_{\theta}$ to $\Z/n\Z$ is not an isomorphism and so a 2-cocycle appears. In this case we may restrict to $n$-th roots of unity with argument less than $\pi$ to give a bijection between $\NSO(2m,\C)$ and $\SO(2m,\C)\times\Z/m\Z$. Since conjugation by elements in the centre is trivial, so is the characteristic homomorphism of the extension $\NSO(2m,\C)$ of $\SO(2m,\C)$. The cocycle $c$ is defined as follows: define a map
$$\sigma:\Z/n\Z\to\Z/2\Z$$
which assigns an $n$-th root to $1$ if its argument is less than $\pi$ and $-1$ otherwise. Then set
$$c:\Z/n\Z\times\Z/n\Z\to \Z/2\Z;\, (\delta,\delta')\mapsto \sigma(\delta^{\frac 12}\delta'^{\frac 12}),$$
where $\delta^{\frac 12}$ is the unique square root of $\delta$ with argument less than $\pi$.
Then there are isomorphisms
\begin{equation}\label{eq-normalizer-so}
    \NSO(n,\C)\cong\begin{cases}  \SO(n,\C)\times \Z/n\Z    &   \text{if $n$ is odd}\\
                                \SO(n,\C)\times_{(1,c)} \Z/m\Z   &   \text{if $n=2m$,}
\end{cases}
\end{equation}
whose inverse isomorphism sends $(M,\zeta)\in \SO(n,\C)\times \Z/n\Z$ (or $\SO(n,\C)\times_{(1,c)} \Z/m\Z$) to $\zeta^{-\frac12}M$, where $\zeta^{-\frac12}$ is the unique root of $\zeta^{-1}$ in $\Z/n\Z$ (or the one with argument less than $\pi$, respectively).

\begin{remark}
Note that the same reasoning that gives the even case in (\ref{eq-normalizer-so}) also provides an isomorphism
\begin{equation*}
    \NSp(2m,\C)\cong \Sp(2m,\C)\times_{(1,c)}\Z/m\Z.
\end{equation*}
Hence there are two descriptions of $\NSp(2m,\C)$, one with trivial 2-cocycle and one with a trivial lift of the characteristic homomorphism.
\end{remark}

Let $X_L$ be the étale cover of $X$ associated to $L$, which has Galois group $\Gamma_L\cong\Z/r\Z$. Note that $\Gamma_L$ is canonically embedded in $\C^*$ as the subgroup of $r$-th roots of 1, since it is the Galois group of a reduction of structure group of the $\C^*$-bundle corresponding to $L$. We denote by $\mdl(X_L,\SO(n,\C),{\Gamma_L})$ and --- if $n=2m$ is even --- $\mdl(X_L,\Sp(2m,\C),{\Gamma_L})$ the varieties of $\SO(n,\C)$ and $\Sp(2m,\C)$-Higgs bundles whose corresponding orthogonal or symplectic vector Higgs bundle $(V,\psi)$ is equipped with a $\Gamma_L$-equivariant action such that 
\begin{equation}\label{eq-form-multiplied}
    \langle v\cdot\zeta,v'\cdot\zeta\rangle=\zeta\langle v,v\rangle
\end{equation}
for each $v,v'\in V$ and $\zeta\in\Gamma_L\cong\Z/r\Z$, where $\langle \cdot,\cdot\rangle$ denotes the orthogonal or symplectic form. Let $\wcM(X_L,\SO(n,\C),{\Gamma_L})$ and $\wcM(X_L,\Sp(2m,\C),{\Gamma_L})$ be their images in $\cM(X,\SL(n,\C))$ given by taking the quotient by the $\Gamma_L$-equivariant action. 

\begin{proposition}
    Let $\Gamma:=\Z/2\Z$ act on $\mdl(X,\SL(n,\C))$ in such a way that the generator of $\Gamma$ sends $(E,\phi)$ to $(E^*\otimes L,\phi^*)$, where $L$ is a line bundle of order $r$. Then $\mdl_*(X,\SL(n,\C))^{\Gamma}$ is empty unless $r$ divides $n$, if $n$ is odd, or unless $r$ divides $m=n/2$ otherwise. In this case
    \begin{equation*}
        \wcM(X_L,\SO(n,\C),{\Gamma_L})\cup \wcM(X_L,\Sp(2m,\C),{\Gamma_L})\subset \mdl(X,\SL(n,\C))^{\Gamma}
    \end{equation*}
    and
    \begin{equation*}
        \mdl_*(X,\SL(n,\C))^{\Gamma}\subset \wcM(X_L,\SO(n,\C),{\Gamma_L})\cup \wcM(X_L,\Sp(2m,\C),{\Gamma_L}),
    \end{equation*}
    where $m:=n/2$ and $\wcM(X_L,\Sp(2m,\C),{\Gamma_L})$ is empty unless $n$ is even.
\end{proposition}

\begin{proof}
    Throughout this proof we deal with orthogonal and symplectic bundles at the same time by making use of square brackets, assuming that $n=2m$ is even in the symplectic case. Consider a twisted $\Gamma_L$-equivariant $\SO(n,\C)$ [$\Sp(2m,\C)$]-Higgs bundle $(F,\bullet,\psi)$ over $X_L$. Here we are omitting the twisting data. This data is $(1,1)$ or $(1,c)$ for $\SO(n,\C)$ depending on whether $n$ is odd or even respectively [$(\Int_A,1)$ for $\Sp(2m,\C)$]. The corresponding vector Higgs bundle $(F(\C^n),\psi)$ inherits a $\Gamma_L$-equivariant action by Proposition \ref{prop-associated-bundle-equivariant}, given by 
    \begin{equation*}
        F(\C^n)\ni(e,v)\cdot\zeta:=(e\bullet\zeta,\zeta^{\frac12}v)\,[(e\bullet\zeta,A_{\zeta}^{-1} v)]
    \end{equation*} 
    for each $\zeta\in\Gamma_L$.
    The linear transformation $\zeta^{\frac12}I_n$ [$A_\zeta^{-1}$] multiplies the usual orthogonal [symplectic] form on $\C^n$ by $\zeta$, hence the $\Gamma_L$-equivariant action on $F(\C^n)$ satisfies \ref{eq-form-multiplied}.

    Conversely, take a vector Higgs bundle $(V,\psi)$ equipped with an orthogonal [symplectic] form and a $\Gamma_L$-equivariant action satisfying \ref{eq-form-multiplied}. The bundle of frames $P$ of $V$ inherits a $\Gamma_L$-equivariant action, which we denote by $*$. It also has a reduction of structure group $F$ to $\SO(n,\C)$ [$\Sp(2m,\C)$], given by the orthogonal [symplectic] form. By (\ref{eq-form-multiplied}), $F* \zeta=F\zeta^{\frac 12}I_n$ [$FA_{\zeta}^{-1}$] for each $\zeta\in \Gamma_L$. This is because the coset of $\SO(n,\C)$ [$\Sp(2m,\C)$] in $\NSO(n,\C)$ [$\NSp(2m,\C)$] consisting of matrices which multiply the standard orthogonal [symplectic] form by $\zeta$ is equal to $\SO(n,\C)\zeta^{\frac 12}I_n$ [$\Sp(2m,\C)A_{\zeta}^{-1}$]. Therefore, the corresponding automorphism of $P$ sending $p\in P$ to $p\bullet\zeta:=(p*\zeta) \zeta^{-\frac 12}I_n$ [$(p*\zeta) A_{\zeta}$] preserves $F$. By Proposition \ref{prop-associated-bundle-equivariant}, $F$ inherits a twisted $\Gamma_L$-equivariant action.

    The proposition now follows from Theorems \ref{th-fixed-points-oscar-ramanan-higgs} and \ref{th-prym-narasimhan-ramanan-alpha-trivial-higgs}, Equations (\ref{eq-normalizer-sp}) and (\ref{eq-normalizer-so}) and Proposition \ref{prop-associated-bundle-equivariant}.
\end{proof}

\begin{remark}
    If $r$ is odd, $\Gamma_L$-equivariant actions on an orthogonal or symplectic bundle satisfying (\ref{eq-form-multiplied}) are in bijection with $\Gamma_L$-equivariant actions preserving the form, via rescaling the action of each $\zeta\in\Gamma_L$ by $\zeta^{-\frac12}$. Note that the square root homomorphism is well defined on $\Gamma_L$ in this case, since $\Gamma_L$ does not contain $-1$. 
\end{remark}

\section{Fixed points for general \texorpdfstring{$\Gamma$}{Gamma}}\label{section-general}

Throughout this section we fix a finite group $\Gamma$, a compact Riemann surface $X$ and a connected semisimple complex Lie group $G$ with centre $Z$ and Lie algebra $\lie g$.

\subsection{Étale covers and lifts of \texorpdfstring{$a$}{a}}\label{section-group-theory}
Let $\eta:\Gamma\to\Aut(X)$ and $a:\Gamma\to\Out(G)$ be homomorphisms. 
Pick a homomorphism
$$\theta:\ker\eta\to\Aut(G)$$
lifting $a\vert_{\ker\eta}$. With definitions as in Section \ref{section-Gtheta} there is an extension
\begin{equation}\label{eq-extension-connected-component-gt}
    1\to\gt_0\to\gt\xrightarrow{\pt}\gamtz\to1,
\end{equation}
restricting (\ref{eq-extension-connected-component}), where $\gt_0$ is the connected component of the identity and $\gamtz$ is a (finite) subgroup of $\gamtt$. Let $\otau:\gamtz\to\Out(\gt_0)$ be the characteristic homomorphism of (\ref{eq-extension-connected-component-gt}). By \cite{BGGM} there exists a lift $\tau:\gamtz\to\Aut(\gt_0)$ of the characteristic homomorphism of (\ref{eq-extension-connected-component-gt}), and consequently by Proposition \ref{prop-extensions-isomorphic-twisted-group} we can find $c\in\zzgt$ such that there is an isomorphism $\gt\cong\gt_0\times_{(\tau,c)}\gamtz$ as extensions of $\gt_0$, i.e. there is a commutative diagramme
\begin{equation*}
    \begin{tikzcd}[ar symbol/.style = {draw=none,"#1" description,sloped},
  isomorphic/.style = {ar symbol={\cong}},
  equals/.style = {ar symbol={=}},
  ]
        1\arrow[r]\ar[equal]{d}  &   \gt_0\arrow[r]\ar[equal]{d} &   \gt\arrow[r]\arrow[d,"\sim" labl]   & \gamtz  \arrow[r]\ar[equal]{d} & 1\ar[equal]{d}\\
        1 \arrow[r]  &   \gt_0 \arrow[r]  &   \gt_0\times_{(\tau,c)}\gamtz \arrow[r]    &     \gamtz \arrow[r]  &   1
    \end{tikzcd}.
\end{equation*}

Let $\pt:\gt\to \gamtz$ be the natural surjection. Fix a subgroup $\Lambda\le \gamtz$, set $\gtl:=\pt^{-1}(\Lambda)$ and let $p:Y\to X$ be a connected $\Lambda$-bundle over $X$. Consider the subgroup $\wgam\le\Aut(Y)$ lifting $\eta(\Gamma)$. This contains $\Lambda=\gal(Y/X)$ as a normal subgroup, since $\Lambda$ is the kernel of the projection $\wgam\to\eta(\Gamma)$. Let 
\begin{equation*}
    \wgame:=\{(\gamma,\wga)\in\Gamma\times\wgam\suhthat \etag=p(\wga)\}.
\end{equation*}
This contains the subgroup $\ker\eta\times 1$, a copy of $\ker\eta$. Let $p_{\Gamma}:\wgame\to\Gamma$ be the projection on the first factor. The following commutative diagramme holds:
\begin{equation}\label{eq-extension-wgam}
    \begin{tikzcd}[
      ar symbol/.style = {draw=none,"#1" description,sloped},
      equals/.style = {ar symbol={=}},
      ]
        &   &   1\arrow[d]&   1\arrow[d]&   \\
        &   &   \Lambda\arrow[d]\ar[equal]{r}&   \Lambda\arrow[d]&   \\
        1\arrow[r]&  \ker\eta\ar[equal]{d}\arrow[r] &   \wgame\arrow[d,"p_{\Gamma}"]\arrow[r]&   \wgam\arrow[d,"p_{\eta(\Gamma)}"]\arrow[r]&   1\\
        1\arrow[r]&   \ker\eta\arrow[r]&   \Gamma\arrow[d]\arrow[r,"\eta"]&   \eta(\Gamma)\arrow[d]\arrow[r]&   1\\
        &   &   1&   1&   \\
    \end{tikzcd},
\end{equation}
whose rows and columns are exact.


We say that a map $\tilde\tau:\wgame\to\Aut(G)$ such that $\tilde\tau(\wgam)$ preserves $Z(\gt_0)$ is a \textbf{$c$-twisted homomorphism} if it satisfies
\begin{equation}\label{eq-c-twisted-hom}
    \tilde\tau_{\gamma,\wga}\tilde\tau_{\gamma',\wga'}=\Int_{c(\gamma,\gamma')}\tilde\tau_{\gamma\gamma',\wga\wga'}
\end{equation}
for every $(\gamma,\wga)$ and $(\gamma',\wga')\in\wgame$, where $c\in Z^2_{\tilde\tau}(\wgam,Z(\gt_0))$.
Equivalently, according to the notation of Section \ref{section-twisted-G-Gamma-action}, it determines a $c$-twisted left $(G,\Gamma)$-action on $G$, where $G$ acts on itself by conjugation. Similarly, there is a notion of $c$-twisted homomorphism $\wgam\to \Aut(\gt)$. Denote by $\Hom_c(A,B)$ the set of $c$-twisted homomorphisms from $A$ to $B$ and let $\Aut(G)^{\gtl}$ be the set of automorphisms which preserve $\gtl$. There is a restriction map $r_{\ker\eta}:\Hom_c(\wgame,\Aut(G))\to\Hom(\ker\eta,\Aut(G))$. The image consists of homomorphisms because $c$ is trivial on $\ker\eta$ because of (\ref{eq-extension-wgam}). Let $\Hom_{\theta,c}(\wgame,\Aut(G)):=\Hom_c(\wgame,\Aut(G)^{\gtl})\cap r_{\ker\eta}^{-1}(\theta)$. 

Given an element $\tilde\tau\in\Hom_{\theta,c}(\wgame,\Aut(G))$, the restriction of $\tilde\tau(\gamma,\hat\gamma)$ to $\gtl$ for each $(\gamma,\hat\gamma)\in\wgame$ provides a map $\tilde\tau\vert_{\gtl}:\wgame\to \Aut({\gtl})$. Moreover,  $\tilde\tau\vert_{\gtl}$ factors through the projection $\wgame\to\wgam$. Indeed, given $\tilde\tau\in \Hom_{\theta,c}(\wgame,\Aut(G))$, $(\gamma,\wga)\in\wgame$, $\gamma'\in\ker\eta$ and $g\in\gtl$,
\begin{equation*}
    \tilde\tau_{(\gamma\gamma',\wga)}(g)=\Int_{c(\wga,1)}^{-1}\tilde\tau_{(\gamma,\wga)}\tilde\tau_{\gamma'}(g)=\tilde\tau_{(\gamma,\wga)}\theta_{\gamma'}(g)=\tilde\tau_{(\gamma,\wga)}(g).
\end{equation*} 
Therefore, there is a map $r_{\gtl}:\Hom_{\theta,c}(\wgame,\Aut(G))\to\Hom_c(\wgam,\Aut(\gtl))$.

Note that any automorphism of $\gtl$ preserves the connected component $\gt_0$ of $\gtl$, hence there is also a map $r_{\gt_0}:\Hom_c(\wgam,\Aut(\gtl))\to\Hom(\wgam,\Aut(\gt_0))$, where the fact that $Z(\gt_0)$ acts trivially on $\gt_0$ by conjugation implies that the image consists of (honest) homomorphisms. 
Conversely, given a homomorphism $\tau:\wgam\to\Aut(\gt_0)$ and a 2-cocycle $c\in Z^2_{\tau}(\wgam,Z(\gt_0))$ such that $\gt_0\times_{(\tau,c)}\Lambda$ and $\gtl$ are isomorphic as extensions of $\gt_0$, there is an extension of $\tau$ to a map $e_{\tau,c}:\wgam\to\Aut(\gtl)$ given by
\begin{equation*}
    \begin{tikzcd}[ar symbol/.style = {draw=none,"#1" description},
    isomorphic/.style = {ar symbol={\cong}},
      ]
        \wgam\arrow[r]\arrow[rrrr,bend right=15,"e_{\tau,c}", swap] & \gt_0\times_{(\tau,c)}\wgam\arrow[r] & \Int(\gt_0\times_{(\tau,c)}\wgam)\arrow[r] & \Aut(\gt_0\times_{(\tau,c)}\Lambda)\ar[isomorphic]{r} &  \Aut(\gtl),
    \end{tikzcd}
\end{equation*}
where the first map sends $\gamma\in\Gamma$ to $(1,\gamma)\in\gt_0\times_{(\tau,c)}\wgam$ and the existence of the third map follows from the fact that $\Lambda$ is normal in $\wgam$. The map $\etc$ is a $c$-twisted homomorphism, since $\Int_{(1,\gamma)}\Int_{(1,\gamma')}=\Int_{c(\gamma,\gamma')}\Int_{(1,\gamma\gamma')}$ by definition of the group multiplication on $\gt_0\times_{(\tau,c)}\wgam$.

In summary, there is the following diagramme:
\begin{equation}\label{eq-restriction-extension maps}
    \begin{tikzcd}
        \Hom(\wgame,\Out(G)) &
        \Hom_{\theta,c}(\wgame,\Aut(G)) \arrow[l,"q_*"]\arrow[r,"r_{\gtl}"] & \Hom_c(\wgam,\Aut(\gtl))\arrow[d,"r_{\gt_0}"]\\
        \Hom(\Gamma,\Out(G))\arrow[u,"p_{\Gamma}^*"]&       &   \Hom(\wgam,\Aut(\gt_0))
    \end{tikzcd},
 \end{equation}
where $q_*$ and $p_{\Gamma}^*$ are induced by the natural surjection $q:\Aut(G)\to\Out(G)$ and the projection $p_{\Gamma}:\wgame\to\Gamma$, respectively. For each homomorphism $\tau:\wgam\to\Aut(\gt_0)$ and each 2-cocycle $c\in Z^2_{\tau}(\wgam,Z(\gt_0))$ as above we set 
\begin{align*}
    &\Hom_{\theta,\tau,c}(\wgame,\Aut(G)):=r_{\gt_{\Lambda}}^{-1}(e_{\tau,c})\subset\Hom_{\theta,c}(\wgame,\Aut(G))\andd\\
    &\Hom_{\theta,\tau,c}(\wgame,\Out(G)):=q_*(\Hom_{\theta,\tau,c}(\wgame,\Aut(G))).
\end{align*}

The natural surjection $\Aut(G)^{\gtl}\to \Aut(G)^{\gtl}/\Int^G(\gt_0)$ restricts to a natural homomorphism $\gt_0\to\Aut(G)^{\gtl}$ given by conjugation, whose image is a normal subgroup of $\Aut(G)^{\gtl}$ since $\gt_0$ is preserved by every element of $\Aut(G)^{\gtl}$. Consider the map
\begin{equation*}
    p_{\theta,\tau,c}:\Hom_{\theta,\tau,c}(\wgame,\Aut(G))\to \Hom(\wgame,\Aut(G)^{\gtl}/\Int^G(\gt_0)).
\end{equation*}
This is well defined because the image of $c$ is in $\gt_0$, and so $c$-twisted homomorphisms are sent to (honest) homomorphisms.
We set 
\begin{equation*}
    \Hom_{\theta,\otau}(\wgame,\Aut(G)^{\gtl}/\Int^G(\gt_0)):=p_{\theta,\tau,c}(\Hom_{\theta,\tau,c}(\wgame,\Aut(G))).
\end{equation*}
Alternatively, $\Hom_{\theta,\otau}(\wgame,\Aut(G)^{\gtl}/\Int^G(\gt_0))$ consists of the homomorphisms $\nu:\wgame\to\Aut(G)^{\gtl}/\Int^G(\gt_0)$ such that the restriction of $\nu\vert_{\Lambda}$ to $\gt_0$ is equal to $\otau$ and $\nu\vert_{\ker\eta}$ is equal to the composition 
\begin{equation*}
    \ker\eta\xrightarrow{\theta}\Aut(G)^{\gt_0}\to \Aut(G)^{\gtl}/\Int^G(\gt_0).
\end{equation*}


Now let $\mu:\Gamma\to\C^*$ be a character, and let $\tilde\tau:\wgame\to\Aut(G)$ be a $c$-twisted homomorphism in $\Hom_{\theta,\tau,c}(\wgame,\Aut(G))$ preserving $\liegm$. Then there exists an associated left $(\tau,c)$-twisted $(\gt_0,\wgame)$-action on $\liegm$, given by
\begin{equation}\label{eq-action-tildetau}
    \rhotm:=\pg^*(\omu)\tilde\tau:\wgame\to\GL(\liegm);\,(\gamma,\wga)\mapsto (v\mapsto\omu_{\gamma}\tilde\tau_{\gamma,\wga}(v)).
\end{equation}
Moreover, since by definition of $\liegm$ and the fact that $r_{\ker\eta}(\tilde\tau)=\theta$ the action is trivial on $\ker\eta$, and $c$ only depends on the coset in $\wgam$, this factors through a $(\tau,c)$-twisted $(\gt_0,\wgam)$-action on $\liegm$ which we also call $\rhotm$. 

\begin{remark}\label{remark-not-preserve-liegm}
    In Sections \ref{section-fixed-points-general-alpha-trivial} and \ref{section-general-theorem} we will encounter $c$-twisted homomorphisms $\tilde\tau$ living in the set $\Hom_{\theta,\tau,c}(\wgame,\Aut(G))$ which do not necessarily preserve $\liegm$. However, we still obtain a well defined linear homomorphism 
\begin{equation*}
    \rhotm:\wgam\to\Hom(\liegm,\lie g).
\end{equation*}
Given a $\gt_0$-bundle $F$ over $Y$, this induces a map $H^0(Y,F(\liegm)\otimes K_Y)\to H^0(Y,F(\lie g)\otimes K_Y)$. Hence there is a notion of \textbf{$(\tau,c,\rhotm)$-twisted $\wgam$-equivariant $(\gt_0,\liegm)$-Higgs pair} over $Y$, given precisely as in Definition \ref{def-twisted-equivariant-higgs-pairs} by regarding the images of the Higgs field under the action of $\wgam$ as sections of $F(\lie g)\otimes K_Y$ and imposing that they actually live in $F(\liegm)\otimes K_Y$ and are equal to the Higgs field.
\end{remark}

\subsection{Fixed points for trivial \texorpdfstring{$\alpha$}{alpha}}\label{section-fixed-points-general-alpha-trivial}

Fix a homomorphism
$$\Gamma\to\Aut(X)\times\Out(G)\times\C^*;\,\gamma\mapsto(\fg,\ag,\mug).$$ Throughout this section we keep the notation of Section \ref{section-group-theory}. What follows may be regarded as a refinement of the results of Section \ref{section-alpha-trivial}.

\begin{proposition}\label{prop-fixed-points-reduction-alpha-trivial-higgs}
Let $\Lambda\le \gamtz$ be a subgroup. Consider a lift $\theta:\ker\eta\to\Aut(G)$ of $a\vert_{\ker\eta}$, a connected $\Lambda$-bundle $p:Y\to X$ and the group $\wgam\le\Aut(Y)$ fitting in (\ref{eq-extension-wgam}). Let $\tau:\wgam\to\Aut(\gt_0)$ be a homomorphism and $c\in\zzgtw$ a 2-cocycle such that there is an isomorphism of extensions $\gtl:=\pt^{-1}(\Lambda)\cong \gt_0\times_{(\tau,c)}\Lambda$. Assume that $\pg^*a\in\outc$ --- defined as in Section \ref{section-group-theory}
--- and pick $\tilde\tau\in \homtc$ such that $q_*(\tilde\tau)=\pg^*a$.

Let $(F,\psi)$ be a $(\tau,c,\rhotm)$-twisted $\wgam$-equivariant $(\gt_0,\liegm)$-Higgs pair over $Y$. Then $(F,\psi)$ can be regarded as a $(\gtl,\liegm)$-Higgs pair over $X$ via Proposition \ref{prop-twisted-equivariant-bundles-one-to-one}, and its extension of structure group $(E,\phi)$ to $G$ is isomorphic to $(E,\phi)\cdot\gamma$ for each $\gamma\in\Gamma$.
\end{proposition}

\begin{proof}
Throughout the proof we denote by $\tau:\wgam\to\Aut(\gt)$ the extension $e_{\tau,c}$ of the given $\tau:\wgam\to\Aut(\gt_0)$ by abuse of notation. We have to show that $E\cong E\cdot\gamma$ for every $\gamma\in\Gamma$. Pick $\wga\in\wgam$ such that $\eta(\gamma)=p(\wga)$ --- in other words, $(\gamma,\wga)\in\wgame$. Consider the automorphism of $F$ given by $\wga$. We want to know how it interacts with the $\gtl$-action on $F$. Consider the twisted product $\gt_0\times_{(\tau,c)}\wgam$. For each $e\in F$ and $(g,\lambda)\in\gt_0\times_{(\tau,c)}\Lambda\cong \gtl$,
\begin{align*}
    (e(g,\lambda))\bullet\wga
    &=((eg)\bullet\lambda)\bullet\wga
    \\&=((((eg)\bullet\wga)\bullet\wga^{-1})\bullet\lambda)\bullet\wga
    \\&=((eg)\bullet\wga)\bullet(1,\wga^{-1})(1,\lambda)(1,\wga)
    \\&=
    (e\bullet\wga)\tau^{-1}_{\wga}(g)\tau^{-1}_{\wga}(1,\lambda)
    \\&=(e\bullet\wga)\tau^{-1}_{\wga}(g,\lambda),
\end{align*}
therefore $\wga$ induces an isomorphism 
\begin{equation*}
    \hg:F\xrightarrow{\sim}\etag^{*-1}\tau_{\wga}(F);\,e\mapsto \etag^{*-1}e\bullet \wga,
\end{equation*}
where $F$ is regarded as a $\gtl$-bundle over $X$. 

Now let $\tilde\tau\in q_*^{-1}(\pg^*a)\subset\homtc$. Using the extension $\tilde\tau_{\gamma,\wga}\in\Aut(G)$ of $\tau$ we may extend the above isomorphism to $E$, thus obtaining $E\cong \etag^*\tilde\tau_{\gamma,\wga}(F)$. Since $\tilde\tau_{\gamma,\wga}$ is a lift of $a_{\gamma}$, we obtain an isomorphism $\hg:E\xrightarrow{\sim} E\cdot\gamma$ for each $\gamma\in\Gamma$.
It is left to show that it respects the higgs field.  
Let $(e,v)\otimes k$ be a local expression for $\psi$, where $e\in F$, $v\in\liegm$ and $k\in K_Y$. By $\wgam$-invariance of $\psi$,
\begin{equation*}
    \psi=\psi\cdot\wga=(e\bullet\wga,\omu_{\gamma}\tilde\tau_{\gamma,\wga}v)\otimes\etag^*k=\omu_{\gamma}\etag^{*-1}\tilde\tau_{\gamma,\wga}(\hg(\psi)).
\end{equation*}
We thus conclude that $\hg(\psi)=\mug\etag^{*}\tilde\tau_{\gamma,\wga}^{-1}(\psi)$ and so $\hg$ sends $\phi$ to $\phi\cdot\gamma$ as required.
\end{proof}

\begin{proposition}\label{prop-simple-fixed-points-oscar-ramanan-alpha-trivial-principal}
Let $E$ be a simple $G$-bundle over $X$ isomorphic to $E\cdot\gamma$ for every $\gamma\in\Gamma$. Then there exist a lift $\theta$ of $a\vert_{\ker\eta}$ and a reduction of structure group $F$ of $E$ to $\gt$ satisfying the following: let $p:Y\to X$ be a connected component of the $\gamt$-bundle $F/\gt_0\to X$ with Galois group $\Lambda:=\gal(Y/X)$, and $\wgam$ the subgroup of $\Aut(Y)$ lifting $\eta(\Gamma)$. Then there is a homomorphism $\otau:\wgam\to\Out(\gt_0)$ such that, for every lift $\tau:\wgam\to\Aut(\gt_0)$, 
we can find a 2-cocycle $c\in Z^2_{\tau}(\wgam,Z(\gt_0))$ such that the following statements hold.
\begin{enumerate}
    \item There is an isomorphism $\gtl:=\pt^{-1}(\Lambda)\cong\gt_0\times_{(\tau,c)}\Lambda$ of extensions of $\gt_0$.
    \item $\pg^*a\in\outc$ --- defined as in Section \ref{section-group-theory}.
    \item The tautological reduction of $p^*F$ to $\gt_0$ is a $(\tau,c)$-twisted $\wgam$-equivariant $\gt_0$-bundle.
\end{enumerate}
\end{proposition}

\begin{proof}
Let $E$ be a simple $G$-bundle isomorphic to $E\cdot\gamma$ for each $\gamma\in\Gamma$. According to Proposition \ref{prop-simple-fixed-points} there is a lift 
$$\theta:\ker\eta\to\Aut(G)$$
of $a\vert_{\ker\eta}$ and a reduction $F$ with structure group
$\gt$. Another lift of $a\vert_{\ker\eta}$ gives such a reduction if and only if it is in the orbit of $\theta$ under the conjugation action of $\Int(G)$.
On the other hand, 
there exists a homomorphism
$$\wt:\Gamma\to\Aut(G)$$
lifting $a$. For each $\gamma\in \ker\eta$ we set $\tg=\Int_{\sg}\wtg$, where 
$$\Int_s:\ker\eta\to\Int(G);\,\gamma\mapsto \Int_{\sg}$$
is an element of $Z^1_{\wt}(\ker\eta,\Int(G))$ by Lemma \ref{lemma-lifts-vs-non-abelian-cohomology}. Fix an element $\beta\in\Gamma$. We claim that $\wtb(\gt)=G^{\theta'}$, where \begin{equation}\label{eq-def-theta'}
    \theta':\ker\eta \to\Aut(G);\,\gamma\mapsto\tg':=\Int_{\wtb(s_{\beta\gamma\beta^{-1}})}\wtg.
\end{equation}

Indeed, for every $\gamma\in\ker\eta$ and $g\in\gt$,
\begin{align*}
    \Int_{\wtb(s_{\beta\gamma\beta^{-1}})}\wtg\wtb(g)&=
    \Int_{\wtb(s_{\beta\gamma\beta^{-1}})}\wtb\wt_{\beta\gamma\beta^{-1}}(g)\\&=
    \wtb\Int_{s_{\beta\gamma\beta^{-1}}}\wt_{\beta\gamma\beta^{-1}}(g)\\&=\wtb\theta_{\beta\gamma\beta^{-1}}(g)\\&=\wtb(g),
\end{align*}
where the last equation follows from $\ker\eta$ being a normal subgroup of $\Gamma$, so the inclusion $\wtb(\gt)\subseteq G^{\theta'}$ follows. The same reasoning applied to $\beta$ instead of $\beta^{-1}$ and $\theta'$ instead of $\theta$ gives $\wt_{\beta}(\gtt)\subseteq\gt$, so $\gtt\subseteq\wtb(\gt)$ also follows. Moreover, we can see that $\theta'$ is a homomorphism, or equivalently, by Lemma \ref{lemma-lifts-vs-non-abelian-cohomology}, that the map 
$$\ker\eta\to\Int(G);\gamma\mapsto\Int_{\wtb(s_{\beta\gamma\beta^{-1}})}$$ is an element of $Z^1_{\wt}(\ker\eta,\Int(G))$. Indeed, for every $\gamma$ and $\gamma'\in\ker\eta$,
$$\Int_{\wtb(s_{\beta\gamma\beta^{-1}})\wtg(\wtb(s_{\beta\gamma'\beta^{-1}}))}
=\Int_{\wtb(s_{\beta\gamma\beta^{-1}}\wt_{\beta\gamma\beta^{-1}}(s_{\beta\gamma'\beta^{-1}}))}
=\Int_{\wtb(s_{\beta\gamma\gamma'\beta^{-1}})},$$
where the last equality follows from the fact that $\Int_s\in Z^1_{\wt}(\ker\eta,\Int(G))$.

For each $\gamma\in\Gamma$ there is an isomorphism
$$\hg:E\to \etag^{*-1}\wtg E.$$
By simplicity of $E$, this isomorphism is unique up to multiplication by an element of $Z$.
Consider the subbundle $F':=\fb^{*}\hb(F)$ of $E$. For every $e\in E$ and $g\in G$ the equation
$\fb^{*}\hb(eg)=\fb^{*}\hb(e)\wtb(g)$ holds,
and so by the previous paragraph $F'$ is a reduction of $E$ with structure group $\wtb(\gt)=\gtt$. 
We have thus shown that $F'$ is a reduction of structure group of $E$ to $\gtt$. Since $\theta'$ is a lift of $a\vert_{\ker\eta}$, it must be a conjugate of $\theta$ by an element of $\Int(G)$. Moreover, $F't_{\beta}=F$ for some element $t_{\beta}\in G$ by Proposition \ref{prop-simple-fixed-points}. Note that, given another element $t'_{\beta}\in G$ such that $F't'_{\beta}=F$, there must exist $g\in\gt$ such that $t_{\beta}=t'_{\beta}g$.

Now let $p:Y\to X$ be the étale cover of $X$ defined by $F/\gt_0$. For simplicity we assume that it is connected, so that $\Lambda=\gamtz$ and $\gtl=\gt$. The general case follows using the same argument after taking a connected component of $Y$ and the corresponding monodromy group $\Lambda$. The map $t$ determines a homomorphism from $\Gamma$ to the group of automorphisms $\Aut(Y)$ of $Y$, where $\Aut(Y)$ is equipped with its trasposed multiplication, as follows: for each $\gamma\in\Gamma$ take an element of $Y=F/\gt_0$, choose a representative in $F$, apply the map
$\etag^{*}\hg(\cdot)t_{\gamma}:F\to F$
and take the projection on $F/\gt_0$. Different choices of $t$ provide different choices of the map $\Gamma\to\Aut(Y)$ differing by elements of $\gamtz=\gal(Y/X)$. Consider the subgroup $\wgamm_Y\le\Aut(Y)$ consisting of the lifts of $\eta(\Gamma)$ to $Y$. By the previous discussion this is the subgroup generated by $\gamtz$ and the image of $\Gamma$ under any of the maps $\Gamma\to\Aut(Y)$ that we have defined. Recall that we are regarding the $\wgamm_Y$-action on $Y$ as a right action --- in fact $\gamtz$ acts naturally on the right, since its action is a principal bundle action. The rest of the proof is committed to defining a suitable action of $\wgamm_Y$ on the tautological reduction of $p^*F$ to $\gt_0$, which we also call $F$ because they have the same total space.

Let $\wgame\subseteq\Gamma\times\wgamm_Y$ be the subset of pairs $(\gamma,\wga)$ such that $\eta(\Gamma)=p(\wga)$. First note that for each pair $(\gamma,\wga)\in \wgame$ we can define an automorphism 
$\etag^{*}\hg(\cdot)t_{\gamma,\wga}$ of $F$ lifting $\wga$, where $t_{\gamma,\wga}\in G$ is chosen suitably. This is an automorphism of the total space of $F$ as a complex variety, and in general it does not preserve the $\gt_0$-action. Note that another such choice $t'_{\gamma,\wga}$ is equal to $t_{\gamma,\wga}g$ for some $g\in\gt_0$. We claim that the map $$\wt\Int_t:\wgame\to\Aut(G);\,(\gamma,\wga)\mapsto \wtg\Int_{t_{\gamma,\wga}},$$ 
induces a homomorphism $\wgamm_Y\to\Out(\gt_0).$
Indeed, for every $(\gamma,\wga)\in \wgame$, $e\in F$ and $g\in\gt_0$ one has
\begin{align*}
    \etag^{*}\hg(eg)t_{\gamma,\wga}=\etag^{*}\hg(e)t_{\gamma,\wga}(\Int^{-1}_{t_{\gamma,\wga}}\wtg^{-1}(g))=\etag^{*}\hg(e)t_{\gamma,\wga}((\wtg\Int_{t_{\gamma,\wga}})^{-1}(g))
\end{align*}
and so, since both $\etag^{*}\hg(e)t_{\gamma,\wga}$ and $\etag^{*}\hg(eg)t_{\gamma,\wga}$ lie in $F$, the automorphism $\wtg\Int_{t_{\gamma,\wga}}$ of $G$ must preserve $\gt_0$. Moreover, the image of the restriction $\wtg\Int_{t_{\gamma,\wga}}\vert_{\gt_0}$ in $\Out(\gt_0)$ does not depend on $\gamma$: given another $\gamma'\in\Gamma$ such that $\eta(\gamma')=p(\wga)$, the element $\gamma^{-1}\gamma'$ must lie in $\ker\eta$. By simplicity of $(E,\phi)$, the morphism
$$h_{\gamma^{-1}\gamma'}(\cdot)s_{\gamma^{-1}\gamma'}:(E,\phi)\to\eta_{\gamma^{-1}\gamma'}^{*-1} \theta_{\gamma^{-1}\gamma'}(E,\mu_{\gamma^{-1}\gamma'}\phi)=\theta_{\gamma^{-1}\gamma'}(E,\mu_{\gamma^{-1}\gamma'}\phi)$$
must be equal to the isomorphism induced by the identity on $(F,\psi)$ up to an element of $Z$ --- see Proposition \ref{prop-reduction}. Hence the restrictions of $\etag^{*}\hg(\cdot)t_{\gamma,\wga}$ and $\eta_{\gamma'}^{*}h_{\gamma'}(\cdot)t_{\gamma',\wga}$ to $F$ differ by an element of $G$ and, since they both preserve $F$ and lift $\wga$, this element must actually be in $\gt_0$. The compatibility of the $\gt_0$-actions then implies that $\wtg\Int_{t_{\gamma,\wga}}$ and $\wt_{\gamma'}\Int_{t_{\gamma',\wga}}$ must differ by an element of $\Int^G(\gt_0)$, as required. 

Therefore, we have obtained a map $\wgamm_Y\to\Out(\gt_0)$. To prove the claim it is left to show that it is a homomorphism. For every $(\gamma,\wga)$ and $(\gamma',\wga')\in \wgame$, one has
\begin{align*}
    (\eta_{\gamma\gamma'}^{*}h_{\gamma\gamma'}(F))t_{\gamma\gamma',\wga\wga'}
    &=F
    \\&=\eta_{\gamma}^{*} h_{\gamma}(F)t_{\gamma,\wga}
    \\&=\eta_{\gamma'}^{*} h_{\gamma'}(\eta_{\gamma}^{*-1} h_{\gamma}(F)t_{\gamma,\wga})t_{\gamma',\wga'}\\&=
(\eta_{\gamma\gamma'}^{*}h_{\gamma\gamma'}(F))\wt_{\gamma'}^{-1}(t_{\gamma,\wga})t_{\gamma',\wga'}z
\end{align*}
for some element $z\in Z$. Hence there exists $g\in\gt_0$ such that $\wt_{\gamma'}(t_{\gamma,\wga})t_{\gamma',\wga'}z=t_{\gamma\gamma',\wga\wga'}g,$
and so
$$\wtg\Int_{t_{\gamma,\wga}}\wt_{\gamma'}\Int_{t_{\gamma',\wga'}}=\wt_{\gamma\gamma'}\Int_{\wt_{\gamma'}^{-1}(t_{\gamma,\wga})t_{\gamma',\wga'}}=\wt_{\gamma\gamma'}\Int_{t_{\gamma\gamma',\wga\wga'}}\Int_g,$$
as required.

We have thus obtained a homomorphism 
$\otau:\wgamm_Y\to\Out(\gt_0).$
Lift $\otau$ to a homomorphism $\tau:\wgamm_Y\to\Aut(\gt_0)$. In other words, we may rechoose the map $t:\wgame\to G$ so that
\begin{equation}\label{eq-def-t}
    \tau:=\wt\Int_t\vert_{\gt_0}
\end{equation}
is a homomorphism factoring though $\wgam$. More precisely, for each coset $\wga\in\wgamm_Y\cong\wgame/\ker\eta$ we may choose a representative $(\gamma,\wga)\in\wgame$ and define $t_{\gamma,\wga}$ so that $\tau_{\gamma}:=\wtg\Int_{t_{\gamma,\wga}}\vert_{\gt_0}$ and then, for each $\gamma'\in\gamma\ker\eta$, take the unique $t_{\gamma',\wga}$ that fits into the equation 
\begin{equation}\label{eq-c-doesnt-depend-on-ker}
    \etag^{*}\hg(\cdot)t_{\gamma,\wga}\vert_{F}=\eta_{\gamma'}^{*}h_{\gamma'}(\cdot)t_{\gamma',\wga}\vert_{F},
\end{equation}
which exists because of simplicity of $E$ and because $\ker\eta$ acts trivially on $\gt_0$ via $\theta$. Let $c:\wgamm_Y\times\wgamm_Y\to Z(\gt_0)$ be the (unique) map satisfying
\begin{equation}\label{eq-def-c}
    \eta_{\gamma'}^{*}h_{\gamma'}(\etag^{*}\hg(\cdot )t_{\gamma,\wga}) t_{\gamma',\wga'}\vert_{F}= h_{\gamma\gamma'}(\cdot c(\wga,\wga'))t_{\gamma\gamma',\wga\wga'}\vert_{F}
\end{equation}
for each $\wga,\wga'\in\wgamm_Y$ and any $\gamma,\gamma'\in\Gamma$ such that $\eta(\gamma)=p(\wga)$ and $\eta(\gamma')=p(\wga')$. Note that $c$ is well defined: both sides of (\ref{eq-def-c}) are independent of the choice of $\gamma$ and $\gamma'$ by (\ref{eq-c-doesnt-depend-on-ker}), and they are both $\gt_0$-equivariant with respect to the $\gt_0$ action given by $\wt_{\gamma}\Int_{t_{\gamma}}\wt_{\gamma'}\Int_{t_{\gamma'}}=\wt_{\gamma\gamma'}\Int_{t_{\gamma\gamma'}}$, hence they differ by an element of $\gt_0$ commuting with $\gt_0$ --- in other words, an element in $Z(\gt_0)$. Because of associativity of the composition of homomorphisms of $\gt_0$ bundles, $c\in Z^2_{\tau}(\wgamm_Y,Z(\gt_0))$ is a 2-cocycle. 

Define a map
\begin{equation}\label{eq-def-twisted-equivariant-actiom-F}
     F\times\wgamm_Y\to F;\,(e,\wga)\mapsto e\bullet\wga:= \etag^{*-1}\hg(e)t_{\gamma,\wga},\,\eta(\gamma)=p(\wga).
\end{equation}
This is independent of the choice of $\gamma$ by (\ref{eq-c-doesnt-depend-on-ker}) and it descends to the action of $\wgamm_Y$ by the construction of $t$. Moreover, it is a $(\tau,c)$-twisted $\wgamm_Y$-equivariant action on $F$ by (\ref{eq-def-t}) and (\ref{eq-def-c}). 

This finishes the proof of (3). Statements (1) and (2) follow by construction, so we are done.
\end{proof}

\begin{proposition}\label{prop-simple-fixed-points-oscar-ramanan-alpha-trivial-higgs}
Let $(E,\phi)$ be a simple $G$-Higgs bundle over $X$ isomorphic to $(E,\phi)\cdot\gamma$ for every $\gamma\in\Gamma$. Then there exist a lift $\theta$ of $a\vert_{\ker\eta}$ and a reduction of structure group $(F,\psi)$ of $(E,\phi)$ to $\gtl:=\pt^{-1}(\Lambda)$ satisfying the following: let $p:Y\to X$ be the étale cover associated to the $\Lambda$-bundle $F/\gt_0\to X$ and $\wgam$ the subgroup of $\Aut(Y)$ lifting $\eta(\Gamma)$. Then there is a homomorphism $\otau:\wgam\to\Out(\gt_0)$ such that, for every lift $\tau:\wgam\to\Aut(\gt_0)$, 
we can find a 2-cocycle $c\in Z^2_{\tau}(\wgam,Z(\gt_0))$ such that the following statements hold.
\begin{enumerate}
    \item There is an isomorphism $\gtl\cong\gt_0\times_{(\tau,c)}\Lambda$ of extensions of $\gt_0$.
    \item $\pg^*a\in\outc$, defined as in Section \ref{section-group-theory}.
    \item There exists $\tilde\tau\in\homtc$ 
    such that $q_*(\tilde\tau)=\pg^*a$ and the tautological reduction of $p^*(F,\psi)$ to $\gt_0$ is a $(\tau,c,\rhotm)$-twisted $\wgam$-equivariant $(\gt_0,\liegm)$-Higgs pair.
\end{enumerate}
\end{proposition}

\begin{proof}
The argument builds on top of the proof of Proposition \ref{prop-simple-fixed-points-oscar-ramanan-alpha-trivial-principal}. However, we need to check several facts related to the Higgs field.

First recall that at the beginning of the proof we used Proposition \ref{prop-simple-fixed-points} to obtain a lift $\theta:\ker\eta\to\Aut(G)$ of $a\vert_{\ker\eta}$ and a reduction of structure group $F$ of $E$ to $\gt$. In this case we obtain a reduction of structure group $(F,\psi)$ of $(E,\phi)$ to $(\gt,\liegm)$. Take any lift $\wt:\Gamma\to\Aut(G)$ of $a$. By Lemma \ref{lemma-lifts-vs-non-abelian-cohomology} there is a 1-cocycle $\Int_s\in Z^1_{\wt}(\ker\eta,\Int(G))$ such that $\theta=\Int_s\wt$. By assumption there is an isomorphism 
$$\hg:(E,\phi)\to\etag^{*-1}\wtg(E,\omu_{\gamma}\phi)$$
for every $\gamma\in\Gamma$. In the proof of Proposition \ref{prop-simple-fixed-points-oscar-ramanan-alpha-trivial-principal} we showed that $F':=\etag^*\hg(F)$ is a reduction of $E$ to $G^{\theta'}$, where $\theta'$ is given by (\ref{eq-def-theta'}). It is left to show that there exists a section $\psi'\in H^0(X,F'(\lieg^{\theta'}_{\mu})\otimes K_X)$ such that the Higgs pair $(F',\psi')$ is a reduction of structure group of $(E,\phi)$ --- in other words, that the Higgs field $\phi$ actually lies in $H^0(X,F'(\lieg^{\theta'}_{\mu})\otimes K_X)$.

Because of the equation $\hb(\phi)=\mub^{-1}\fb^{*}\wt_{\beta}(\phi)$, we know that $\phi=\mub\wtb\fb^{*}\hb(\phi)\in H^0(X,E(\lieg)\otimes K).$ Given $x\in X$, $e\in F_x$, $k\in K_x$ and an element $v\in \liegm$ such that $\phi_x=(e,v)\otimes k$, the equation
$$\phi_{\eta_{\beta}(x)}=[(\fb^{*}\hb(e),\mub \wtb(v))\otimes \fb^{*}k]_{\fb(x)}$$ holds,
hence it is enough to show that $\wtb(\liegm)\subseteq\lieg^{\theta'}_{\mu}$ --- in fact, one may prove that they are equal. For every $\gamma\in\ker\eta$ and $v\in\liegm$, one has
\begin{align*}
    \ttg \wtb(v)
    &= \Ad_{\wtb(s_{\beta\gamma\beta^{-1}})}\wtg \wtb(v)\\&=\wtb \Ad_{s_{\beta\gamma\beta^{-1}}} \wt_{\beta\gamma\beta^{-1}} (v)\\&=\wtb \theta_{\beta\gamma\beta^{-1}} (v)\\&=
    \mu_{\beta\gamma\beta^{-1}} \wtb (v)\\&=\mug \wtb(v),
\end{align*}
where the last equality follows from the fact that $\C^*$ is abelian and $\mu$ is a homomorphism, as required. 

Next we defined an étale cover $Y:=F/\gt_0$, which we assume for simplicity that is connected, and a $(\tau,c)$-twisted $\wgam$-action on $F$, thought of as a $\gt_0$-bundle over $Y$. 
According to Section \ref{section-group-theory}, in order to define the action of $\wgamm_Y$ on $H^0(Y,F(\liegm)\otimes K_X)$ we need $\tau$ to be the restriction of a $c$-twisted homomorphism $\tilde{\tau}:\wgame\to\Aut(G)$ such that $q_*\tilde{\tau}=p_{\Gamma^*}a$ --- this is statement (2). We claim that $\tilde\tau:=\wt\Int_t:\wgame\to\Aut(G)$ is a $c$-twisted homomorphism. Indeed, there is a map
\begin{equation*}
     E\times\wgame\to E;\,(e,(\gamma,\wga))\mapsto e\bullet (\gamma,\wga):= \etag^{*}\hg(e)t_{\gamma,\wga}.
\end{equation*}
By simplicity of $E$,
$$eg_{(\gamma,\wga),(\gamma',\wga')}\bullet (\gamma,\wga)\bullet (\gamma',\wga')=e\bullet (\gamma\gamma',\wga\wga')$$
for each $e\in E$, $(\gamma,\wga)$ and $(\gamma',\wga')\in\wgame$ and some $g_{(\gamma,\wga),(\gamma',\wga')}\in G$ depending on $(\gamma,\wga)$ and $(\gamma',\wga')$ but not on $e$. But this action restricts to (\ref{eq-def-twisted-equivariant-actiom-F}) on $F$, hence in fact $g_{(\gamma,\wga),(\gamma',\wga')}=c(\wga,\wga')$. The claim follows by considering the $G$-action on $E$ and noting that $q_*(\tilde\tau)=\pg^*a$ is implied by the fact that $\tilde\theta$ lifts $a$. 

We have thus constructed a $c$-twisted homomorphism $\tilde\tau\in\homtc$ 
such that $q_*(\tilde\tau)=\pg^*a$. Moreover, $F$ is a $(\tau,c)$-twisted $\wgam$-equivariant $\gt_0$-bundle. To finish the proof of (3) we have to show that the Higgs field $\psi$ such that $(F,\psi)$ is a reduction of structure group of $(E,\phi)$ is $\wgam$-invariant with respect to the twisted equivariant action of $\wgam$ on $F$ and $\rhotm$ --- see Section \ref{section-group-theory} for the definition of the equivariant action. Let $(e,v)\otimes k$ be a local expression for $\psi$, where $e\in F$, $v\in\liegm$ and $k\in K_Y$, the canonical bundle of $Y$. Keeping the notation of the proof of Proposition \ref{prop-simple-fixed-points-oscar-ramanan-alpha-trivial-principal},
\begin{align*}
    ((e,v)\otimes k)\cdot\wga
    &=(e\bullet\wga,\rhotm^{-1}(v))\otimes \etag^{*}k
    \\&=(e\bullet\wga,\mug \wtaug^{-1}(v))\otimes \etag^{*}k
    \\&=(\etag^{*}\hg(e)t_{\gamma,\wga},\mug\Ad_{t_{\gamma,\wga}}^{-1}\wtg(v))\otimes \etag^{*}k
    \\&=
    (\etag^{*}\hg(e),\mug \wtg(v))\otimes \etag^{*-1}k
    \\&=(e,v)\otimes k
\end{align*}
for every $(\gamma,\wga)\in\wgame$, where the last equation follows from the definition of $h$. 

This finishes the proof of (3). Statements (1) and (2) follow by construction, so we are done.
\end{proof}

\begin{proposition}\label{prop-prym-narasimhan-ramanan-alpha-trivial-higgs}
Consider a lift $\theta:\ker\eta\to\Aut(G)$ of $a\vert_{\ker\eta}$, a subgroup $\Lambda\le\gamtz$, a connected étale cover $p:Y\to X$ with Galois group $\Lambda$ and the subgroup $\wgam\le\Aut(Y)$ lifting $\eta(\Gamma)\le\Aut(X)$. Let $\tau$ be a homomorphism $\tau:\wgam\to\Aut(\gt_0)$ and $c\in\zzgtw$ a 2-cocycle such that there is an isomorphism of extensions $\gtl=\pt^{-1}(\Lambda)\cong \gt_0\times_{(\tau,c)}\Lambda$. Assume that $\pg^*a\in\outc$ --- defined as in Section \ref{section-group-theory} --- and pick $\tilde\tau\in \homtc$ such that $q_*(\tilde\tau)=\pg^*a$.

There is a morphism
\begin{equation}\label{eq-PNR+extension-morphism-general}
    \cM(Y,\gt_0,\wgam,\tau,c,\liegm,\rhotm)\to \cM(X,G),
\end{equation}
given by Proposition \ref{prop-prym-narasimhan-ramanan} and extension of structure group. 
\end{proposition}
\begin{proof}
Let $(E,\phi)$ be an equivariantly polystable $(\tau,c,\rhotm)$-twisted $\wgam$-equivariant $(\gt_0,\liegm)$-Higgs pair over $Y$. By Theorem \ref{EH1-equivariant} there exists a $\wgam$-invariant metric satisfying (\ref{hitchin-equation-pairs}). In particular it is $\gamtz$-invariant, hence by Theorem \ref{EH1-equivariant} the underlying twisted $\Lambda$-equivariant Higgs pair is equivariantly polystable and, by Propositions \ref{prop-polystability-extension-of-structure-group-non-connected} and \ref{prop-prym-narasimhan-ramanan}, the $(\gt,\liegm)$-Higgs pair over $X$ given by Proposition \ref{prop-twisted-equivariant-bundles-one-to-one} and extension of structure group is polystable. By Proposition \ref{prop-polystability-extension-structure-group} the $G$-Higgs bundle given by extension of structure group to $G$ is polystable as required.
\end{proof}



Denote by $\wcM(Y,\gt_0,\wgam,\tau,c,\liegm,\rhotm)$ the image of (\ref{eq-PNR+extension-morphism-general}).

\begin{theorem}\label{th-prym-narasimhan-ramanan-alpha-trivial-higgs}
Fix $\theta\in\Hom(\ker\eta,\Aut(G))$ lifting $a$. The following relations between moduli spaces hold:
\begin{enumerate}
    \item $$\bigcup_{[\beta],Y,[\tau^{\beta\theta}],[c^{\beta\theta}],[\tilde\tau]}\wcM(Y,G^{\beta\theta}_0,\wgam,\tau^{\beta\theta},c^{\beta\theta},\lie g^{\beta\theta}_{\mu},\rhotm)
    \subset\cM(X,G)^{\Gamma}. $$
    
    \item $$\cM_*(X,G)^{\Gamma}\subset
    \bigcup_{[\beta],Y,[\tau^{\beta\theta}],[c^{\beta\theta}],[\tilde\tau]}\wcM(Y,G^{\beta\theta}_0,\wgam,\tau^{\beta\theta},c^{\beta\theta},\lie g^{\beta\theta}_{\mu},\rhotm).$$
    
\end{enumerate}
Here $[\beta]$ runs through $H^1_{\theta}(\ker\eta,\Int(G))$, $Y$ runs over connected étale covers of $X$ with Galois group
$\Lambda\le\widehat{\Gamma}^{\beta\theta}$, $[\tau^{\beta\theta}]\in \Hom(\wgam,\Out(G^{\beta\theta}_0))$ and $[c^{\beta\theta}]\in H^2_{\tau^{\beta\theta}}(\wgam,Z(G^{\beta\theta}_0))$ are such that $\pg^*a\in \Hom_{\beta\theta,\tau^{\beta\theta},c^{\beta\theta}}(\wgame,\Out(G))$, and their restrictions to $\Lambda$ satisfy $G^{\beta\theta}_0\times_{(\tau^{\beta\theta},c^{\beta\theta)}}\Lambda\cong \gtl$ as extensions. Moreover, for each choice of $[\beta]$, $[\tau^{\beta\theta}]$ and $[c^{\beta\theta}]$, $\tilde\tau$ is an element of the set $ \Hom_{\beta\theta,\tau^{\beta\theta},c^{\beta\theta}}(\wgame,\Aut(G))$, $q_*\tilde\tau=\pg^*a$ and
\begin{equation*}
[\tilde\tau]:=p_{\beta\theta,\tau^{\beta\theta},c^{\beta\theta}}(\tilde\tau)\in\Hom_{\theta,\otau}(\wgame,\Aut(G)^{\gtl}/\Int^G(\gt_0)).
\end{equation*}
To make sense of all the notation, see Section \ref{section-group-theory}.
\end{theorem}

\begin{proof}
Follows from Propositions \ref{prop-fixed-points-reduction-alpha-trivial-higgs}, \ref{prop-simple-fixed-points-oscar-ramanan-alpha-trivial-higgs} and \ref{prop-prym-narasimhan-ramanan-alpha-trivial-higgs}.
\end{proof}

\subsection{The general theorem}\label{section-general-theorem}

Now we tackle the general case. Let $X$ be a compact Riemann surface, $G$ a connected semisimple complex Lie group with centre $Z$ and $\Gamma$ a finite subgroup of $H^1(X,Z)\rtimes(\Aut(X)\times\Out(G))\times\C^*$. By Section \ref{section-action} there is a right action of $\Gamma$ on $\mdl(X,G)$. The projections on each factor provide homomorphisms $\eta:\Gamma\to\Aut(X)$, $a:\Gamma\to\Out(G)$ and $\mu:\Gamma\to\C^*$, together with a 1-cocycle $\alpha\in Z^1_{a,\eta}(\Gamma,H^1(X,Z))$, where the action of $\Gamma$ on $H^1(X,Z)$ is determined by $a$ --- via extension of structure group --- and $\eta$ --- via pullback. In other words, this is a map $\alpha:\Gamma\to H^1(X,Z)$ satisfying
\begin{equation}\label{eq-1-cocycle-eta-a}
    \alpha_{\gamma\gamma'}=\alg\etag^{*-1}\ag(\alpha_{\gamma'})
\end{equation}
for each $\gamma$ and $\gamma'\in\Gamma$.
The restriction $\alpha\vert_{\ker\eta}$ is a 1-cocycle in $Z^1_{a}(\ker\eta,H^1(X,Z))\cong H^1(X,Z^{1}_a(\ker\eta,Z))$, hence any of its connected components provides an étale cover $X_{\alpha,\eta}\to X$ with Galois group $\Gamma_{\alpha,\eta}\le Z^{1}_a(\ker\eta,Z)$. 

Now pick a lift $\theta:\ker\eta\to\Aut(G)$ of $a\vert_{\ker\eta}$. Let $p:Y\to X$ be a connected component of a $\gamtt$-bundle in $\qqt^{-1}(\alpha\vert_{\ker\eta})$ --- see (\ref{eq-def-qt}) ---, and set $\olambda:=\gal(Y/X)\le \gamtt$. Consider the subgroup $\halpha\le H^1(X,Z)$ generated by the image of $\alpha$. This is finite because both $\Gamma$ and $Z$ are finite. Its image $p^*\halpha\le H^1(Y,Z)$ via pullback is also a finite subgroup. It determines an element of $\Hom(\halpha,H^1(Y,Z))\cong H^1(Y,\Hom(\halpha,Z))$, which in turn defines a connected étale cover $p_{Y_{\alpha}}:Y_{\alpha}\xrightarrow{p_{\halpha}} Y\to X$. Moreover, $Y_{\alpha}$ is equipped with a projection $Y_{\alpha}\to\halpha\in H^1(X,\Hom(\halpha,Z))$, where $\halpha$ is regarded as an étale cover of $X$ via the isomorphism $H^1(X,\Hom(\halpha,Z))\cong\Hom(\halpha,H^1(X,Z))$. Let $\Lambda:=\gal(Y_{\alpha}/X)$, denote by $\wgam\le\Aut(Y_{\alpha})$ the group of automorphisms of $Y_{\alpha}$ lifting $\eta(\Gamma)\le\Aut(X)$ and let $\wgame:=\{(\gamma,\wga)\in\Gamma\times\wgam\suhthat \eta(\gamma)=p(\wga)\}$. The commutative diagramme (\ref{eq-extension-wgam}) still holds and it has exact rows and columns. There is also a diagramme (\ref{eq-restriction-extension maps}), with the same notation. We may also define $\homtc$. 

Given $\taut:\wgam\to\Aut(\gt_0)$ and $\ct\in Z^2_{\taut}(\wgam,Z(\gt_0))$ whose restrictions to $\Lambda$ factor through $\olambda$ and satisfy $\gtll\cong \gt_0\times_{(\taut,\ct)}\olambda$, together with an element $\tilde\tau\in\homtc$, there is a $(\taut,\ct)$-twisted $ \wgam$-right action on $\liegm$ defined by (\ref{eq-action-tildetau}), which we denote by $\rhotm:\wgam\to\Hom(\liegm,\lieg)$.

By Proposition \ref{prop-prym-narasimhan-ramanan}, there is a morphism
\begin{equation*}
    \cM(Y_{\alpha},G^{\theta}_0,\wgam,\tau^{\theta},c^{\theta},\lie g^{\theta}_{\mu},\rhotm).
    \to\cM(X,G).
\end{equation*}
Let $\wcM(Y_{\alpha},G^{\theta}_0,\wgam,\tau^{\theta},c^{\theta},\lie g^{\theta}_{\mu},\rhotm)
    $ be the image of this morphism.

\begin{theorem}\label{th-prym-narasimhan-ramanan-general}
Fix $\theta\in\Hom(\ker\eta,\Aut(G))$ lifting $a$. The following relations between moduli spaces hold:
\begin{enumerate}
    \item $$\bigcup_{[\beta],Y,[\tau^{\beta\theta}],[c^{\beta\theta}],[\tilde\tau]}\wcM(Y_{\alpha},G^{\beta\theta}_0,\wgam,\tau^{\beta\theta},c^{\beta\theta},\tilde\tau,\lie g^{\beta\theta}_{\mu},\rhotm)
    \subset\cM(X,G)^{\Gamma}. $$
    
    \item $$\cM_*(X,G)^{\Gamma}\subset
    \bigcup_{[\beta],Y,[\tau^{\beta\theta}],[c^{\beta\theta}],[\tilde\tau]}\wcM(Y_{\alpha},G^{\beta\theta}_0,\wgam,\tau^{\beta\theta},c^{\beta\theta},\tilde\tau,\lie g^{\beta\theta}_{\mu},\rhotm).$$
    
\end{enumerate}
Here $[\beta]$ runs through $H^1_{\theta}(\ker\eta,\Int(G))$, $Y$ runs over étale covers of $X_{\alpha,\eta}$ which are connected components of $\gamtt$-bundles in $\qt^{-1}(\alpha\vert_{\ker\eta})$, $[\tau^{\beta\theta}]\in \Hom(\wgam,\Out(G^{\beta\theta}_0))$ and $[c^{\beta\theta}]\in H^2_{\tau^{\beta\theta}}(\wgam,Z(G^{\beta\theta}_0))$ are such that their restrictions to $\Lambda$ factor through $\olambda$ and satisfy 
\begin{equation}\label{iso-Gbetatheta-twisted-product}
G^{\beta\theta}_0\times_{(\tau^{\beta\theta},c^{\beta\theta})}\olambda\cong \gtll.
\end{equation}
Moreover, for each choice of $[\beta]$, $[\tau^{\beta\theta}]$ and $[c^{\beta\theta}]$, $\tilde\tau$ is an element of $ \Hom_{\beta\theta,\tau^{\beta\theta},c^{\beta\theta}}(\wgame,\Aut(G))$, $q_*\tilde\tau=\pg^*a$ and
\begin{equation*}
[\tilde\tau]:=p_{\beta\theta,\tau^{\beta\theta},c^{\beta\theta}}(\tilde\tau)\in\Hom_{\theta,\otau}(\wgame,\Aut(G)^{\gtl}/\Int^G(\gt_0)).
\end{equation*} Finally, if $t:\olambda\to\gtll$ is the map sending $\lambda\in\olambda$ to $(1,\lambda)\in G^{\beta\theta}_0\times_{(\tau^{\beta\theta},c^{\beta\theta})}\olambda\cong \gtll$, then
\begin{equation}\label{eq-2-cocycle-vs-alpha}
    \tilde\tau_{\hat\gamma\lambda^{-1}\hat\gamma^{-1}}(c(\hat\gamma,\lambda)c(\hat\gamma\lambda,\hat\gamma^{-1}))\tilde\tau_{\gamma,\hat\gamma}(t_{\lambda})^{-1}c(\hat\gamma,\hat\gamma^{-1})t_{\hat\gamma\lambda\hat\gamma^{-1}}=\langle\alpha_{\gamma},\lambda\rangle,
\end{equation}
for every $\gamma\in \Gamma$ and $\lambda\in\Lambda$, where $(\gamma,\hat\gamma)\in\wgame$ and $\langle\alpha_{\gamma},\lambda\rangle$ is the evaluation of $p_{Y}\circ\lambda\in\Hom(\halpha,Z)$ at $\alpha_{\gamma}$.
\end{theorem}
\begin{remark}
    As in Remark \ref{remark-isotropy}, we may refine Theorem \ref{th-prym-narasimhan-ramanan-general} by decomposing the different components according to their local types.
\end{remark}

\begin{proof}[Sketch of the proof]
(1) follows by an argument analogous to the proof of Proposition \ref{prop-simple-fixed-points-oscar-ramanan-alpha-trivial-higgs}, except for (\ref{eq-2-cocycle-vs-alpha}). To prove (2) let $(E,\phi)$ be a simple $G$-bundle fixed by the $\Gamma$-action. Using Proposition \ref{prop-simple-fixed-points} we obtain a reduction of structure group $(F,\psi)$ to a $(G_{\beta\theta},\lie g^{\beta\theta}_{\mu})$-Higgs pair for some lift $\beta\theta$ of $a\vert_{\ker\eta}$. Let $Y$ be the étale cover of $X$ given by a connected component of the $\gamtt$-bundle $F(\gs/\gt_0)$. Using the equivalence between Higgs bundles on $X$ and $\Lambda$-equivariant Higgs bundles on $Y_{\alpha}$, together with the fact that $p_{Y_{\alpha}}^*\halpha$ is trivial on $Y_{\alpha}$, we obtain a twisted equivariant $\wgam$-action on the reduction $(F',\psi')$ of $p_{Y_{\alpha}}^*(F,\psi)$ to $\gt_0$, as in the proof of Proposition \ref{prop-simple-fixed-points-oscar-ramanan-alpha-trivial-principal}. 

We obtain (\ref{eq-2-cocycle-vs-alpha}) by considering each $p_{Y_{\alpha}}^*\alg$ as the trivial bundle equipped with its natural $\Lambda$-equivariant action twisted by the natural pairing of $\alg$ with the elements of the Galois group $\Lambda\subset \Hom(\halpha,Z)$, given by evaluation. Indeed, the pullback $p_{Y_{\alpha}}^*(E,\phi)$ features a natural $\Lambda$-equivariant action, and the twisted $\wgam$-equivariant action on $(F',\psi')$ induces a $\Lambda$-equivariant isomorphism 
\begin{equation*}
    h_{\gamma,\hat\gamma}:p_{Y_{\alpha}}^*(E,\phi)\xrightarrow{\sim} \hat\gamma^*\tilde\taug^{-1}p_{Y_{\alpha}}^*(E\otimes \alg,\mug \phi)
\end{equation*}
for each $(\gamma,\hat\gamma)\in\wgame$. Using the isomorphism $p_{Y_{\alpha}}^*\alg\cong\oo$, the trivial line bundle over $Y_{\alpha}$, these must fit in a commutative diagramme
\begin{equation}\label{eq-commutative-diagram-general-th-alpha-equivariant}
    \begin{tikzcd}
E\arrow[r,"h_{\gamma,\hat\gamma}"]\arrow[d,"\lambdaa"]  &   E\arrow[d,"\lambdaaa"]\\
        E\arrow[r,"h_{\gamma,\hat\gamma}"]   &   E,
    \end{tikzcd}
\end{equation}
where $*$ denotes the $\Lambda$-equivariant action. By \cite[Proposition 4.7]{BGGM},
\begin{equation*}
    e*\lambda=e\bullet\lambda t_{\lambda}^{-1}
\end{equation*}
for every $e\in F'\subset p_{Y_{\alpha}}^*E$, where $\bullet$ denotes the twisted $\Lambda$-equivariant action on $F'$. Therefore, for each $e\in F'$ one has
\begin{equation}\label{eq-general-th-lambda}
    e*(\hat\gamma\lambda\hat\gamma^{-1})\langle\alg,\lambda\rangle=e\bullet(\hat\gamma\lambda\hat\gamma^{-1}) t_{\hat\gamma\lambda\hat\gamma^{-1}}^{-1}\langle\alg,\lambda\rangle
\end{equation}
and
\begin{align}
    ((e\bullet\hat\gamma)*\lambda)(\bullet\hat\gamma)^{-1}&= 
    (((e\bullet\hat\gamma)\bullet\lambda t_{\lambda}^{-1})\bullet\hat\gamma^{-1})c(\hat\gamma,\hat\gamma^{-1})
    \label{eq-general-th-gammalambda}\\&=
    (((e\bullet\hat\gamma)\bullet\lambda)\bullet\hat\gamma^{-1})\tilde\tau_{\gamma,\hat\gamma}(t_{\lambda})^{-1}c(\hat\gamma,\hat\gamma^{-1})\nonumber\\&=
    ((ec(\hat\gamma,\lambda)c(\hat\gamma\lambda,\hat\gamma^{-1})))\bullet(\hat\gamma\lambda\hat\gamma^{-1})\tilde\tau_{\gamma,\hat\gamma}(t_{\lambda})^{-1}c(\hat\gamma,\hat\gamma^{-1})\nonumber\\&=
    (e\bullet(\hat\gamma\lambda\hat\gamma^{-1}))\tilde\tau_{\hat\gamma\lambda^{-1}\hat\gamma^{-1}}(c(\hat\gamma,\lambda)c(\hat\gamma\lambda,\hat\gamma^{-1}))\tilde\tau_{\gamma,\hat\gamma}(t_{\lambda})^{-1}c(\hat\gamma,\hat\gamma^{-1}).\nonumber
\end{align}
By (\ref{eq-commutative-diagram-general-th-alpha-equivariant}) the first terms of (\ref{eq-general-th-lambda}) and (\ref{eq-general-th-gammalambda}) are equal, hence comparing the last terms (\ref{eq-2-cocycle-vs-alpha}) follows.
\end{proof}

Let $M(X,G)$ be the moduli space of $G$-bundles over $X$. Let $M(Y_{\alpha},G^{\theta}_0,\wgam,\tau^{\theta},c^{\theta})$ be the moduli space of $(\tau^{\theta},c^{\theta})$-twisted $\wgam$-equivariant $G^{\theta}_0$-bundles over $Y_{\alpha}$. By Proposition \ref{prop-prym-narasimhan-ramanan}, there is a morphism
\begin{equation*}
M(Y_{\alpha},G^{\theta}_0,\wgam,\tau^{\theta},c^{\theta}).
    \to M(X,G).
\end{equation*}
Let $\widetilde M(Y_{\alpha},G^{\theta}_0,\wgam,\tau^{\theta},c^{\theta})\subset M(X,G)
    $ be the image of this morphism.

\begin{corollary}\label{cor-prym-narasimhan-ramanan-general-principal}
Fix $\theta\in\Hom(\ker\eta,\Aut(G))$ lifting $a$. The following relations between moduli spaces hold:
\begin{enumerate}
    \item $$\bigcup_{[\beta],Y_{\alpha},[\tau^{\beta\theta}],[c^{\beta\theta}]}\widetilde M(Y,G^{\beta\theta}_0,\wgam,\tau^{\beta\theta},c^{\beta\theta})
    \subset M(X,G)^{\Gamma}. $$
    
    \item $$M_*(X,G)^{\Gamma}\subset
    \bigcup_{[\beta],Y_{\alpha},[\tau^{\beta\theta}],[c^{\beta\theta}]}\widetilde M(Y,G^{\beta\theta}_0,\wgam,\tau^{\beta\theta},c^{\beta\theta}).$$
    
\end{enumerate}
Here $[\beta]$ runs through $H^1_{\theta}(\ker\eta,\Int(G))$, $Y$ runs over étale covers of $X$ whose Galois group is a subgroup
$\Lambda\le\widehat{\Gamma}^{\beta\theta}$, $[\tau^{\beta\theta}]\in \Hom(\wgam,\Out(G^{\beta\theta}_0))$ and $[c^{\beta\theta}]\in H^2_{\tau^{\beta\theta}}(\wgam,Z(G^{\beta\theta}_0))$ are such that $\pg^*a\in \Hom_{\beta\theta,\tau^{\beta\theta},c^{\beta\theta}}(\wgame,\Out(G))$ and their restrictions to $\Lambda$ factor through $\olambda$ and satisfy 
\begin{equation*}
    G^{\beta\theta}_0\times_{(\tau^{\beta\theta},c^{\beta\theta})}\olambda\cong \gtll.
\end{equation*}
Moreover, if $t:\olambda\to\gtll$ is the map sending $\lambda\in\olambda$ to $(1,\lambda)\in G^{\beta\theta}_0\times_{(\tau^{\beta\theta},c^{\beta\theta})}\olambda\cong \gtll$, then (\ref{eq-2-cocycle-vs-alpha}) holds.
\end{corollary}

If $\mu$ is trivial, we may use the notation of Sections \ref{section-twisted-equivariant-higgs-pairs-and-hitchin-equations} and \ref{section-character-variety-alpha-trivial} to achieve a generalization of Theorems \ref{main-rep} and \ref{th-prym-narasimhan-ramanan-character-varieties}.

\begin{theorem}\label{th-prym-narasimhan-ramanan-character-general}
Let $\mu:\Gamma\to\C^*$ be trivial. Fix $\theta\in\Hom(\ker\eta,\Aut(G))$ lifting $a$. The following relations between character varieties hold:
\begin{enumerate}
    \item $$\bigcup_{[\beta],Y,[\tau^{\beta\theta}],[c^{\beta\theta}]}\wcalR(Y,G^{\beta\theta}_0,\wgam,\tau^{\beta\theta},c^{\beta\theta})
    \subset\calR(X,G)^{\Gamma}. $$
    
    \item $$\calR_{\irr}(X,G)^{\Gamma}\subset
    \bigcup_{[\beta],Y,[\tau^{\beta\theta}],[c^{\beta\theta}]}\wcalR(Y,G^{\beta\theta}_0,\wgam,\tau^{\beta\theta},c^{\beta\theta}).$$
    
\end{enumerate}
Here $[\beta]$ runs through $H^1_{\theta}(\Gamma,\Int(G))$, $Y$ runs over étale covers of $X$ which are connected components of $\gamtt$-bundles in $\qt^{-1}(\alpha\vert_{\ker\eta})$, and $[\tau^{\beta\theta}]\in \Hom(\wgam,\Out(G^{\beta\theta}_0))$ and $[c^{\beta\theta}]\in H^2_{\tau^{\beta\theta}}(\wgam,Z(G^{\beta\theta}_0))$ are such that $\pg^*a\in \Hom_{\beta\theta,\tau^{\beta\theta},c^{\beta\theta}}(\wgame,\Out(G))$ and their restrictions to $\Lambda$ factor through $\olambda$ and satisfy 
(\ref{iso-Gbetatheta-twisted-product}).
Moreover, if $t:\olambda\to\gtll$ is the map sending $\lambda\in\olambda$ to $(1,\lambda)\in G^{\beta\theta}_0\times_{(\tau^{\beta\theta},c^{\beta\theta})}\olambda\cong \gtll$, then (\ref{eq-2-cocycle-vs-alpha}) holds.
\end{theorem}
\begin{proof}
Use Theorems \ref{equivariant-nahc} and \ref{th-prym-narasimhan-ramanan-general}.
\end{proof}

\providecommand{\bysame}{\leavevmode\hbox to3em{\hrulefill}\thinspace}

\end{document}